\numberwithin{equation}{section}
\begin{document}
	
\newif\ifdraft

	\title{The quintic NLS on perturbations of $\mf{R}^3$}
	\author{Casey Jao}
	\maketitle
	
	\begin{abstract}
		Consider the defocusing quintic nonlinear Schr\"{o}dinger equation on 
		$\mf{R}^3$ 
		with initial data in the energy space. This problem is 
		``energy-critical" in view 
		of a certain scale-invariance, which is a main 
		source of difficulty 
		in the 
		analysis of this equation. It is a nontrivial fact that all 
		finite-energy solutions 
		scatter to linear solutions. We show that this remains true 
		under small compact deformations of the Euclidean metric. Our main new 
		ingredient is a long-time microlocal weak dispersive estimate that 
		accounts for the refocusing of geodesics.
	\end{abstract}
	
	\section{Introduction}
	
	Let $g$ be a smooth Riemannian metric on $\mf{R}^3$. We consider the 
	large-data 
	Cauchy 
	problem for the defocusing nonlinear Schr\"{o}dinger equation
	\begin{align}
	\label{ch4:e:main_eq}
	i\partial_t u + \Delta_g u = |u|^4 u, \ u(0, x) = u_0(x) \in \dot{H}^1,
	\end{align}
	where $\Delta_g$
	is the 
	Laplace-Beltrami operator. More precise 
	assumptions on 
	$g$ shall be prescribed shortly. 
	
	This equation admits a conserved energy
	\begin{align}
	\label{ch4:e:energy}
	E(u) = \int_{\mf{R}^3} \frac{1}{2} g^{jk}\partial_j u \overline{\partial_j 
	u} + 
	\frac{1}{6} |u|^6 \, dg,
	\end{align}
	where $dg = \sqrt{|g|} dx$ is the Riemannian measure. 
	
	If $g = \delta$ is 
	the standard Euclidean metric, one recovers the well-known
	energy-critical NLS. A key feature of that 
	equation---and 
	a major source of analytical headaches---is the scaling symmetry 
	$u_\lambda(t, x) = 
	\lambda^{-1/2} u(\lambda^{-2}t, \lambda^{-1}x)$. As the energy is also 
	invariant under this rescaling, conservation of energy alone does not rule 
	out 
	the possibility that some solutions may concentrate at a point and blow 
	up in finite time.  It is a difficult theorem
	that all finite-energy
	solutions to that equation  
	scatter~\cite{bourgain_nls_radial_gwp,ckstt}.
	
	Although the exact 
	scaling 
	symmetry no longer holds for general $g$, it
	reemerges at small length scales in the sense that solutions highly 
	concentrated 
	near a 
	point $x_0$ will evolve, for short times, approximately as though $g$ were 
	the constant 
	metric $g(x_0)$. Despite this parallel, however, 
	equation~\eqref{ch4:e:main_eq} 
	is not a simple 
	perturbation of the Euclidean energy-critical equation. Indeed, disturbing 
	the 
	highest order 
	terms may destroy fundamental linear
	smoothing and decay estimates. This breakdown is linked to 
	the geometry of the 
	geodesic flow.
	

	For a general metric $g$, the linear local smoothing estimate $L^2 \to 
	L^2 
	H^{1/2}_{loc}$ is known to fail in the presence of trapping~\cite{Doi1996}. 
	Also, on a curved background, multiple geodesics emanating from a point may 
	converge at 
	another point. Linear solutions exhibit weaker decay with such 
	refocusing; in particular, by the parametrix construction of 
	Hassell-Wunsch~\cite{HassellWunsch2005},   
	the Euclidean dispersive estimate
	\begin{align}
		\label{ch4:e:intro_disp_est}
	\|e^{it\Delta} \|_{L^1(\mf{R}^d) \to L^\infty(\mf{R}^d)} 
	\lesssim |t|^{-d/2}
	\end{align}
	necessarily fails whenever the 
	metric admits conjugate points. In general one can only 
	recover a 
	frequency-localized version which holds at most for times
	inversely proportional to frequency; see~\cite{BurqGerardTzvetkov2004}. The 
	time 
	window stops the flow well before refocusing of 
	geodesics can occur. 
	
	While trapping does not occur if $g$ is sufficiently close to flat, 
	arbitrarily
	small perturbations of the flat metric may cause rays to refocus. 
	Thus~\eqref{ch4:e:intro_disp_est} typically fails on curved backgrounds. This has 
	substantial 
	implications for both the linear and nonlinear analysis.
	
	The standard abstract approach to linear Strichartz estimates combines the 
	dispersive estimate with a $TT^*$ argument~\cite{keel-tao}. This method is 
	not directly applicable where the dispersive estimate is not available.
	Nonetheless, 
	lossless Strichartz inequalities have been obtained for curved 
	backgrounds, starting with the influential work of Staffilani and 
	Tataru~\cite{StaffilaniTataru2002} and generalized substantially since
	\cite{RodnianskiTao2007,HassellTaoWunsch2006,Tataru2008,BoucletTzvetkov2008,MarzuolaMetcalfeTataru2008}.
	The basic strategy in these papers is to exploit microlocal versions of the 
	dispersive estimate through suitable parametrices and to control the errors 
	using 
	local smoothing, which holds in greater generality compared to the 
	dispersive 
	estimate.
	
	Linear dispersion also plays a key role 
	in the study of nonlinear solutions, in particular, when trying to 
	control
	highly concentrated nonlinear profiles that arise as potential obstructions 
	to global 
	existence, and for proving the decoupling of nonlinear profiles. There are by now several examples of such an analysis; see for 
	example~\cite{IonescuPausaderStaffilani2012}, 
	\cite{kvz_exterior_convex_obstacle}, or 
	\cite{me_quadratic_potential}. We briefly 
	recall the main idea from the latter reference, which discusses the 
	energy-critical
	NLS with the Schr\"{o}dinger operator for a quantum harmonic oscillator. By 
	the Mehler formula, the linear propagator obeys the dispersive estimate 
	locally in time.
	
	Suppose $u_n$ is 
	a sequence of solutions to the defocusing quintic harmonic 
	oscillator on $\mf{R}^3$ 
	with initial 
	data $u_n(0) = \lambda_n^{-1/2} \phi(\lambda_n^{-1} \cdot)$ for some 
	$\lambda_n \to 0$ and some compactly supported $\phi$. For short times 
	(more 
	precisely, when $|t| \le T \lambda_n^2$ for any $T > 0$), the harmonic 
	oscillator solution $u_n$ perceives the potential as essentially 
	constant 
	and is well-approximated by the solution $\tilde{u}_n$ to the 
	Euclidean 
	energy-critical equation with the same initial data. Using as a black box 
	the theorem that Euclidean solutions exist globally and scatter, one 
	deduces 
	via stability theory that $u_n$ is well-behaved for $t \le O(\lambda_n^2)$.
	
	For $t\ge T \lambda_n^2$, the (local in time) dispersive 
	estimate for the harmonic oscillator and the scattering of Euclidean 
	solutions 
	ensure that for large $T$ and small $\lambda_n$, the 
	nonlinearity $|u_n|^4 u_n$ is a negligible 
	perturbation of the linear harmonic oscillator. That is, for such $t$, 
	$u_n$ 
	evolves 
	essentially according to the linear flow applied to $u(T\lambda_n^2)$, 
	which is 
	perfectly well behaved. Thus, linear decay allows one to control 
	concentrated nonlinear solutions for times when the Euclidean 
	approximation no longer holds.
	
	We investigate the situation where $g$ 
	coincides with the flat metric outside the unit ball and all geodesics 
	escape to infinity. This is the simplest nontrivial generalization of the 
	Euclidean metric and is a natural counterpart to the scenario considered 
	recently by Killip, Visan, and Zhang~\cite{kvz_exterior_convex_obstacle}, 
	who 
	proved scattering for the analogue of equation~\eqref{ch4:e:main_eq} in the 
	exterior of a hard convex obstacle. We prove
	
	\begin{thm}
		\label{ch4:t:main_thm}
		Let $g$ be a smooth, nontrapping metric on $\mf{R}^3$ which coincides 
		with 
		the 
		Euclidean 
		metric outside the unit ball. For 
		any $u_0 \in \dot{H}^1$, there is a unique global solution 
		to~\eqref{ch4:e:main_eq}. Moreover, there exists $\varepsilon > 0$ such 
		that if $\|g-\delta\|_{C^3} 
		\le 
		\varepsilon$ then the 
		solutions obey global spacetime bounds
		\begin{align*}
		\|u\|_{L^{10}_{t,x}(\mf{R} \times \mf{R}^3)} \le C(E(u_0)).
		\end{align*}
	\end{thm}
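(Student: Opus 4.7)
The plan is to adapt the Kenig--Merle concentration-compactness and rigidity strategy, as implemented in~\cite{IonescuPausaderStaffilani2012, kvz_exterior_convex_obstacle}, to the variable-coefficient setting. The starting point is the linear theory for $e^{it\Delta_g}$: global-in-time Strichartz estimates, which under the nontrapping hypothesis are available from~\cite{StaffilaniTataru2002, Tataru2008, MarzuolaMetcalfeTataru2008}, together with the long-time microlocal weak dispersive estimate announced in the abstract. Standard contraction arguments then yield small-data scattering, local well-posedness in $\dot{H}^1$, a stability lemma comparing true solutions to approximate ones, and a conditional scattering criterion: finiteness of $\|u\|_{L^{10}_{t,x}}$ on every bounded existence interval implies global existence and scattering.

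To establish the uniform $L^{10}_{t,x}$ bound I would argue by contradiction, assuming the critical energy $E_c = \sup\{E : \text{all solutions with } E(u_0) \le E \text{ scatter}\}$ is finite. The heart of the matter is a nonlinear profile decomposition adapted to the geometry. Because $g - \delta$ is compactly supported, the linear profiles split into three types: those of bounded scale and bounded position, which scatter at subcritical energy by the induction hypothesis; those whose spatial center escapes to infinity, handled by translating and invoking the Euclidean quintic scattering theorem~\cite{bourgain_nls_radial_gwp, ckstt}; and, most delicately, those concentrating at scale $\lambda_n \to 0$ near a point $x_n$ inside the perturbation region. For the last case, on the short window $|t| \lesssim \lambda_n^2$ one freezes the metric at $g(x_n)$, rescales, and again reduces to the Euclidean theorem via the stability lemma. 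The challenge, mirroring~\cite{me_quadratic_potential}, is to continue the approximation for $|t| \gg \lambda_n^2$, after which the local Euclidean picture no longer applies and the true geodesic geometry takes over.

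Here the microlocal weak dispersive estimate enters in an essential way: it must show that, even when geodesics refocus and~\eqref{ch4:e:intro_disp_est} fails, wavepackets concentrated at scale $\lambda_n$ have dispersed by time $|t| \gtrsim \lambda_n^2$ enough that $|u_n|^4 u_n$ becomes a Strichartz-negligible perturbation of the linear flow $e^{it\Delta_g}$ thereafter. Granted this, the nonlinear profiles decouple in $L^{10}_{t,x}$, producing a contradiction except when a single surviving profile persists as a minimal blowup solution whose orbit is precompact in $\dot{H}^1$ modulo scale. Such a solution is ruled out by a localized Morawetz or interaction Morawetz inequality; the $C^3$-smallness of $g - \delta$ ensures that commutators between the Morawetz weight and the lower-order terms of $\Delta_g - \Delta$ produce only $O(\varepsilon)$ errors, absorbable by the main positive contribution. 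I expect the principal obstacle to be the proof of the long-time microlocal dispersive estimate itself: every other ingredient is an adaptation of by-now-standard energy-critical machinery, whereas this estimate must be delicate enough to track refocusing wavepackets uniformly in time, well beyond the short semiclassical window of validity established in~\cite{BurqGerardTzvetkov2004}.
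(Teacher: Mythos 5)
Your sketch correctly captures the paper's overall strategy (concentration compactness, Euclidean approximation of concentrated profiles on short windows, extinction lemma for long times, Morawetz for rigidity), but two points need repair to actually cover the full theorem statement.

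First, you assert the minimal blowup solution has orbit ``precompact in $\dot{H}^1$ modulo scale,'' importing intuition from the Euclidean scale-invariant case. Here this is both unnecessary and wrong: the compactly supported perturbation of the metric breaks scale and translation invariance, and the Palais--Smale argument forces the surviving profile's parameters $(\lambda_n, x_n)$ to converge to a finite positive scale and a finite center --- the other regimes ($\lambda_n\to 0$ or $\infty$, $|x_n|\to\infty$) are precisely the ones absorbed into the Euclidean scattering theorem and hence cannot carry critical energy. Consequently the orbit $\{u_c(t)\}$ is precompact in $\dot{H}^1$ \emph{without} modulation, and this is not cosmetic: the Morawetz functional integrates $|u|^6/\langle x\rangle$ over a ball of fixed radius, so if you genuinely had to quotient by a scale parameter tending to $0$ or $\infty$ the final contradiction in Section~\ref{ch4:s:scattering} would evaporate. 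Second, the theorem asserts global well-posedness for \emph{all} smooth nontrapping compact perturbations, while your argument funnels everything into the scattering dichotomy ($E_c<\infty$ plus Morawetz), which you only have under the $C^3$-smallness hypothesis. The paper handles this by running a parallel local-in-time induction on energy (the quantity $E_c'$ and Proposition~\ref{ch4:p:palais-smale_unit_time}): finite-time blowup would produce an almost-periodic solution on a bounded interval, which can then be continued past its maximal time by precompactness, contradicting maximality --- and this continuation step needs no smallness at all. As written, your sketch would only prove GWP under the smallness assumption, which is strictly weaker than what is claimed.
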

	 
	
	We use the Kenig-Merle concentration compactness and rigidity 
	framework~\cite{keraani_compactness_defect,kenig-merle_focusing_nls}, an 
	evolution of the earlier induction on energy method
	of~\cite{bourgain_nls_radial_gwp,ckstt}. In particular, we follow quite 
	closely the mold 
	of~\cite{kvz_exterior_convex_obstacle}.  
	Assuming that the scattering fails, we show that there must exist a 
	global-in-time
	blowup solution $u_c$ with minimal energy among all counterexamples to the 
	theorem. In view of this minimality, $u_c$ is also shown to be 
	almost-periodic 
	in the sense that $u(t)$ is trapped in some compact subset of $\dot{H}^1$. 
	In these arguments, the asymptotic behavior of the Euclidean NLS (which we 
	use as a black box) plays a key role. 
	However, under the smallness assumption on the metric, a Morawetz 
	inequality 
	will imply that solutions to equation~\eqref{ch4:e:main_eq} can never be 
	almost-periodic. 
	The smallness condition for scattering is probably 
	artificial, but we do not see at this time how to dispense with it.
	
	The heart of the matter is how to overcome the 
	reduced linear dispersion, which is the main obstacle to the 
	linear 
	and nonlinear profile decompositions. In Section~\ref{ch4:s:extinction}, we 
	prove a weak analogue of the usual dispersive estimate which nonetheless 
	suffices for our purposes. This is a long-time variant of 
	the 
	Burq-Gerard-Tzvetkov dispersion 
	estimate~\cite{BurqGerardTzvetkov2004} where we track the 
	microlocalized Schr\"{o}dinger flow on timescales that permit refocusing.
	
	
	Several recent works have exploited analogous weak dispersive estimates to 
	study energy-critical NLS in non-Euclidean geometries. The dispersion 
	results from different mechanisms in each case.
	En route to proving global wellposedness for the quintic NLS on $\mf{T}^3$, 
	Ionescu-Pausader introduce an ``extinction lemma" 
	\cite[Lemma~4.2]{IonescuPausader2012} to control concentrated nonlinear 
	profiles at times beyond the ``Euclidean window". Afterwards, 
	Pausader-Tzvetkov-Wang~\cite{PausaderTzvetkovWang2014} obtained the 
	analogous result on 
	$\mf{S}^3$, also relying crucially on an extinction lemma. The arguments 
	there lean on the special structure of the underlying manifold, 
	using for 
	instance 
	Fourier analysis on the torus (which, when combined with number theoretic 
	arguments, yield good bounds on the Schr\"{o}dinger propagator) or the 
	concentration properties of spherical harmonics.
	
	In a different vein, Killip-Visan-Zhang~\cite{kvz_exterior_convex_obstacle} 
	also obtained an extinction lemma 
	in 
	the exterior of a convex obstacle. The geodesics in that domain are 
	broken straight lines. To study the linear evolution 
	of a 
	profile 
	concentrating near the obstacle, they construct a gaussian wavepacket 
	parametrix and carefully study how the wavepackets reflect off the 
	obstacle. 
	The 
	essential 
	geometric fact in their favor is that due to the convexity assumption, any 
	two rays diverge after 
	reflecting off the obstacle.
	
	When the hard obstacle is replaced by a lens, refracted rays can certainly 
	refocus. However, some decay still occurs for a different reason. 
	By the uncertainty principle, a solution which is 
	initially highly 
	concentrated in space must be broadly distributed in momentum (frequency). 
	Thus, 
	it will spread out along geodesics as the slower parts lag 
	behind. This is an observation of D. Tataru communicated to the author by R. Killip and M. Visan. We 
	make this heuristic precise in Section~\ref{ch4:s:extinction} by 
	building a 
	wavepacket 
	parametrix and studying the geodesic flow.

	\textbf{Outline of paper}. Section~\ref{ch4:s:preliminaries} collects 
	technical points concerning Sobolev spaces and some linear theory. From the 
	linear estimates it is a standard matter to obtain the perturbative theory, 
	and 
	we merely state the main results. 
	
	Sections \ref{ch4:s:propagator_convergence} and 
	\ref{ch4:s:extinction} are most important to this paper. In 
	Section~\ref{ch4:s:propagator_convergence}, we study linear 
	solutions  in various situations
	where the 
	variation in the metric is intuitively negligible (for 
	instance, when considering initial data supported far from the origin). The 
	most 
	interesting case is when the solution is initially concentrated near 
	the origin, where it experiences nontrivial interaction with the curvature. 
	This section relies on an extinction lemma which 
	is the subject of Section~\ref{ch4:s:extinction}.

	With those considerations out of the way, we construct the linear 
	profile decomposition in Sections~\ref{ch4:s:lpd}. We also show in 
	Section~\ref{ch4:s:euclidean_profiles} that highly concentrated nonlinear 
	profiles are well-behaved; here the extinction lemma and the existing 
	scattering result for the Euclidean quintic equation both play a critical 
	role. 
	
	In Section~\ref{ch4:s:nlpd}, we use a nonlinear profile decomposition and 
	induction on energy to reduce Theorem~\ref{ch4:t:main_thm} to considering 
	almost-periodic minimal-energy counterexamples. This will already imply 
	global 
	wellposedness. Some care is needed to control the interaction between 
	linear 
	and nonlinear profiles; see the discussion preceding 
	Lemma~\ref{ch4:l:loc_smoothing}.
	
	Finally, in Section~\ref{ch4:s:scattering} we prove scattering 
	under the smallness assumption via a Bourgain-Morawetz inequality.
	
	In the appendix, we use the ideas from
Section~\ref{ch4:s:extinction} to give a small refinement to the 
Burq-Gerard-Tzvetkov semiclassical dispersive estimate which may be of independent interest.
	

	\textbf{Acknowledgments}. The author wishes to thank Rowan Killip and 
	Monica 
	Visan for many helpful conversations. This project was partially supported 
	by a UCLA Dissertation Year Fellowship. Part of this work 
	was 
	completed at MSRI during
	the Fall 2015 program on deterministic and stochastic PDE.

	\section{Preliminaries}
	\label{ch4:s:preliminaries}

	\subsection{Sobolev spaces}
	
	The energy space $\dot{H}^1 = \dot{H}^1(g)$ is defined as the completion of 
	test functions 
	$C^\infty_0(\mf{R}^3)$ 
	with respect to the 
	quadratic form
	\begin{align*}
	\|u\|_{ \dot{H}^1 }^2 = \int_{\mf{R}^3}|du|^2_g \, dg(x) = \int_{\mf{R}^3} 
	g^{jk} \partial_j u \overline{\partial_k u} \, dg (x).
	\end{align*}
As $\| u\|_{\dot{H}^1(g)} \sim \|
		(-\Delta_\delta)^{1/2}u\|_{L^2(dx)} = \| u\|_{\dot{H}^1(\delta)}$,
	where $\dot{H}^1(\delta)$ is the Euclidean homogeneous Sobolev space,
	the spaces $\dot{H}^1(g)$ and $\dot{H}^1(\delta)$ are equal as sets 
	and have equivalent inner products. We make this distinction because we 
	shall occasionally use the fact that 
	$\Delta_g$ is self-adjoint with respect to the inner product for 
	$\dot{H}^1(g)$; when the difference is irrelevant we just
	write $\dot{H}^1(\mf{R}^3)$ or just $\dot{H}^1$. 
	
	
	For $1 < p < \infty$, define the homogeneous Sobolev spaces $\dot{H}^{1, 
		p}(\delta)$ and $\dot{H}^{1,p}(g)$ as the completion of $C^\infty_0$ 
		under 
	the 
	norms
	\begin{align}
	\label{ch4:e:homogeneous_sob_space}
	\| u\|_{\dot{H}^{1, p}(\delta)} := \| (-\Delta_{\delta})^{1/2} u 
	\|_{L^p}, 
	\quad \| u 
	\|_{\dot{H}^{1, p}(g)} := \| (-\Delta_g)^{1/2} u\|_{L^p}.
	\end{align}
	As noted in the introduction, these two definitions coincide when $p = 2$. 
	Less trivially, these norms are equivalent for all $1 
	< p < \infty$. This is a consequence of the following boundedness result 
	for 
	the Riesz 
	transform $d (-\Delta_g)^{-1/2}$ on asymptotically Euclidean manifolds.
	
	
	\begin{prop}[{\cite[Remark 5.2]{Carron2006}}]
		\label{ch4:p:riesz_transform_bdd}
		Let $(M, g)$ be a Riemannian manifold such that for some $R > 0$, $M 
		\setminus 
		B(0, R) $ is Euclidean. Then the Riesz transform $d (-\Delta_g)^{-1/2}$ 
		is 
		bounded from $L^p(M)$ to $L^p(M; T^*M)$ for all $1 < p < \infty$.
	\end{prop}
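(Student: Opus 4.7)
My plan is to derive the Riesz transform bound by combining the $L^2$ identity with heat kernel Gaussian bounds on $(M,g)$, and then applying Calder\'on-Zygmund theory in the form adapted to complete manifolds.

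The $L^2$ estimate is automatic from spectral theory: for $u \in L^2$,
\begin{align*}
\|d(-\Delta_g)^{-1/2} u\|_{L^2}^2 = \langle (-\Delta_g)^{1/2} u, (-\Delta_g)^{-1/2} u \rangle = \|u\|_{L^2}^2,
\end{align*}
so the content lies in extending to $p \neq 2$. The strategy is to realize the Riesz transform as the heat-kernel integral
\begin{align*}
d(-\Delta_g)^{-1/2} f (x) = \frac{1}{\sqrt{\pi}} \int_0^\infty d_x e^{t\Delta_g}f(x) \,\frac{dt}{\sqrt{t}},
\end{align*}
so that the boundedness question reduces to pointwise size and regularity bounds on the Schwartz kernel $K(x,y) = \frac{1}{\sqrt{\pi}} \int_0^\infty d_x p_t^g(x,y) \frac{dt}{\sqrt{t}}$, where $p_t^g$ denotes the heat kernel of $\Delta_g$.

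The next step is to establish Gaussian estimates
\begin{align*}
|p_t^g(x,y)| \lesssim t^{-3/2} e^{-|x-y|^2/ct}, \qquad |d_x p_t^g(x,y)| \lesssim t^{-2} e^{-|x-y|^2/ct}
\end{align*}
uniformly on $M \times M \times (0, \infty)$. For $|x-y|$ and $t$ confined to the region where $g = \delta$, both bounds reduce to the classical Euclidean heat kernel. In the compact region where $g$ deviates from $\delta$, the manifold is quasi-isometric to Euclidean space and has bounded geometry, so the on-diagonal bound and pointwise Gaussian upper bound follow from the doubling property together with Nash-Moser / Li-Yau type parabolic regularity; a cut-off argument glues the two regimes. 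The derivative bound is the most delicate input and is what restricts the argument in general geometries, but the Euclidean ends plus the compactness of the perturbation allow it to be obtained by perturbing off the Euclidean case.

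With these kernel bounds in hand, routine integration in $t$ shows that $K(x,y)$ satisfies the standard Calder\'on-Zygmund size and gradient estimates $|K(x,y)| \lesssim |x-y|^{-3}$ and $|\nabla_{x,y} K(x,y)| \lesssim |x-y|^{-4}$. Combined with the $L^2$ identity above, the Calder\'on-Zygmund machinery on spaces of homogeneous type (applied to $M$ with the Riemannian measure, which is doubling because it differs from Lebesgue measure only on a compact set) yields weak-type $(1,1)$ and hence $L^p$ boundedness for $1 < p \le 2$; duality against the adjoint $-(-\Delta_g)^{-1/2}\operatorname{div}$, whose kernel satisfies the same estimates, extends the bound to $2 \le p < \infty$. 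The main obstacle I anticipate is the heat-kernel gradient bound, which on a general manifold can fail and which drives the known restrictions on $p$; the point of the Euclidean-end hypothesis is precisely to make this estimate available.
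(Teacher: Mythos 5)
The paper does not prove this proposition; it simply cites it from Carron (Carron--Coulhon--Hassell, \emph{Riesz transform and $L^p$-cohomology for manifolds with Euclidean ends}, Duke Math.\ J.\ 2006). So your argument cannot be compared to a proof in the text, but it can be assessed on its own terms, and there is a genuine gap exactly where you flag difficulty.

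The weak link is the uniform-in-time gradient Gaussian bound $|d_x p_t^g(x,y)| \lesssim t^{-(d+1)/2} e^{-|x-y|^2/ct}$ for \emph{all} $t>0$ (and the corresponding bound for $d_y d_x p_t^g$, which your Calder\'on--Zygmund/duality step also needs). For $t$ small this is local parabolic regularity, but for $t$ large it is not a soft consequence of ``bounded geometry plus compactly supported perturbation,'' and in fact it cannot be: if $M$ has $k \ge 2$ Euclidean ends of dimension $n$ (still a compactly supported perturbation, still doubling, still satisfying the Gaussian upper bound), then the Riesz transform is \emph{unbounded} on $L^p$ for $p \ge n$. Since every other ingredient in your sketch---the Gaussian heat kernel upper bound, the subordination formula, the Coulhon--Duong weak-$(1,1)$ argument for $1<p\le 2$, the duality step for $p>2$---applies verbatim to the multi-ended case, the full-strength gradient bound must fail there, and therefore any honest proof has to use the one-end hypothesis in an essential way at exactly this point. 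Your phrase ``perturbing off the Euclidean case'' is where all the real work is concentrated, and as written it would ``prove'' a false statement.

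Carron's actual proof avoids heat-kernel gradient bounds altogether. It is a low-energy resolvent analysis: one studies $(-\Delta_g - (\lambda \pm i0)^2)^{-1}$ for small $\lambda$ via a gluing parametrix, and the obstruction to $L^p$ boundedness for large $p$ appears as a singular term in the low-energy expansion coming from harmonic functions that tend to different constants on different ends. With a single Euclidean end this term is absent, and the Riesz transform differs from the Euclidean one by an operator that is bounded on all $L^p$, $1<p<\infty$. This is a genuinely different route from the Calder\'on--Zygmund/heat-kernel machinery you propose, and it is chosen precisely because it makes the end-count visible. If you want to salvage the heat-kernel approach, you would need to actually prove the large-time gradient bound for one Euclidean end (this is known to be true in that case, but the proof is nontrivial and again relies on low-energy resolvent or Green's-function comparison with the flat model); as a blind sketch, that step cannot be waved through.
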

	By a well-known duality argument (see for example \cite[Section 
	2.1]{CoulhonDuong2003}), this implies the reverse inequality whose proof we 
	give for completeness:
	\begin{cor}
		\[
		\| (-\Delta_g)^{1/2} u\|_{L^p} \lesssim_p \| d u\|_{L^p}, \ \forall u  
		\in C^\infty_0, \ 1 < p < \infty.
		\]
	\end{cor}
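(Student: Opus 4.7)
The plan is to deduce the corollary from Proposition~\ref{ch4:p:riesz_transform_bdd} by a duality argument. Write $R := d(-\Delta_g)^{-1/2}$ for the Riesz transform, so that Proposition~\ref{ch4:p:riesz_transform_bdd} gives $R : L^q(M) \to L^q(M; T^*M)$ boundedly for every $1 < q < \infty$. Its formal adjoint with respect to the Riemannian $L^2$ pairing is $R^* = (-\Delta_g)^{-1/2} d^*$, where $d^*$ denotes the codifferential. Since $-\Delta_g = d^* d$ on functions, for any $u \in C^\infty_0$ I have the operator identity
\[
(-\Delta_g)^{1/2} u = (-\Delta_g)^{-1/2} (-\Delta_g) u = (-\Delta_g)^{-1/2} d^* (du) = R^* (du).
\]
It therefore suffices to bound $R^*$ from $L^p(M; T^*M)$ to $L^p(M)$ for every $1 < p < \infty$.

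For this I would fix $1 < p < \infty$, take $\omega \in L^p(M; T^*M)$ and a test function $\varphi \in C^\infty_0$, and pair
\[
\int_M (R^*\omega)\, \bar\varphi \, dg = \int_M \langle \omega, R\varphi\rangle_g \, dg.
\]
H\"older's inequality together with Proposition~\ref{ch4:p:riesz_transform_bdd} at exponent $p'$ yield
\[
\left| \int_M (R^*\omega) \bar\varphi \, dg\right| \le \|\omega\|_{L^p} \|R\varphi\|_{L^{p'}} \lesssim_p \|\omega\|_{L^p} \|\varphi\|_{L^{p'}},
\]
and the bound $\|R^*\omega\|_{L^p} \lesssim_p \|\omega\|_{L^p}$ follows on taking the supremum over $\|\varphi\|_{L^{p'}} \le 1$. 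Inserting $\omega = du$ into the displayed identity above completes the proof.

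There is really no hard step here; the argument is essentially formal once Proposition~\ref{ch4:p:riesz_transform_bdd} is in hand. The only minor points to verify are the adjoint relation between $R$ and $R^*$ (standard, since $d$ and $d^*$ are $L^2(dg)$-adjoints and $(-\Delta_g)^{1/2}$ is self-adjoint on $L^2(dg)$) and the self-adjointness issues attached to realizing $R^*$ on the dense class of smooth compactly supported one-forms.
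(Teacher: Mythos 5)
Your proof is correct and follows essentially the same route as the paper's: both are the standard duality argument that moves $(-\Delta_g)^{1/2}$ onto the test function, uses $-\Delta_g = d^*d$ to produce $du$ on one side and the Riesz transform $d(-\Delta_g)^{-1/2}$ on the other, and then invokes Proposition~\ref{ch4:p:riesz_transform_bdd} at the dual exponent $p'$. The only cosmetic difference is that you package the computation as boundedness of $R^*$ on $L^p$, while the paper simply computes the pairing $\langle(-\Delta_g)^{1/2}u,v\rangle$ directly; the paper also flags (as you gesture at) that the intermediate operator manipulations should first be carried out for spectrally localized $v$ and then extended by density.
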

	\begin{proof}
		By duality, it suffices to show
		\begin{align*}
		|\langle (-\Delta_g)^{1/2} u, v \rangle | \lesssim \| d u\|_{L^p} \| \| 
		v\|_{L^{p'}}.
		\end{align*}
		Then
		\begin{align*}
		\langle (-\Delta_g)^{1/2}u, v \rangle &= \langle u, (-\Delta_g)^{1/2} v 
		\rangle 
		= \langle u, (-\Delta_g) (-\Delta_g)^{-1/2}v \rangle \\
		&= \langle d u, 
		d(-\Delta_g)^{-1/2} v \rangle
		\lesssim \| du \|_{L^p} \| v\|_{L^{p'}}.
		\end{align*}
		Note that while the intermediate manipulations are justified for $v$ 
		spectrally 
		localized away from $0$ and $\infty$, we may then pass to general $v 
		\in 
		L^{p'}$ using~\eqref{ch4:e:Lp_completeness} below.
	\end{proof}
	
	Noting also that
	\begin{align*}
	\| d f \|_{L^p} = \| d (-\Delta_g)^{-1/2} (-\Delta_g)^{1/2} 
	f\|_{L^p} \lesssim \| (-\Delta_g)^{1/2} f \|_{L^p},
	\end{align*}
	we summarize the previous two estimates in the following
	\begin{cor}[Equivalence of Sobolev norms]
		\label{ch4:c:sob_equiv}
		For all $1 < p < \infty$ and $f \in C^\infty_0$,
		\begin{align*}
		\| (-\Delta_{\delta})^{1/2} u\|_{L^p} \sim_p \| df \|_{L^p} \sim_p \| 
		(-\Delta_g)^{1/2} u\|_{L^p}.
		\end{align*}
	\end{cor}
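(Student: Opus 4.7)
The corollary is a cosmetic combination of the two estimates already in hand, so my plan is simply to chain them together and then apply the same machinery with $g$ replaced by the Euclidean metric $\delta$. Concretely, the displayed computation immediately preceding the statement gives the bound $\|df\|_{L^p} \lesssim \|(-\Delta_g)^{1/2}f\|_{L^p}$ by writing $df = d(-\Delta_g)^{-1/2}(-\Delta_g)^{1/2}f$ and quoting Proposition~\ref{ch4:p:riesz_transform_bdd}; the preceding Corollary (the duality argument) gives the reverse inequality. Together these yield $\|(-\Delta_g)^{1/2}f\|_{L^p}\sim_p \|df\|_{L^p}$.

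To complete the triple equivalence, I would observe that the standing hypotheses on $g$ (smooth on $\mathbb{R}^3$, Euclidean outside the unit ball) are inherited trivially by $g=\delta$ itself, so Proposition~\ref{ch4:p:riesz_transform_bdd} and its corollary apply verbatim with $\Delta_g$ replaced by $\Delta_\delta$. This gives $\|(-\Delta_\delta)^{1/2}f\|_{L^p}\sim_p \|df\|_{L^p}$, where on the right $|df|$ could equally well be measured using $g$ or $\delta$ since the two metrics are pointwise comparable; similarly $dg$ and $dx$ differ by a factor of $\sqrt{|g|}$ which is bounded above and below, so all the ambient $L^p$ norms agree up to constants.

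Chaining the two equivalences through the common middle term $\|df\|_{L^p}$ produces the claim. There is no real obstacle here; the only point deserving mild care is keeping track of which metric is being used to measure $|df|$ and which measure underlies the $L^p$ norm, but on an asymptotically Euclidean manifold with a smooth metric these choices only affect absolute constants. The genuine analytic content---boundedness of the Riesz transform on $L^p$ in the asymptotically Euclidean setting---has already been invoked through Proposition~\ref{ch4:p:riesz_transform_bdd}.
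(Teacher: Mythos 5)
Your proposal is correct and follows the paper's own route: the text immediately preceding the corollary already establishes $\|df\|_{L^p}\sim_p\|(-\Delta_g)^{1/2}f\|_{L^p}$ from Proposition~\ref{ch4:p:riesz_transform_bdd} and its dual, and the paper presents Corollary~\ref{ch4:c:sob_equiv} merely as a summary with the Euclidean case left implicit. Your observation that $g=\delta$ satisfies the hypotheses of Proposition~\ref{ch4:p:riesz_transform_bdd} (or equivalently that the Euclidean Riesz bound is classical), together with the remark that $|df|_g\sim|df|_\delta$ and $dg\sim dx$ only perturb constants, is exactly the right way to make the chaining explicit.
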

	
	This corollary lets one transfer the Euclidean Leibniz and chain rule 
	estimates to the Sobolev norms defined by $(-\Delta_g)^{1/2}$, which are 
	better adapted to the equation as $(-\Delta_g)^{1/2}$ commutes with the 
	linear propagator. 
	We shall frequently employ the following
	
	\begin{cor}
		\label{ch4:c:chain_rule}
		\begin{align*}
		\| (-\Delta_g)^{1/2} F(u) \|_{L^p} \lesssim \| F'(u)\|_{L^q} \|
		(-\Delta_g)^{1/2} u\|_{L^r}
		\end{align*}
		whenever $p^{-1} = q^{-1} + r^{-1}$. In particular, we have
		\begin{align*}
		\| (-\Delta_g)^{1/2} (|u|^4u)\|_{L^2 L^{\frac{6}{5}}} \lesssim \| 
		u\|_{L^{10}
			L^{10}}^4 \| (-\Delta_g)^{1/2} \|_{L^{10} L^{\fr{30}{13}}}.
		\end{align*}
		
	\end{cor}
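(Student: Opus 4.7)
The plan is to reduce the statement to an ordinary Leibniz-type estimate involving the differential $d$, using the equivalence of Sobolev norms already established in Corollary~\ref{ch4:c:sob_equiv}. The key observation is that while $(-\Delta_g)^{1/2}$ is a nonlocal operator for which a fractional chain rule is in principle subtle, the $L^p$-norm of $(-\Delta_g)^{1/2} f$ is comparable to $\|df\|_{L^p}$, and $d$ is a genuine first-order local operator obeying a pointwise Leibniz rule. This trick bypasses the need for a delicate fractional calculus argument.

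Concretely, I would proceed as follows. First, apply Corollary~\ref{ch4:c:sob_equiv} on the left-hand side to pass from $\|(-\Delta_g)^{1/2} F(u)\|_{L^p}$ to $\|dF(u)\|_{L^p}$, up to a constant depending only on $p$. Next, use the pointwise identity $dF(u) = F'(u)\, du$ (interpreting $u$ as $\mathbb{C}$-valued and $F'$ as the real differential of $F:\mathbb{R}^2 \to \mathbb{R}^2$; for $F(u)=|u|^4u$ this yields $|dF(u)|\lesssim |u|^4 |du|$), and then H\"{o}lder's inequality with exponents $p^{-1}=q^{-1}+r^{-1}$, to estimate $\|dF(u)\|_{L^p} \lesssim \|F'(u)\|_{L^q}\, \|du\|_{L^r}$. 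Finally, apply Corollary~\ref{ch4:c:sob_equiv} in reverse to rewrite $\|du\|_{L^r} \lesssim \|(-\Delta_g)^{1/2} u\|_{L^r}$.

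For the specific bound with $F(u)=|u|^4 u$, one takes $q=\tfrac{5}{2}$ and $r=\tfrac{30}{13}$ in space (so that $\tfrac{5}{6}=\tfrac{2}{5}+\tfrac{13}{30}$), so that $\|F'(u)\|_{L^{5/2}_x} \lesssim \||u|^4\|_{L^{5/2}_x} = \|u\|_{L^{10}_x}^4$. One then combines this with H\"{o}lder's inequality in time using the split $\tfrac{1}{2}=\tfrac{4}{10}+\tfrac{1}{10}$ to match the norms $L^{10}_t L^{10}_x$ and $L^{10}_t L^{30/13}_x$ on the right-hand side.

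The main subtlety, really the only one, is justifying the pointwise chain rule for $F(u)=|u|^4 u$, since $F$ is not $C^\infty$ at the origin. This is resolved by noting that $F \in C^1(\mathbb{R}^2;\mathbb{R}^2)$ with the stated derivative bound, so $dF(u) = F'(u)\,du$ holds almost everywhere for $u \in C^\infty_0$, which suffices. All the Sobolev-norm manipulations are justified for smooth compactly supported functions, and the estimate extends by density to the relevant function spaces in view of the completeness statement referenced in the paper.
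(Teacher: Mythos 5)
Your proposal is correct, and the exponent arithmetic ($\tfrac{5}{6}=\tfrac{2}{5}+\tfrac{13}{30}$ spatially, $\tfrac{1}{2}=\tfrac{4}{10}+\tfrac{1}{10}$ temporally) checks out.

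The paper states the corollary without a detailed proof, immediately after Corollary~\ref{ch4:c:sob_equiv}, and the surrounding remark suggests the intended route is to pass from $(-\Delta_g)^{1/2}$ to $(-\Delta_\delta)^{1/2}$ and invoke the standard Euclidean fractional chain rule (Christ--Weinstein / Taylor), then transfer back. You instead use the $\|df\|_{L^p}$ member of the equivalence in Corollary~\ref{ch4:c:sob_equiv}, which reduces the whole thing to the pointwise chain rule for the classical first-order differential $d$ followed by H\"older. This is a genuinely simpler implementation: the fractional chain rule is a nontrivial harmonic-analysis fact, whereas the pointwise identity $dF(u)=F_z(u)\,du+F_{\bar z}(u)\,d\bar u$ is elementary (and for the concrete nonlinearity $F(u)=|u|^4u=u^3\bar u^2$, $F$ is a polynomial in $u,\bar u$, so no regularity issue arises at all — your cautionary remark about $F$ not being $C^\infty$ at the origin is unnecessary here, though it would matter for non-integer powers). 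Both routes buy the same estimate; yours is the cleaner one given that the intermediate norm $\|df\|_{L^p}$ is already explicitly available from Corollary~\ref{ch4:c:sob_equiv}.
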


	\subsection{Strichartz estimates}
	
	
	Local-in-time Strichartz estimates without loss for compact nontrapping 
	metric 
	perturbations were first established by Staffilani and 
	Tataru~\cite{StaffilaniTataru2002}. As later observed, their argument can 
	be combined with the 
	global local smoothing estimate of Rodnianski and Tao to deduce 
	global-in-time Strichartz estimates~\cite{RodnianskiTao2007}. As mentioned 
	in the introduction, these results have since been extended to long-range 
	metrics.
	\begin{prop}
		\label{ch4:p:str}[\cite{StaffilaniTataru2002, RodnianskiTao2007}]
		For any function $u : I \times \mf{R}^3 \to \mf{C}$,
		\[
		\| u\|_{L^\infty L^2 \cap L^2 L^6} \lesssim \|u(0)\|_{L^2} + 
		\|(i\partial_t + 
		\Delta_g) u
		\|_{L^1 L^2 + L^2 L^{6/5}}\]
		In particular, by Sobolev embedding and Corollary~\ref{ch4:c:sob_equiv},
		\begin{align*}
		\| u\|_{L^{10} L^{10} } \lesssim \| (-\Delta_g)^{1/2} u\|_{L^{10} 
			L^{\frac{30}{13}}} 
		\lesssim \| u(0) 
		\|_{\dot{H}^1} + \| 
		\nabla (i\partial_t + \Delta_g) u\|_{L^1 L^2 + L^2 L^{6/5}}.
		\end{align*}
	\end{prop}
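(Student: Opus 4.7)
The plan is to combine two ingredients that, on nontrapping asymptotically Euclidean backgrounds, are individually well-understood: a short-time Strichartz estimate in the spirit of Staffilani--Tataru, and a global-in-time weighted local smoothing estimate. The inhomogeneous bound then follows from the homogeneous one by Christ--Kiselev (the pair $(2,6)$ being strictly admissible in dimension three), so I will focus on showing $\|e^{it\Delta_g}f\|_{L^2_t L^6_x(\mathbb{R}\times \mathbb{R}^3)} \lesssim \|f\|_{L^2}$, after which Duhamel recovers the full inequality. Sobolev embedding then yields the second displayed estimate.

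The first step is short-time Strichartz: on any fixed compact time interval $I$, one has $\|e^{it\Delta_g}f\|_{L^2 L^6(I)} \lesssim \|f\|_{L^2}$. The Staffilani--Tataru route is to perform a Littlewood--Paley decomposition $f = \sum_N P_N f$ and, for each frequency-$N$ piece, build an FBI/wave-packet parametrix for $e^{it\Delta_g}$ on the semiclassical timescale $|t| \lesssim N^{-1}$; on such scales the bicharacteristic flow remains essentially straight, yielding the frequency-localized dispersive estimate of Burq--G\'{e}rard--Tzvetkov, and hence by $TT^\ast$ the frequency-localized Strichartz estimate. Patching $O(N)$ such intervals covers any bounded time, and a square-function argument (Keel--Tao orthogonality, paired with the spectral multiplier theorem for $\Delta_g$) sums over $N$.

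The second step is to globalize in time via local smoothing. Since $(M,g)$ is nontrapping and Euclidean outside the unit ball, a Mourre/positive-commutator argument (or the resolvent estimates of Cardoso--Vodev) gives
\begin{align*}
\|\langle x\rangle^{-s}(-\Delta_g)^{1/4}u\|_{L^2_{t,x}(\mathbb{R}\times\mathbb{R}^3)} \lesssim \|u(0)\|_{L^2} + \|(i\partial_t+\Delta_g)u\|_{L^1L^2+L^2L^{6/5}},
\end{align*}
for any $s>1/2$, with no loss over all of $\mathbb{R}$. Writing $\Delta_g = \Delta_\delta + V$ with $V$ a second-order differential operator supported in $B(0,1)$, the linear flow $u_L = e^{it\Delta_g}f$ satisfies $(i\partial_t+\Delta_\delta)u_L = -Vu_L$, so the Euclidean Strichartz inequality reduces the problem to controlling $\|Vu_L\|_{L^2L^{6/5}}$. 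The spatial support confines this to $B(0,1)$, where H\"{o}lder allows passage to $L^2_{t,x}$, and the frequency-localized local smoothing bound, combined with the short-time Strichartz from Step~1 to absorb the second derivatives, handles each dyadic piece. A final dyadic summation, using almost-orthogonality of distinct frequency scales in $L^2_{t,x}$, completes the proof.

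The main obstacle is the passage from local-in-time to global-in-time in Step~3: the naive argument gives a linearly growing constant in $|I|$, and only the global-in-time character of the local smoothing estimate---which in turn requires the nontrapping hypothesis essentially---prevents this. All other components, including the Euclidean Strichartz inequality applied to $(i\partial_t+\Delta_\delta)$ and the Littlewood--Paley calculus for $(-\Delta_g)^{1/2}$ justified by Proposition~\ref{ch4:p:riesz_transform_bdd}, are comparatively routine.
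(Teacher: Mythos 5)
The paper does not prove Proposition~\ref{ch4:p:str}; it cites \cite{StaffilaniTataru2002, RodnianskiTao2007} and moves on. Your proposal correctly identifies the broad strategy used by those references (semiclassical wave-packet parametrix at each frequency for the local-in-time estimate, then globalization via nontrapping local smoothing), but two steps in your gluing argument would not close as written.

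First, the pair $(q,r)=(2,6)$ is the \emph{endpoint} of the admissible region in dimension three ($2/q+3/r=3/2$ forces $r=6$ when $q=2$), not a ``strictly admissible'' pair. Christ--Kiselev requires the source time-exponent to be strictly smaller than the target, which fails at $(\tilde q',q)=(2,2)$. The cited references obtain the endpoint $L^2_tL^6_x$ via the bilinear $TT^*$/Keel--Tao real-interpolation scheme (or directly as a dual local smoothing estimate), not by Christ--Kiselev.

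Second, and more seriously, the derivative counting in your third step does not balance. Writing $\Delta_g=\Delta_\delta+V$ with $V$ a \emph{second-order} operator compactly supported in the ball, you propose to control $\|Vu_L\|_{L^2L^{6/5}}$ by local smoothing. At frequency $N$, local smoothing yields $\|\langle x\rangle^{-s}\nabla u_N\|_{L^2_{t,x}}\lesssim N^{1/2}\|P_Nf\|_{L^2}$ (half a derivative of gain), so $\|\chi\nabla^2 u_N\|_{L^2_{t,x}}\lesssim N^{3/2}\|P_Nf\|_{L^2}$; plugging into the inhomogeneous Euclidean Strichartz estimate produces an unacceptable $N^{3/2}$ loss. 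Even the standard improvement of cutting off to $(1-\chi)u$, which makes the forcing first-order in $u$ via the commutator $[\chi,\Delta_\delta]$, still leaves an $N^{1/2}$ deficit if one uses local smoothing only in its ``forward'' form. The resolution in Rodnianski--Tao (and implicitly in Staffilani--Tataru's phase-space version) is to pair the forward local smoothing estimate with its \emph{retarded/dual} counterpart
\[
\Bigl\|\int_0^t e^{i(t-s)\Delta_\delta}G(s)\,ds\Bigr\|_{L^2_tL^6_x}\ \lesssim\ \bigl\|\langle x\rangle^{s}\,|\nabla|^{-1/2}G\bigr\|_{L^2_{t,x}},\qquad s>1/2,
\]
so that the $|\nabla|^{-1/2}$ absorbs half a derivative from the first-order commutator $G=[\chi,\Delta]u$, and the remaining $|\nabla|^{1/2}u$ is controlled by forward local smoothing. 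This retarded smoothing estimate (itself nontrivial, of Ionescu--Kenig/Rodnianski--Tao type) is the missing ingredient. Your allusion to ``the short-time Strichartz from Step 1 to absorb the second derivatives'' does not supply a mechanism: short-time Strichartz is an $L^2\to L^2L^6$ bound and does not gain derivatives.
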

	
	In the sequel we adopt the notation
	\begin{align*}
	Z(I) = L^{10}_t L^{10}_x (I \times \mf{R}^3), \quad N(I) = ( L^1_t L^2_x + 
	L^2_t 
	L^{6/5}_x)(I \times \mf{R}^3).
	\end{align*}

	\subsection{Some harmonic analysis}
	
	In this section we set up a Littlewood-Paley theory, which will underlie 
	the 
	linear profile decomposition. We use the heat semigroup and follow 
	essentially 
	standard arguments that combine a
	spectral multiplier theorem with heat kernel bounds.
	
	Gaussian heat kernel bounds for $\Delta_g$ are classical. We quote a result 
	of 
	Aronson, who in fact considered uniformly elliptic 
	operators on Euclidean space; see the book~\cite{Grigoryan2009} for a 
	comprehensive survey. 
	\begin{thm}[\cite{Aronson1967}]
		\label{ch4:t:gaussian_heat_bounds}
		There exist a constant $c > 0$ such that
		\[
		e^{t\Delta_g}(x, y) \le c_1 t^{-\frac{3}{2}} e^{-\frac{d_g(x,y)^2}{ct}},
		\]
		where $d_g(x, y)$ is the Riemannian distance between $x$ and $y$.
	\end{thm}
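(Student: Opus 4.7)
The plan is to follow the classical Nash--Davies strategy, which packages the Gaussian estimate into an on-diagonal bound combined with an exponential-weight perturbation argument. First I would establish the on-diagonal piece $e^{t\Delta_g}(x,x) \lesssim t^{-3/2}$ via Nash's inequality. Because $g$ agrees with $\delta$ outside the unit ball and is smooth, it is uniformly equivalent to $\delta$, so the Euclidean Sobolev embedding $\dot{H}^1 \hookrightarrow L^6(\mf{R}^3)$ transfers to $(\mf{R}^3, g)$, yielding Nash's inequality $\|f\|_{L^2}^{2+4/3} \lesssim \|f\|_{L^1}^{4/3} \|df\|_{L^2(g)}^2$. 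Applying this to $u(t) = e^{t\Delta_g}f$ and using $\frac{d}{dt}\|u\|_{L^2}^2 = -2 \|du\|_{L^2(g)}^2$ gives the differential inequality that integrates to $\|e^{t\Delta_g}\|_{L^1 \to L^2} \lesssim t^{-3/4}$. Self-adjointness and the $TT^*$ identity then promote this to $\|e^{t\Delta_g}\|_{L^1 \to L^\infty} \lesssim t^{-3/2}$.

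Next I would import the off-diagonal decay via Davies' exponential weight method. For a bounded Lipschitz function $\phi$ with $|\nabla \phi|_g \le 1$ and a parameter $\lambda > 0$, consider the conjugated semigroup $P^\phi_t := e^{\lambda \phi} e^{t\Delta_g} e^{-\lambda \phi}$. Its generator acts as $\Delta_g - 2\lambda \langle \nabla \phi, \nabla\cdot\rangle_g + \lambda^2 |\nabla \phi|_g^2$, and a short energy estimate, using $|\nabla \phi|_g \le 1$ and Cauchy--Schwarz, gives $\|P^\phi_t\|_{L^2 \to L^2} \le e^{\lambda^2 t}$. Rerunning Nash's argument with this modified generator (the gradient terms are absorbed by Cauchy--Schwarz) and interpolating yields the kernel bound
\[
P^\phi_t(x,y) \lesssim t^{-3/2} e^{C\lambda^2 t},
\]
which unwraps to
\[
e^{t\Delta_g}(x,y) \lesssim t^{-3/2} \exp\bigl(\lambda(\phi(y) - \phi(x)) + C\lambda^2 t\bigr).
\]
Choosing $\phi(z) = d_g(z,y)$ (which is $1$-Lipschitz in the Riemannian metric, with $|\nabla \phi|_g \le 1$ almost everywhere by Rademacher's theorem) and optimizing in $\lambda$ produces the Gaussian factor $\exp(-d_g(x,y)^2/(ct))$ claimed.

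The main technical obstacle is handling the non-smoothness of the distance function $d_g(\cdot, y)$ when applying Davies' conjugation. I would sidestep this either by smoothing $\phi$ by a suitable mollifier and taking limits, or by truncating $\phi$ to a sequence of bounded Lipschitz functions and passing to a weak limit at the end. A minor subtlety is also that Davies' method requires the weighted semigroup to be well-defined on $L^2$; boundedness of $\phi$ (enforced by the truncation) ensures this, and the truncation is harmless because the optimization in $\lambda$ only depends on $\phi(y) - \phi(x)$. Modulo these standard technicalities, all other steps (Nash's differential inequality, $TT^*$ symmetrization, interpolation) are essentially mechanical given the uniform ellipticity of $\Delta_g$ inherited from the smoothness and asymptotic flatness of $g$.
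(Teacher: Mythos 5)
The paper does not prove this statement; it quotes it from Aronson \cite{Aronson1967} and points to Grigoryan's book for a survey. Your Nash--Davies argument is a correct reconstruction and is exactly the standard modern proof one finds in that reference: Nash's inequality (transferred from the Euclidean one via uniform ellipticity) gives the on-diagonal bound $t^{-3/2}$, and Davies' exponential weight trick $e^{\lambda\phi}e^{t\Delta_g}e^{-\lambda\phi}$ with $\phi$ a Lipschitz approximation to $d_g(\cdot,y)$ upgrades it to the full Gaussian bound after optimizing in $\lambda$. One tiny nit: the conjugated generator is actually $\Delta_g - 2\lambda\langle\nabla\phi,\nabla\cdot\rangle_g + \lambda^2|\nabla\phi|_g^2 - \lambda(\Delta_g\phi)$; you omitted the $\lambda(\Delta_g\phi)$ term, but the energy estimate you need only uses the quadratic form $-\int g(\nabla(e^{-\lambda\phi}u),\nabla(e^{\lambda\phi}u))\,dg = -\int |\nabla u|_g^2 + \lambda^2\int |\nabla\phi|_g^2|u|^2$, which is correct as you wrote the conclusion, so the omission is harmless. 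The truncation/mollification of $d_g(\cdot,y)$ you flag is the standard way to handle the Lipschitz weight and is sound.
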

	
	In view of this bound, we have access to a
	very general spectral multiplier theorem. For simplicity we 
	state just the special case that we need.
	\begin{thm}[{\cite[Theorem 3.1]{ThinhDuongOuhabazSikora2002}}]
		\label{ch4:t:multiplier}
		For any $F$ satisfying the 
		homogeneous symbol estimates 
		\[
		|\lambda^k \partial^k 
		F(\lambda)| \le C_k \text{ for all } 0 \le k \le \lceil{\fr{n}{2}} 
		\rceil 
		+ 1,
		\]
		the operator $F(-\Delta_g)$ maps $L^1 \to L^{1, \infty}$ and $L^p \to 
		L^p$ for 
		all $1 < p < \infty$.
	\end{thm}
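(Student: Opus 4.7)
The plan is to establish the $L^2$ boundedness from the spectral theorem, deduce the weak-type $(1,1)$ bound by running a Calder\'on--Zygmund decomposition adapted to the heat semigroup, and then interpolate and dualize for the remaining $L^p$ bounds. The essential analytic inputs are the Gaussian heat kernel bound from Theorem~\ref{ch4:t:gaussian_heat_bounds} and the fact that $(\mathbb{R}^3, d_g, dg)$ is a doubling metric-measure space (which is automatic here since $g$ is asymptotically Euclidean).

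First I would dyadically decompose $F = \sum_{j \in \mathbb{Z}} F_j$ with $F_j(\lambda) = F(\lambda)\,\phi(2^{-j}\lambda)$ for a bump $\phi$ supported in $[1/2, 2]$; the homogeneous symbol hypothesis translates into \emph{uniform} $C^s$ bounds on the rescaled symbols $\lambda \mapsto F_j(2^j \lambda)$, where $s = \lceil n/2 \rceil + 1$. The key technical step is a Plancherel-type weighted $L^2$ estimate for the kernels $K_j(x,y)$ of $F_j(-\Delta_g)$:
\[
\int |K_j(x,y)|^2 \bigl(1 + 2^{j/2} d_g(x,y)\bigr)^{2\alpha} \, dg(y) \;\lesssim\; 2^{jn/2}
\]
for some $\alpha > n/2$. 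I would obtain this by factoring $F_j(-\Delta_g) = e^{2^{-j}\Delta_g/2}\, G_j(-\Delta_g)\, e^{2^{-j}\Delta_g/2}$ with $G_j(\lambda) = e^{2\cdot 2^{-j}\lambda} F_j(\lambda)$ (still obeying the same symbol bounds, uniformly in $j$): the Gaussian heat kernel bound on each $e^{2^{-j}\Delta_g/2}$ produces the Gaussian weight, while the spectral $L^2 \to L^2$ bound on $G_j(-\Delta_g)$ supplies the $L^2$ mass.

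With the weighted kernel estimate in hand, the Calder\'on--Zygmund machinery on spaces of homogeneous type applies in standard fashion. Given $f \in L^1$ at level $\alpha$, CZ decomposition yields $f = h + \sum_Q b_Q$ with $\|h\|_{L^2}^2 \lesssim \alpha \|f\|_{L^1}$ and $b_Q$ mean-zero and supported on a cube $Q$; the good part is handled by the $L^2$ bound, while each $b_Q$ is split as $b_Q = e^{t_Q \Delta_g} b_Q + (I - e^{t_Q \Delta_g}) b_Q$ with $t_Q \sim \ell(Q)^2$. The mean-zero condition paired with the Gaussian heat bound controls the first piece, and the weighted kernel estimate, summed over dyadic frequency scales $2^j$ and compared with the side length $\ell(Q)$, controls the second. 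This gives $F(-\Delta_g) : L^1 \to L^{1,\infty}$; Marcinkiewicz interpolation with the trivial $L^2$ bound gives $L^p$ for $1 < p \le 2$, and duality (noting that $-\Delta_g$ is self-adjoint on $L^2(dg)$ and that $\overline{F}$ satisfies the same symbol hypothesis) handles $2 \le p < \infty$.

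The hardest part is the weighted kernel estimate, for which the sharp derivative count $\lceil n/2 \rceil + 1$ is essentially forced: roughly $n/2$ derivatives pay for the $L^2 \to L^\infty$ loss needed to pass from spectral multiplier bounds to pointwise kernel bounds on balls, while the extra derivative supplies the integrable weight that lets one execute the CZ argument. Absent translation invariance, one cannot differentiate the kernel pointwise in the usual way; instead one interpolates (in the spirit of Hebisch) between $L^2$ mass concentration near the diagonal and the weighted decay just established, extracting an effective CZ-type smoothness inequality from spectral information alone.
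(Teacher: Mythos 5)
The paper does not prove this theorem---it is quoted verbatim as a black box from \cite[Theorem 3.1]{ThinhDuongOuhabazSikora2002}, and the only thing the paper contributes here is verifying that the hypotheses (Gaussian upper bounds for $e^{t\Delta_g}$, supplied by Theorem~\ref{ch4:t:gaussian_heat_bounds}, on a space that is manifestly doubling since $g$ is a compactly supported perturbation of $\delta$) are satisfied. So there is no proof in the paper to compare against; what you have written is instead a sketch of the proof \emph{from the cited reference}, and as such it is a faithful outline of the Duong--Ouhabaz--Sikora/Hebisch argument: dyadic decomposition of the multiplier, a weighted Plancherel estimate on the kernels obtained by factoring through the heat semigroup, a Calder\'on--Zygmund decomposition where the cancellation usually provided by mean-zero is replaced by the heat-semigroup approximation $(I - e^{t_Q\Delta_g})$, and then Marcinkiewicz interpolation plus self-adjoint duality. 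This is the right architecture and the right use of the hypotheses. A couple of small inconsistencies: your factorization $F_j(-\Delta_g) = e^{2^{-j}\Delta_g/2}\,G_j(-\Delta_g)\,e^{2^{-j}\Delta_g/2}$ forces $G_j(\lambda) = e^{2^{-j}\lambda}F_j(\lambda)$, not $e^{2\cdot 2^{-j}\lambda}F_j(\lambda)$ (or else use $e^{2^{-j}\Delta_g}$ on each side); and the genuine work, which you correctly flag, is turning the Gaussian heat bound into the weighted $L^2$ kernel estimate with an integrable weight---that is where the precise derivative count $\lceil n/2\rceil + 1$ is consumed. At the level of a sketch this is all fine, but be aware that the statement in the paper should be read as a citation, not as a result proved in the text.
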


	
	For a dyadic number $N  \in 2^{\mf{Z}}$, define 
	Littlewood-Paley projections in terms of the heat kernel
	\begin{alignat*}{4}
	\tilde{P}_{\le N} &= e^{\Delta_g / N^2}, &\quad \tilde{P}_{N} &= 
	e^{\Delta_g/N^2} - e^{4 \Delta_g /N^2}.
	\end{alignat*}
	
	Later (see Lemma~\ref{ch4:l:loc_smoothing}) we also introduce 
	Littlewood-Paley projections $P_{\le N}$ and $P_N$ using compactly 
	supported spectral 
	multipliers instead of the heat kernel.
	
	We have the Bernstein estimates
	\begin{prop}
		\label{ch4:p:bernstein}
		\begin{gather}
		\| \tilde{P}_{\le N} \|_{L^p \to L^p}
		\le 2, \ 1 < p < \infty.\label{ch4:e:LP-Lpbdd}\\
		\| \tilde{P}_{\le N} \|_{L^p \to L^q} \le c N^{\fr{d}{p} -
			\fr{d}{q}}, \ 1 \le p \le q \le \infty. \label{ch4:e:LP-bernstein}\\
		f = \sum_{N} \tilde{P}_{N} f  \ \text{ in } L^p, \ 1 
		< p < \infty. 
		\label{ch4:e:Lp_completeness}.
		\end{gather}
		Also, for all $1 < p < \infty$, the following square function estimate 
		holds
		\begin{align}
		\| (-\Delta_g)^{\frac{s}{2}} f\|_{L^p} 
		\sim_p \Bigl \| ( \sum_N |N^{s} (\tilde{P}_N)^k f |^2 
		)^{1/2} f\Bigr \|_{L^p} ,  \label{ch4:e:LP-sqfn}
		\end{align}
		whenever $2k > s$.
	\end{prop}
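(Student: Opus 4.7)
My plan is to handle the four items in order, leveraging the spectral multiplier theorem (Theorem~\ref{ch4:t:multiplier}) together with the Gaussian heat kernel bound (Theorem~\ref{ch4:t:gaussian_heat_bounds}).

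For \eqref{ch4:e:LP-Lpbdd}, I observe that $F_N(\lambda) = e^{-\lambda/N^2}$ satisfies $|\lambda^k F_N^{(k)}(\lambda)| = (\lambda/N^2)^k e^{-\lambda/N^2} \le \sup_{s\ge 0} s^k e^{-s}$ uniformly in $N$ and $\lambda$, so Theorem~\ref{ch4:t:multiplier} produces a uniform $L^p$ bound. For \eqref{ch4:e:LP-bernstein}, integrating the Gaussian bound in $y$ gives $\|e^{t\Delta_g}\|_{L^1 \to L^\infty} \lesssim t^{-3/2}$, i.e.\ $\|\tilde{P}_{\le N}\|_{L^1\to L^\infty} \lesssim N^3$ at $t = N^{-2}$; Riesz--Thorin interpolation with \eqref{ch4:e:LP-Lpbdd} furnishes the general Bernstein inequality. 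For \eqref{ch4:e:Lp_completeness}, I rewrite $\tilde{P}_N = \tilde{P}_{\le N} - \tilde{P}_{\le N/2}$ (since $4\Delta_g/N^2 = \Delta_g/(N/2)^2$) and telescope:
\[
\sum_{|j|\le M} \tilde{P}_{2^j} = \tilde{P}_{\le 2^M} - \tilde{P}_{\le 2^{-M-1}}.
\]
As $M \to \infty$, the first term tends to the identity in $L^p$ by strong continuity of the heat semigroup at $t=0$ (checked first on $C_c^\infty$, then extended via the uniform $L^p$ bound), while the second vanishes in $L^p$ by the Bernstein $L^1 \to L^p$ bound applied to $C_c^\infty$ approximations (decay $\sim 2^{-M \cdot 3(1 - 1/p)}$, hence the restriction $p < \infty$).

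The square function estimate \eqref{ch4:e:LP-sqfn} is the most involved item. By Khintchine's inequality applied pointwise, its right side is comparable to $\sup_{\epsilon_N = \pm 1}\|\sum_N \epsilon_N N^s (\tilde{P}_N)^k f\|_{L^p}$, so it suffices to bound the operator $T_\epsilon := \sum_N \epsilon_N N^s (\tilde{P}_N)^k (-\Delta_g)^{-s/2}$ on $L^p$ uniformly in the signs. Its spectral symbol is
\[
m_\epsilon(\lambda) = \sum_N \epsilon_N \phi(\lambda/N^2), \qquad \phi(\mu) := \mu^{-s/2}(e^{-\mu} - e^{-4\mu})^k.
\]
Since $e^{-\mu} - e^{-4\mu} = O(\mu)$ near $0$, the hypothesis $2k > s$ ensures that $\phi$ extends smoothly across $\mu=0$ and is Schwartz on $[0,\infty)$. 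I then verify the symbol estimates $|\lambda^\ell \partial^\ell m_\epsilon(\lambda)| \le C_\ell$ uniformly in $(\epsilon_N)$: each derivative $\lambda^\ell \partial_\lambda^\ell[\phi(\lambda/N^2)] = (\lambda/N^2)^\ell \phi^{(\ell)}(\lambda/N^2)$ is itself a Schwartz function of $\lambda/N^2$, and for any fixed $\lambda$ only dyadic $N$ with $N^2 \sim \lambda$ contribute non-negligibly. Theorem~\ref{ch4:t:multiplier} then yields the $L^p$ bound on $T_\epsilon$, giving the $\lesssim$ direction of \eqref{ch4:e:LP-sqfn}. The reverse inequality follows either from the same scheme applied to a Calder\'on-type resolution of $(-\Delta_g)^{s/2}$ obtained by telescoping, or more straightforwardly by duality.

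The principal technical obstacle is thus the uniform-in-$\epsilon$ symbol control underlying \eqref{ch4:e:LP-sqfn}; it is a Marcinkiewicz-style verification that exploits the dyadic separation of the scales $N^2 \in 4^{\mf{Z}}$, and is the only step that uses more than Gaussian heat-kernel bounds together with elementary interpolation.
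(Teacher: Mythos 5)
Your proposal is correct in spirit and overlaps heavily with the paper's proof — both deduce \eqref{ch4:e:LP-bernstein} by interpolating the $L^1\to L^\infty$ bound from the Gaussian heat kernel estimate against the $L^p\to L^p$ bound, and both obtain \eqref{ch4:e:LP-sqfn} by the Rademacher/Khintchine argument combined with Theorem~\ref{ch4:t:multiplier}, with the hypothesis $2k > s$ used exactly to make the symbol of $(\tilde{P}_N)^k(-\Delta_g)^{-s/2}$ vanish to sufficient order at $0$. Two smaller points diverge. For \eqref{ch4:e:LP-Lpbdd} you invoke the spectral multiplier theorem on the symbol $e^{-\lambda/N^2}$; the paper instead uses the conservativeness of the heat semigroup, $\int e^{t\Delta_g}(x,y)\,dg(y)\equiv 1$, to read off $\|e^{t\Delta_g}\|_{L^p\to L^p}\le 1$ directly for all $1\le p\le\infty$. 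The latter is both more elementary and actually produces the explicit constant $\le 2$ that the proposition asserts, whereas the multiplier theorem only gives some $C_p$. You should either invoke the Markov property as the paper does, or weaken the stated constant. For \eqref{ch4:e:Lp_completeness} you telescope $\sum_{|j|\le M}\tilde P_{2^j}=\tilde P_{\le 2^M}-\tilde P_{\le 2^{-M-1}}$ and argue via strong continuity of the heat semigroup at $t=0$ together with the Bernstein $L^1\to L^p$ decay on a dense subclass; the paper instead gets the $L^2$ case from the functional calculus, uses Theorem~\ref{ch4:t:multiplier} for uniform $L^p$ boundedness of partial sums, and interpolates. Both are valid; yours has the modest advantage of being self-contained once one checks $C^\infty_0\subset L^p$ density, while the paper's is shorter given what is already in hand. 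The core observation you isolate at the end — that the uniform-in-signs Marcinkiewicz verification for $\phi(\mu)=\mu^{-s/2}(e^{-\mu}-e^{-4\mu})^k$ is the only nontrivial step — matches the paper's one-line remark that the lower bound on $k$ ensures the requisite vanishing at the origin.
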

	
	\begin{proof}
		By the pointwise bound~\eqref{ch4:t:gaussian_heat_bounds} on the heat 
		kernel,
		\begin{align}
		\label{ch4:e:heat_kernel_bound}
		\|e^{t\Delta_g}\|_{L^1 \to L^\infty} \le c t^{-3/2}.
		\end{align}
		By duality,
		\begin{align*}
		\|e^{t\Delta_g}\|_{L^1 \to L^2} = \| e^{t\Delta_g} \|_{L^2 \to 
		L^\infty} = \|
		e^{2t\Delta_g} \|_{L^1 \to L^\infty}^{1/2} \le c t^{-\fr{3}{4}}.
		\end{align*}
		Since $\int e^{t\Delta_g}(x, y) \, dg(y) = \int e^{t\Delta_g}(x, y) \, 
		dg(x) 
		\equiv 
		1$, we have 
		\begin{align*}
		\| e^{t\Delta_g} \|_{L^p \to L^p} \le 1, \ 1 \le p \le \infty.
		\end{align*}
		The claims~\eqref{ch4:e:LP-Lpbdd} and~\eqref{ch4:e:LP-bernstein} follow 
		from
		interpolating these estimates.
		
		The convergence in~\eqref{ch4:e:Lp_completeness} follows from the 
		functional 
		calculus when $p = 2$. On the other hand, 
		Theorem~\ref{ch4:t:multiplier} 
		ensures boundedness in $L^p$ for all $1 < p < \infty$. By 
		interpolation, one 
		gets convergence for all such $p$.
		
		Finally, the square function estimate~\eqref{ch4:e:LP-sqfn} follows the 
		standard
		argument using independent random signs and the multiplier 
		theorem~\ref{ch4:t:multiplier}. The lower bound on $k$ ensures that the 
		symbol 
		for $(\tilde{P}_N)^k$ (which is not quite compactly supported) vanishes 
		at the 
		origin to higher order 
		than the symbol for 
		$(-\Delta_g)^{s/2}$; see~\cite{kvz_exterior_harmonic_analysis} for 
		details.
	\end{proof}
	
	\subsection{Local wellposedness}
	
	We summarize some standard results concerning the local existence, 
	uniqueness, and stability of solutions. These are proved by the usual 
	contraction mapping and bootstrap arguments for the Euclidean NLS (see 
	\cite{claynotes} and the references therein). These arguments 
	apply equally well in dimensions $3 \le d \le 6$. When $d > 6$, however, 
	the 
	stability theorem is proved in the Euclidean setting using exotic 
	Strichartz 
	estimates~\cite{tao-visan_stability,claynotes}. These are 
	derived using the Euclidean dispersive estimate, which is unavailable to us.
	
	\begin{prop}
		\label{ch4:p:lwp}
		There exists $\varepsilon_0 > 0$ such that for any $u_0 \in \dot{H}^1$, 
		and for 
		any interval $I \ni 0$ such
		that 
		\begin{align*}
		\| (-\Delta_g)^{1/2} e^{it\Delta_g} u_0 \|_{L^{10} L^{\frac{30}{13}}(I 
		\times 
			\mf{R}^3)} \le
		\varepsilon \le \varepsilon_0,
		\end{align*}
		there is a unique solution to~\eqref{ch4:e:main_eq} on $I$ with $u(0,
		x) = u_0$, which also satisfies 
		\begin{align}
		\label{ch4:e:lwp_spacetime}
		\| (-\Delta_g)^{1/2} u\|_{L^{10} L^{\frac{30}{13}}} \le 2 \varepsilon.
		\end{align}
		In particular, solutions with sufficiently small energy
		are global and scatter.
	\end{prop}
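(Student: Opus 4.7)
The plan is a standard contraction mapping argument applied to the Duhamel map
\[
\Phi(u)(t) = e^{it\Delta_g} u_0 - i\int_0^t e^{i(t-s)\Delta_g}(|u|^4 u)(s)\,ds,
\]
on the ball $B = \{u : \|(-\Delta_g)^{1/2} u\|_{L^{10}_t L^{30/13}_x(I\times\mf{R}^3)} \le 2\varepsilon\}$ equipped with the distance $d(u,v) = \|u-v\|_{L^{10}_t L^{30/13}_x}$. The crucial point is that the Leibniz/chain rule for $(-\Delta_g)^{1/2}$ (Corollary~\ref{ch4:c:chain_rule}) and the Sobolev embedding $\dot{H}^{1,30/13}(\mf{R}^3) \hookrightarrow L^{10}(\mf{R}^3)$ (justified via Corollary~\ref{ch4:c:sob_equiv}) reduce the problem to the familiar Euclidean bookkeeping.

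First I would verify that $\Phi$ maps $B$ into itself. By Proposition~\ref{ch4:p:str} and Corollary~\ref{ch4:c:chain_rule},
\begin{align*}
\|(-\Delta_g)^{1/2}\Phi(u)\|_{L^{10}L^{30/13}}
&\le \varepsilon + C\,\|(-\Delta_g)^{1/2}(|u|^4 u)\|_{L^2 L^{6/5}} \\
&\le \varepsilon + C\, \|u\|_{L^{10}L^{10}}^4\, \|(-\Delta_g)^{1/2} u\|_{L^{10}L^{30/13}},
\end{align*}
and since $\|u\|_{L^{10}L^{10}} \lesssim \|(-\Delta_g)^{1/2} u\|_{L^{10}L^{30/13}} \le 2\varepsilon$ by Sobolev, this is at most $\varepsilon + C(2\varepsilon)^5 \le 2\varepsilon$ once $\varepsilon_0$ is small. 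For the contraction, I would apply Strichartz between the admissible pairs $(10,30/13)$ and $(2,6)$, followed by H\"older on the pointwise bound $||u|^4 u - |v|^4 v| \lesssim (|u|^4 + |v|^4)|u-v|$, to obtain
\[
\|\Phi(u) - \Phi(v)\|_{L^{10}L^{30/13}} \lesssim \bigl(\|u\|_{L^{10}L^{10}}^4 + \|v\|_{L^{10}L^{10}}^4\bigr)\,\|u-v\|_{L^{10}L^{30/13}} \lesssim \varepsilon^4\, d(u,v).
\]
A fixed point then furnishes the unique solution on $I$, and the spacetime bound~\eqref{ch4:e:lwp_spacetime} is built in.

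The final assertion is immediate. Strichartz together with Corollary~\ref{ch4:c:sob_equiv} gives $\|(-\Delta_g)^{1/2} e^{it\Delta_g} u_0\|_{L^{10}L^{30/13}(\mf{R}\times\mf{R}^3)} \lesssim \|u_0\|_{\dot{H}^1}$, so if $\|u_0\|_{\dot{H}^1}$ is small enough the hypothesis is met on $I = \mf{R}$, yielding a global solution. Scattering follows by showing that $e^{-it\Delta_g} u(t)$ is Cauchy in $\dot{H}^1$ as $|t|\to\infty$: the increment is bounded via Strichartz and Corollary~\ref{ch4:c:chain_rule} by $\|u\|_{L^{10}L^{10}(t_1,t_2)}^4 \|(-\Delta_g)^{1/2} u\|_{L^{10}L^{30/13}(t_1,t_2)}$, which vanishes on account of the global Strichartz bound.

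I see no serious obstacle here. The one place where the curved geometry could in principle have created trouble --- reconciling $(-\Delta_g)^{1/2}$ with the nonlinear and embedding estimates --- has already been handled by Corollaries~\ref{ch4:c:sob_equiv} and~\ref{ch4:c:chain_rule}, so the remainder of the argument is indistinguishable from the Euclidean quintic theory and the author's remark that standard contraction and bootstrap arguments suffice is fully vindicated.
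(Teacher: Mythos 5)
Your proposal is correct and follows essentially the same route as the paper: a contraction mapping argument for the Duhamel map in the space $\{\|(-\Delta_g)^{1/2}u\|_{L^{10}L^{30/13}} \le 2\varepsilon\}$, using Proposition~\ref{ch4:p:str}, Corollary~\ref{ch4:c:chain_rule}, and Corollary~\ref{ch4:c:sob_equiv} to transplant the standard Euclidean quintic bookkeeping to the curved setting. The one cosmetic difference is the contraction metric: you use the derivative-free distance $\|u-v\|_{L^{10}L^{30/13}}$ (the Cazenave--Weissler style choice, which sidesteps applying the Leibniz rule to $u-v$ and makes completeness of the ball follow from lower semicontinuity), whereas the paper contracts in the stronger metric $\|(-\Delta_g)^{1/2}(u-v)\|_{L^{10}L^{30/13}}$ and also carries an auxiliary $L^\infty L^2$ bound in the definition of the space; both are standard variants and yield the same conclusion.
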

	
	\begin{proof}
		Run contraction mapping on the space $X$ defined by the
		conditions
		\begin{align*}
		\| (-\Delta_g)^{1/2} u\|_{L^{10} L^{\fr{30}{13}}} \le 2 \varepsilon, 
		\quad  
		\| (-\Delta_g)^{1/2} u
		\|_{L^\infty L^2}  \le \|u_0\|_{\dot{H}^1} + \varepsilon
		\end{align*}
		equipped with the metric
		$\rho(u, v) = \| (-\Delta_g)^{1/2}(u-v) \|_{L^{10} L^{\fr{30}{13}}}$.  
		For
		each $u \in X$, let $\mcal{I}(u)$ be the solution to the linear equation
		\begin{align*}
		(i\partial_t + \Delta_g) \mcal{I}(u) = |u|^4 u
		\end{align*}
		We check that for $\varepsilon$ sufficiently small, the map $u \mapsto
		\mcal{I}(u)$ is a contraction on $X$. By the Duhamel formula,
		Strichartz, the Leibniz rule, and Sobolev embedding,
		\begin{align*}
		\|(-\Delta_g)^{1/2} \mcal{I}(u) \|_{L^{10} L^{\fr{30}{13}}} &\le \| 
		(-\Delta_g)^{1/2}e^{it\Delta_g} u_0\|_{L^{10}
			L^{\fr{30}{13}}} + c\|(-\Delta_g)^{1/2}(|u|^4u)\|_{L^2 
			L^{\frac{6}{5}}} \\
		&\le \varepsilon +  c\|(-\Delta_g)^{1/2} u\|_{L^{10} L^{\fr{30}{13}}}^5
		\le \varepsilon + c (2\varepsilon)^5,
		\end{align*}
		\begin{align*}
		\| (-\Delta_g)^{1/2} \mcal{I}(u) \|_{L^\infty L^2} \le \|
		(-\Delta_g)^{1/2} u_0 \|_{L^2} + c  (2\varepsilon)^5
		\end{align*}
		Thus $\mcal{I}$ maps $X$ into itself. 
		
		For $u, v \in X$, the difference $\mcal{I}(u) - \mcal{I}(v)$ solves
		the equation with right hand side
		\begin{align*}
		|u|^4 u - |v|^4 v = \bigl( |u|^4  + \overline{u} v ( |u|^2 + |v|^2) 
		\bigr) (u -
		v) + v^2 (|u|^2 + |v|^2) (\overline{u} - \overline{v}).
		\end{align*}
		Hence, applying the Leibniz rule and Sobolev embedding repeatedly,
		\begin{align*}
		&\| (-\Delta_g)^{1/2} [\mcal{I}(u) - \mcal{I}(v)] \|_{L^{10}
			L^{\fr{30}{13}}} \\
		&\lesssim \|(-\Delta_g)^{1/2} (u-v) \|_{L^{10}
			L^{\fr{30}{13}}} ( \|(-\Delta_g)^{1/2}u\|_{L^{10} 
			L^{\frac{30}{13}}}^4 + \| 
		(-\Delta_g)^{1/2}v\|_{L^{10} L^{\frac{30}{13}}}^4 )\\
		&\lesssim (2\varepsilon)^4 \| (-\Delta_g)^{1/2}(u-v)\|_{L^{10} 
		L^{\fr{30}{13}}}.
		\end{align*}
	\end{proof}
	
	\begin{prop}
		\label{ch4:p:stability}
		Let $\tilde{u}$ solve the perturbed equation
		\begin{align}
		(i\partial_t + \Delta_g)\tilde{u} = \tilde{u}^4 \tilde{u} + e,
		\end{align}
		and let $0 \in I$ be an interval such that
		\begin{align*}
		\| \tilde{u} \|_{Z(I)} \le L, \quad \|
		\nabla \tilde{u} \|_{L^\infty L^2} \le E.
		\end{align*}
		Then there exists $\varepsilon_0(E, L)$ such that if $\varepsilon \le
		\varepsilon_0$ and
		\begin{align*}
		\| \tilde{u}(0) - u_0\|_{\dot{H}^1} + \| \nabla e \|_{N(I)}
		\le \varepsilon,
		\end{align*}
		there is a unique solution $u$ to~\eqref{ch4:e:main_eq} on $I$ with 
		$u(0)
		= u_0$, with
		\begin{align*}
		\| u - \tilde{u} \|_{Z(I)} + \| \nabla (u - \tilde{u}) \|_{L^2 L^6 \cap 
			L^\infty L^2}  &\le C(E, L)\varepsilon\\
		\| \nabla u\|_{L^2 L^6 \cap L^\infty L^2(I \times \mf{R}^3)} &\le C(E, 
		L).
		\end{align*}
	\end{prop}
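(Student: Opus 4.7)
The plan is to carry out the standard stability bootstrap, which is available in $d=3$ precisely because the quintic exponent allows us to work entirely with ordinary Strichartz and the chain rule for $(-\Delta_g)^{1/2}$ established in Corollary~\ref{ch4:c:chain_rule}, without resorting to the exotic Strichartz estimates needed in higher dimensions. First I would partition $I$ into finitely many subintervals $I_1,\ldots,I_J$ (with $J=J(L,\eta)$) such that $\|\tilde u\|_{Z(I_j)} \le \eta$ on each $I_j$, where $\eta$ is a small constant to be chosen depending only on the implicit constants in Proposition~\ref{ch4:p:str}. This is possible because $\|\tilde u\|_{Z(I)} \le L$.

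Setting $w = u - \tilde u$, the difference $w$ satisfies
\[
(i\partial_t + \Delta_g)w = \bigl[|\tilde u + w|^4(\tilde u+w) - |\tilde u|^4 \tilde u\bigr] - e,
\]
with $w(0) = u_0 - \tilde u(0)$. On a subinterval $I_j = [t_j, t_{j+1}]$, applying Strichartz to this equation together with the Leibniz rule (via Corollary~\ref{ch4:c:chain_rule}) and Hölder in spacetime gives
\[
\|\nabla w\|_{L^{10}L^{30/13}(I_j)} + \|\nabla w\|_{L^\infty L^2(I_j)}
\le C\|\nabla w(t_j)\|_{L^2} + C\bigl(\eta^4 + \|w\|_{Z(I_j)}^4\bigr)\|\nabla w\|_{L^{10}L^{30/13}(I_j)} + C\|\nabla e\|_{N(I_j)},
\]
where Sobolev embedding controls $\|w\|_{Z(I_j)}$ by $\|\nabla w\|_{L^{10}L^{30/13}(I_j)}$. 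A standard continuity argument then closes the estimate on $I_j$: provided $\eta$ is small enough that $C\eta^4 \le \tfrac14$, if $\|\nabla w(t_j)\|_{L^2} + \|\nabla e\|_{N(I_j)} \le \delta_j$ with $\delta_j$ sufficiently small, we obtain $\|\nabla w\|_{L^{10}L^{30/13}(I_j)} + \|\nabla w\|_{L^\infty L^2(I_j)} \le 2C\delta_j$, which in turn bounds $\|w\|_{Z(I_j)} \le C'\delta_j$.

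Iterating, the error at the start of $I_{j+1}$ grows by a fixed multiplicative constant $K = K(E)$, so after $J$ steps the cumulative bound is of order $K^J \varepsilon$. Choosing $\varepsilon_0(E,L)$ small enough that $K^J \varepsilon_0$ is below the threshold required for the continuity argument to run on every subinterval yields the claimed estimates for $u - \tilde u$; the bound on $\|\nabla u\|_{L^2L^6 \cap L^\infty L^2}$ then follows from the triangle inequality and another application of Strichartz to $\tilde u$. The main bookkeeping issue, rather than a serious obstacle, is verifying that the Leibniz expansion of $\nabla[F(\tilde u + w) - F(\tilde u)]$ (with $F(z)=|z|^4z$) admits factor-by-factor Hölder estimates landing in $L^2 L^{6/5}$; this is where the specific exponent $\tfrac{30}{13}$ in the Strichartz pair of Proposition~\ref{ch4:p:str} is matched to the fifth power in the nonlinearity.
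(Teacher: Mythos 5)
The paper does not give a proof of this proposition; it defers to the standard arguments in the literature (citing \texttt{claynotes}) with the same remark you make about exotic Strichartz estimates being unnecessary for $3\le d\le 6$. Your sketch is therefore the argument the paper has in mind, and its skeleton---partition $I$, derive a recursive estimate for $w=u-\tilde u$ on each subinterval via Strichartz and the chain rule, close by a continuity argument, then iterate---is correct.

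There is, however, one place where your displayed inequality is incomplete in a way that matters for the stated dependence of the constants. When you apply $\nabla$ to $F(\tilde u + w)-F(\tilde u)$ with $F(z)=|z|^4z$, the Leibniz rule produces not only terms proportional to $\nabla w$ but also terms in which the derivative falls on $\tilde u$, schematically $|\tilde u|^{a}|w|^{b}\,\nabla\tilde u$ with $a+b=4$ and $b\ge 1$. Estimating these in $L^2_tL^{6/5}_x$ via H\"older (exponents $\frac{a}{10}+\frac{b}{10}+\frac{1}{10}=\frac12$ in time and $\frac{a}{10}+\frac{b}{10}+\frac{13}{30}=\frac56$ in space) gives contributions of the form $\|\tilde u\|_{Z(I_j)}^{a}\,\|w\|_{Z(I_j)}^{b}\,\|\nabla\tilde u\|_{L^{10}L^{30/13}(I_j)}$. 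These do not appear in your recursion, and after replacing $\|w\|_{Z}$ by $\|\nabla w\|_{L^{10}L^{30/13}}$ via Sobolev the worst of them is $\eta^{3}\,\|\nabla\tilde u\|_{L^{10}L^{30/13}(I_j)}\,\|\nabla w\|_{L^{10}L^{30/13}(I_j)}$. Since $\|\nabla\tilde u\|_{L^{10}L^{30/13}(I_j)}$ is only $O(E)$ under a partition that controls the $Z$-norm, the coefficient of $\|\nabla w\|$ is $O(\eta^4+\eta^3E+\dots)$, so $\eta$ must be chosen small depending on $E$, not merely on the Strichartz constant as you assert. This does not break the argument---$J$ and hence $\varepsilon_0$ were allowed to depend on $E$ anyway---but it is a real gap in the bookkeeping. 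The cleanest fix, and the one most standard treatments use, is to first deduce $\|\nabla\tilde u\|_{L^{10}L^{30/13}(I)}\le C(E,L)$ from Strichartz applied to $\tilde u$ on the $Z$-subintervals, and then partition $I$ a second time so that $\|\nabla\tilde u\|_{L^{10}L^{30/13}(I_j)}\le\eta$ (which by Sobolev also forces $\|\tilde u\|_{Z(I_j)}\lesssim\eta$); with that choice the smallness of $\eta$ is universal and every term in the Leibniz expansion is manifestly absorbable.
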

	

	\section{Convergence of propagators}
	\label{ch4:s:propagator_convergence}
	
	\begin{thm}
		\label{ch4:t:convergence_of_propagators}
		Let $(\lambda_n, x_n)$ be a sequence of length scales and spatial 
		centers 
		conforming to one of the following 
		scenarios:
		\begin{itemize}
			\item [(a)] $\lambda_n \to \infty$.
			\item [(b)] $|x_n| \to \infty$.
			\item [(c)] $x_n \to x_\infty$, $\lambda_n \to 0$.
		\end{itemize}
		Let $\Delta := \delta^{jk} \partial_j \partial_k$ in the first 
		two 
		cases and $\Delta := g^{jk}(x_\infty) \partial_j \partial_k$ in 
		the 
		third.
		Then for any $\phi \in \dot{H}^1$, writing $\phi_n = \lambda_n^{- 
			\frac{d-2}{2}} \phi(\tfr{\cdot - x_n}{\lambda_n})$, we have
		\[
		\lim_{n \to \infty}	\| e^{it\Delta_g} \phi_n - e^{it\Delta} 
		\phi_n\|_{L^\infty L^6 ( \mf{R} \times \mf{R}^3 )} = 0.
		\]
		In cases (a), (b), the convergence actually occurs in $L^\infty 
		\dot{H}^1$.
	\end{thm}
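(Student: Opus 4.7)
The strategy is the same across all three scenarios. Setting $v_n := e^{it\Delta}\phi_n$ and $w_n := e^{it\Delta_g}\phi_n - v_n$, the difference obeys $w_n(0) = 0$ and $(i\partial_t + \Delta_g) w_n = F_n$ where $F_n := (\Delta_g - \Delta) v_n$. Applying Proposition~\ref{ch4:p:str} to $(-\Delta_g)^{1/2} w_n$, together with Corollary~\ref{ch4:c:sob_equiv} and Sobolev embedding, yields
\begin{equation*}
\|w_n\|_{L^\infty L^6} \lesssim \|(-\Delta_g)^{1/2} w_n\|_{L^\infty L^2} \lesssim \|(-\Delta_g)^{1/2} F_n\|_{N(\mf{R})},
\end{equation*}
so it suffices to show the right-hand side tends to zero. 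Uniform Strichartz bounds for both propagators let me approximate $\phi$ in $\dot{H}^1$ and reduce to $\phi \in C_c^\infty(\mf{R}^3)$. Passing to rescaled coordinates $y = (x-x_n)/\lambda_n$, $s = t/\lambda_n^2$, the function $\tilde v(s, y) := \lambda_n^{1/2} v_n(\lambda_n^2 s, \lambda_n y + x_n) = e^{is\Delta}\phi$ is a constant-coefficient Schr\"odinger solution with \emph{fixed} smooth compactly supported data. A direct scaling computation gives, schematically,
\begin{equation*}
\|(-\Delta_g)^{1/2} F_n\|_{L^1 L^2} \lesssim \int \|a_n\, \partial^3 \tilde v(s)\|_{L^2_y}\, ds + \lambda_n \int \|(\partial a_n)\, \partial^2 \tilde v(s)\|_{L^2_y}\, ds,
\end{equation*}
where $a_n(y) := g^{jk}(\lambda_n y + x_n) - \Delta^{jk}$ and $\Delta^{jk}$ are the reference constant coefficients.

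In cases (a) and (b), $\Delta$ is the standard Euclidean Laplacian, so $a_n$ is supported in $\{y : \lambda_n y + x_n \in B_1\}$. In (a), $\lambda_n \to \infty$ shrinks this set in $y$-space to a point; in (b), for large $n$ either $|x_n| > 1$ makes $a_n \equiv 0$ or this set recedes to infinity in $y$. In either case the integrand vanishes pointwise in $s$, and Strichartz/dispersive decay of the Euclidean $e^{is\Delta}\phi$ controls the tails, so the integrals tend to zero. The stronger $L^\infty \dot H^1$ convergence claimed in (a)--(b) follows from the same Strichartz chain because the reference is literally the Euclidean Laplacian.

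Case (c) is the heart. Here $\Delta = g^{jk}(x_\infty)\partial_j\partial_k$ and, by Taylor expansion of $g$ at $x_\infty$, $|a_n(y)| \lesssim \lambda_n |y| + |x_n - x_\infty|$ on any fixed ball in $y$; this tends to zero uniformly by $\lambda_n \to 0$ and $x_n \to x_\infty$. Splitting space at $|y| = R$ and time at $|s| = S$,
\begin{equation*}
\int_{|s|\le S} \|a_n\, \partial^3 \tilde v(s)\|_{L^2_y(|y|\le R)}\, ds \lesssim \bigl(\lambda_n R + |x_n - x_\infty|\bigr)\, S\, \|\phi\|_{H^3},
\end{equation*}
which vanishes for fixed $R, S$ as $n \to \infty$. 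The spatial tail $|y|>R$ vanishes as $R \to \infty$ by integrability of $\partial^3 \tilde v$ over $\{|y|>R\}$ (a consequence of $\phi \in C_c^\infty$ and dispersive decay of the constant-coefficient propagator); the temporal tail $|s|>S$ is handled by the $L^2 L^{6/5}$ piece of $N(\mf{R})$ paired with Strichartz for $e^{is\Delta}$.

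\textbf{Main obstacle.} The delicate step is case (c). Concentration of $\phi_n$ makes $F_n$ involve third derivatives of a high-frequency profile while the $L^1_t$ integration extends over all of $\mf{R}$, so the naive bound is enormous. The rescue is that the rescaled metric perturbation $a_n$ is $o(1)$ uniformly on bounded $y$-sets (thanks to smoothness of $g$ plus $\lambda_n \to 0$, $x_n \to x_\infty$), which precisely cancels the adverse $\lambda_n^{-1}$ scaling from the derivatives; integrability of the fixed reference flow $\tilde v = e^{is\Delta}\phi$ then closes the estimate. Cases (a) and (b) are strictly easier because outside a fixed compact set, $g$ \emph{coincides} with the reference.
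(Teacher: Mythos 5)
Your proposal hinges on showing that the full Duhamel error $\|(-\Delta_g)^{1/2}(\Delta_g-\Delta)v_n\|_{N(\mathbf{R})}$ tends to zero across all three cases. This works in case (a), and with some care in case (b), but it has a genuine and unrecoverable gap in case (c): the error term does \emph{not} vanish, and the two evolutions cannot be shown to stay close to one another for all time.

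To see the problem concretely, note that in unrescaled coordinates the dominant contribution is $\|\chi \nabla^3 v_n\|_{L^1_t L^2_x + L^2_t L^{6/5}_x}$ with $\chi$ a bump on the unit ball. Since $\phi_n$ concentrates, $\|\nabla^3\phi_n\|_{L^2}\sim \lambda_n^{-2}\to\infty$, and neither the $L^1L^2$ nor $L^2L^{6/5}$ pieces of this quantity go to zero as $n\to\infty$; the dispersive decay of $\tilde v=e^{is\Delta}\phi$ cannot save you because the unit ball, expressed in rescaled coordinates, becomes a ball of radius $\lambda_n^{-1}$, and the free solution $\tilde v(s)$ is concentrated precisely on $|y|\sim |s|$, so for $|s|\gtrsim\lambda_n^{-1}$ the solution sits exactly where $a_n$ is $O(1)$. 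Your smallness bound $|a_n(y)|\lesssim\lambda_n|y|+|x_n-x_\infty|$ on a \emph{fixed} ball is correct but irrelevant outside such a ball, and $\|a_n\|_{L^{3/2}_y}\sim\lambda_n^{-2}$ blows up, so the tail contributions to $N(\mathbf{R})$ do not go to zero as $R,S\to\infty$ uniformly in $n$. More fundamentally: a highly concentrated initial datum spreads out and genuinely sees the curvature at times $t\gtrsim\lambda_n^2$, and at that point the two flows really do diverge from each other; there is no reason for $e^{it\Delta_g}\phi_n - e^{it\Delta}\phi_n$ to stay small in $\dot H^1$, and in fact it doesn't.

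What the paper actually does in case (c) is split the time axis at $|t|=T\lambda_n^2$. For $|t|\le T\lambda_n^2$ (the ``Euclidean window''), it performs essentially the Duhamel comparison you describe, and there your $o(1)$ bound on $a_n$ is exactly the right observation (this is Corollary~\ref{ch4:c:concentrating_profile_strong_conv}). For $|t|>T\lambda_n^2$ the paper abandons the comparison entirely and instead shows that \emph{both} flows have already become small in $L^6$: for $e^{it\Delta}\phi_n$ this is the standard Euclidean dispersive estimate, but for $e^{it\Delta_g}\phi_n$ this is precisely the extinction lemma (Proposition~\ref{ch4:p:extinction}), which is the main technical contribution of the paper and occupies all of Section~\ref{ch4:s:extinction}. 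The extinction lemma cannot be avoided, because on a curved background the dispersive estimate genuinely fails (due to refocusing of geodesics), and one has to work much harder---semiclassical parametrices, a wavepacket decomposition, and a measure estimate on preimages of the exponential map---to extract any $L^6$ decay at all. Your proposal misses this essential input. As a lesser issue, in case (b) with $\lambda_n\to 0$ simultaneously, your statement that ``the integrals tend to zero'' also needs a nontrivial two-regime decomposition (the paper splits at $|t|=T\lambda_n$ and uses the Fraunhofer formula); that is however an argument within the circle of standard Euclidean estimates, unlike case (c).
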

	
	\begin{proof}
		By approximation in $\dot{H}^1$, we may assume that $\phi$ is Schwartz.
		
		Suppose first that $\lambda_n \to \infty$. By the Strichartz inequality 
		the 
		equivalence of Sobolev norms, and the Leibniz rule,
		\begin{align*}
		\| e^{it\Delta_g} \phi_n - e^{it\Delta} \phi_n \|_{L^\infty L^6} 
		&\lesssim 
		(-\Delta_g)^{1/2} (\Delta_g - \Delta)e^{it\Delta} \phi_n 
		\|_{L^2 
		L^{6/5}} \\
		&\lesssim \| \chi \nabla e^{it\Delta} \phi_n\|_{L^{2} L^{6/5}} + 
		\| 
		\chi 
		\nabla^2 e^{it\Delta} \phi_n\|_{L^2 L^{6/5}} + \| \chi \nabla^3 
		e^{it\Delta} 
		\phi_n 
		\|_{L^2 L^{6/5}}
		\end{align*}
		where $\chi(x)$ is the characteristic function of the unit ball. By 
		H\"{o}lder 
		and the Euclidean dispersive estimate,
		\begin{align*}
		\| \chi \nabla e^{it\Delta} \phi_n \|_{L^2 L^{6/5}} = 
		\lambda_n^2 \| 
		\chi(\lambda_n 
		\cdot) e^{it\Delta} \phi\|_{L^2 L^{6/5}} \lesssim 
		\lambda_n^{-\frac{1}{2}} \| 
		e^{it\Delta} \phi \|_{L^2 L^\infty}  \lesssim \lambda_n^{-1/2} 
		\| 
		\phi\|_{L^1}.
		\end{align*}
		The terms involving two or more derivatives enjoy even better decay 
		since 
		$\lambda_n \to \infty$.
		
		Assume now that $|x_n| \to \infty$, $\lambda_n \equiv \lambda_0\in (0, 
		\infty)$. By the Duhamel formula and Sobolev embedding, 
		\begin{align*}
		\| e^{it\Delta_g} \phi_n - e^{it\Delta} \phi_n \|_{L^\infty L^2} 
		\lesssim \| 
		(\Delta_g - \Delta) e^{it\Delta} \phi_n \|_{L^1 L^2} 
		\lesssim \| \chi 
		e^{it\Delta} \nabla \phi_n \|_{L^1 L^2} + \| \chi 
		e^{it\Delta} 
		\nabla^2 \phi_n 
		\|_{L^1 
			L^2},
		\end{align*}	
		where $\chi$ is a bump function supported on the unit ball. For any 
		fixed $T > 
		0$, 
		decompose
		\begin{align*}
		\| \chi e^{it\Delta} \nabla \phi_n \|_{L^1 L^2} \le \| \chi_n 
		e^{it\Delta} 
		\phi 
		\|_{L^1L^2( \{ |t| \le T\}  )} + \| \chi_n e^{it\Delta} 
		\phi\|_{L^1 
		L^2( \{ |t| 
			> T\})},
		\end{align*}
		where $\chi_n = \chi(\cdot + x_n)$. The first term vanishes as $n \to 
		\infty$ 
		because the orbit $\{ e^{it\Delta} \phi\}_{|t| \le T}$ is 
		compact in 
		$L^2$. We 
		use H\"{o}lder's inequality and the dispersive estimate to bound the 
		second 
		term by
		\begin{align*}
		\| e^{it\Delta} \phi\|_{L^1 L^\infty(\{ |t| > T\})} \lesssim 
		T^{-\frac{1}{2}} 
		\| \phi\|_{L^1}.
		\end{align*}
		As $T$ may be chosen arbitrarily large, we conclude that $\lim_{n \to 
		\infty}\| 
		\chi e^{it\Delta} \nabla \phi_n \|_{L^1 L^2} = 0$, and similar 
		considerations 
		estimate the term $\| \chi e^{it\Delta} \nabla^2 \phi_n \|_{L^1 
		L^2}$. 
		Finally, we have
		\begin{align*}
		\| e^{it\Delta_g} \phi_n - e^{it\Delta} \phi_n \|_{L^\infty L^6} 
		\le \| 
		\cdots 
		\|_{L^\infty L^2}^{\frac{1}{3}} \| \cdots \|_{L^\infty 
		L^\infty}^{\frac{2}{3}},
		\end{align*}
		and the uniform norms may be estimated via Sobolev embedding:
		\begin{align*}
		\| e^{it\Delta_g} \phi_n\|_{L^\infty L^\infty} \lesssim \| (1 - 
		\Delta_g) 
		e^{it\Delta_g} \phi_n \|_{L^\infty L^2} \lesssim \| (1 - \Delta_g) 
		\phi_n 
		\|_{L^2} \lesssim 1.
		\end{align*}
		
		Consider now the scenario where $|x_n| \to \infty$ and $\lambda_n \to 
		0$. We 
		may assume that $\phi$ is compactly supported. Let $\chi$ be a smooth 
		function 
		such that $\chi(x) = 1 $ when $|x| \ge 11/10$ and 
		$\chi(x) = 0$ for $|x| \le 1$. First we show
		\begin{align}
		\label{ch4:e:cutoff_negligible}
		\lim_{n \to \infty} \| (1 - \chi) e^{it\Delta} \phi_n 
		\|_{L^\infty L^6} 
		= 0.
		\end{align}
		The function $\chi e^{it\Delta} \phi_n$ solves the equation
		\begin{align*}
		(i\partial_t + \Delta) (\chi e^{it\Delta} \phi_n) = [\chi, 
		\Delta] 
		e^{it\Delta} 
		\phi_n.
		\end{align*}
		Thus, by Sobolev embedding and the Duhamel formula,
		\begin{align*}
		\| (1-\chi)e^{it\Delta} \phi_n \|_{L^\infty L^6} \lesssim \| 
		\nabla 
		[\chi, 
		\Delta] e^{it\Delta} \phi_n \|_{L^1 L^2} .
		\end{align*}
		The right side has the form
		\begin{align*}
		\| \beta e^{it\Delta} \nabla \phi_n \|_{L^1 L^2} + \| \beta 
		e^{it\Delta} \nabla^2 
		\phi_n 
		\|_{L^1 L^2}
		\end{align*}
		where $\beta$ is a bump function localizing to the unit ball. We focus 
		on the 
		potentially more dangerous second term. Fix $T > 0$ large, and split
		\begin{align*}
		\|\chi e^{it\Delta} \nabla^2 \phi_n\|_{L^1 L^2} \le \| \beta 
		e^{it\Delta} 
		\nabla^2\phi_n 
		\|_{L^1 L^2( \{ |t| \le T \lambda_n\} )} + \| \beta e^{it\Delta} 
		\nabla^2 
		\phi_n 
		\|_{L^1 
			L^2 ( \{ |t| > T \lambda_n \})}.
		\end{align*}
		By H\"{o}lder in time and a change of variable, the first term may be 
		written as
		\begin{align*}
		\|\beta(x_n + \lambda_n \cdot) e^{it\Delta} \nabla^2 \phi 
		\|_{L^\infty L^2( 
		\{ |t| 
			\le T 
			\lambda_n^{-1} \})},
		\end{align*}
		which goes to zero as $n \to \infty$ by approximate finite speed of 
		propagation or, more precisely, by the Fraunhofer formula
		\begin{align*}
		\lim_{t \to \infty} \| e^{it\Delta} f - (2 i t)^{-\frac{3}{2}} 
		\hat{f}(\tfrac{x}{2t}) e^{\frac{|x|^2}{4t}} \|_{L^2} = 0.
		\end{align*}
		By the dispersive estimate,
		\begin{align*}
		\| \beta e^{it\Delta} \nabla^2 \phi_n \|_{L^1 L^2 ( \{ |t| > T 
		\lambda_n 
		\})} 
		\lesssim \| e^{it\Delta} \nabla^2 \phi_n \|_{L^1 L^\infty ( \{ 
		|t| > 
		T 
		\lambda_n 
			\})} \lesssim \lambda_n^{\frac{1}{2}} (T \lambda_n)^{-\frac{1}{2}} 
			\| 
		\phi\|_{L^1} \lesssim T^{-\frac{1}{2}}.
		\end{align*}
		Hence, choosing $T$ arbitrarily large, 
		\[
		\lim_{n \to \infty} \| \beta e^{it\Delta} \nabla^2 \phi_n\|_{L^1 
		L^2} 
		= 0,
		\]
		establishing~\eqref{ch4:e:cutoff_negligible}.
		
		Since $\Delta = \Delta_g$ on the support of the cutoff $\chi$, we also 
		have
		\begin{align*}
		(i\partial_t + \Delta_g) (\chi e^{it\Delta} \phi_n) = [ \chi, 
		\Delta] 
		e^{it\Delta} \phi_n,
		\end{align*}
		so by the Duhamel formula, Sobolev embedding, and the equivalence of 
		$\dot{H}^1$ Sobolev norms,
		\begin{align*}
		\| e^{it\Delta_g} \phi_n - \chi e^{it\Delta} \phi_n \|_{L^\infty 
		L^6} 
		&= 
		\Bigl \| \int_0^t e^{i(t-s)\Delta_g} [\Delta, \chi] e^{is 
		\Delta} 
		\phi_n \, 
		ds \|_{L^\infty L^6} \lesssim \| (-\Delta_g)^{1/2} [\chi, \Delta] 
		e^{it\Delta} \phi_n\|_{L^1 L^2} \\
		&\lesssim \| \nabla [\chi, \Delta] 
		e^{it\Delta} \phi_n\|_{L^1 L^2}
		\end{align*}
		which was just estimated.
		
		Finally, consider the last case where the profile $\phi_n$ is 
		concentrating 
		at a point. For $T > 0$, split
		\begin{align}
		\label{ch4:e:propagator_conv_concentrating_profile}
		\| e^{it\Delta_g} \phi_n - e^{it\Delta} \phi_n \|_{L^\infty L^6} 
		\le 
		\| \cdots \|_{L^\infty L^6 ( \{ |t| \le T \lambda_n^2 \})} + \| \cdots 
		\|_{L^\infty L^6 ( \{ |t| > T \lambda_n^2\})}.
		\end{align}
		For the short time contribution, let $\chi$ be a bump function centered 
		at 
		the origin, fix $0 < \theta <1$, and define
		\begin{align*}
		\chi_n = \chi\Bigl(\frac{\cdot - x_n}{\lambda_n^\theta} \Bigr), \ v_n = 
		e^{it\Delta} \phi_n.
		\end{align*}
		Then
		\begin{align*}
		(i\partial_t + \Delta)(\chi_n v_n) = [ \Delta, \chi_n] 
		v_n = 
		2 
		\langle \nabla_\infty \chi_n, \nabla v_n \rangle_\infty  + 
		(\Delta 
		\chi_n)v_n,
		\end{align*}
		where the inner product on the right is respect to the metric 
		$g(x_\infty)$, 
		hence
		\begin{align*}
		\| (1 - \chi_n)v_n \|_{L^\infty L^6( \{ |t| \le T \lambda_n^2 \})} 
		&\lesssim \| 
		(1 - \chi_n) v_n\|_{\dot{H}^1} + \| \nabla [\Delta, \chi_n] 
		v_n\|_{L^1 
			L^2 (\{|t| \le T \lambda_n^2\})}\\
		&\lesssim o(1) + T \lambda_n \| \phi\|_{H^2}.
		\end{align*}
		Further, writing $(i\partial_t + \Delta) = (i\partial_t + 
		\Delta_g) 
		+ 
		(\Delta - \Delta_g)$, we obtain by the Duhamel formula and 
		Sobolev 
		embedding
		\begin{align*}
		\| e^{it\Delta_g} \phi_n - \chi_n e^{it\Delta} \phi_n 
		\|_{L^\infty L^6( 
		\{ 
			|t| 
			\le T \lambda_n^2\} )} &\lesssim \| (1 - \chi_n) 
			\phi_n\|_{\dot{H}^1} + 
		\| 
		\nabla [\Delta, \chi_n] v_n \|_{L^1 L^2 ( \{ |t| \le T 
			\lambda_n^2\})} \\
		&+ 
		\| \nabla (\Delta_g - \Delta) (\chi_n  v_n) \|_{L^1 L^2 ( \{ |t| 
		\le 
			T 
			\lambda_n^2 \})}.
		\end{align*}
		The first two terms were estimated before. Writing out $\Delta_g - 
		\Delta$ explicitly and using the Leibniz rule, we see that the 
		worst 
		contributions to 
		the last term are quantities of the form
		\begin{align*}
		\| (g - g(x_\infty)) \chi_n \nabla^3 v_n\|_{L^1 L^2( \{ |t| \le 
		T\lambda_n^2 
			\})} 
		&\lesssim T\lambda_n^2 \lambda_n^{-2} (|x_n - x_\infty| + 
		\lambda_n^\theta) 
		\| e^{it\Delta} \nabla^3 \phi\|_{L^\infty L^2} ,
		\end{align*}
		which is acceptable.
		
		The long time contribution 
		to~\eqref{ch4:e:propagator_conv_concentrating_profile} is bounded by
		\[
		\| e^{it\Delta_g} \phi_n\|_{L^\infty L^6 ( \{|t| > T\lambda_n^2 \})} + 
		\| 
		e^{it\Delta} \phi_n \|_{L^\infty L^6 ( \{ |t| > T \lambda_n^2 
		\})},
		\]
		which are dealt with respectively by the extinction lemma in the next 
		section and the usual dispersive estimate
		\begin{align*}
		\| e^{it\Delta} \phi_n \|_{L^\infty L^6 ( \{|t| > T\lambda_n^2\} 
		)} 
		\lesssim T^{-1} \| \phi\|_{L^{6/5}}.
		\end{align*}
	\end{proof}
	
	The proof of the last case yields the following corollary, which asserts 
	that 
	on short time intervals, the convergence in Case (c) of the theorem occurs 
	in 
	the energy norm as well.
	\begin{cor}
		\label{ch4:c:concentrating_profile_strong_conv}
		Let $(\lambda_n, x_n)$ be a sequence such that $x_n \to x_\infty$ and 
		$\lambda_n\to 0$. Then for any $T > 0$
		\begin{align*}
		\lim_{n \to \infty} \| e^{it\Delta_g} \phi_n - e^{it\Delta} 
		\phi_n\|_{L^\infty \dot{H}^1 ( [-T\lambda_n^2, T \lambda_n^2] \times 
			\mf{R}^3 )} = 0.
		\end{align*}
	\end{cor}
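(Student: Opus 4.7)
The plan is to observe that the proof of Case (c) of Theorem~\ref{ch4:t:convergence_of_propagators}, when restricted to the short time window $|t| \le T\lambda_n^2$, already produces the desired $L^\infty \dot{H}^1$ bound; the passage to $L^\infty L^6$ there is made via Sobolev embedding as the very last step, which we simply avoid.

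More concretely, I would fix $T > 0$, set $v_n = e^{it\Delta}\phi_n$, and re-use the cutoff $\chi_n(x) = \chi((x-x_n)/\lambda_n^\theta)$ with $0 < \theta < 1$. The triangle inequality gives
\[
\| e^{it\Delta_g} \phi_n - e^{it\Delta} \phi_n \|_{L^\infty \dot{H}^1}
\le \| e^{it\Delta_g}\phi_n - \chi_n v_n \|_{L^\infty \dot{H}^1} + \|(1-\chi_n) v_n \|_{L^\infty \dot{H}^1},
\]
taken on $I_n := [-T\lambda_n^2, T\lambda_n^2]$. For the first piece, $\chi_n v_n$ satisfies
\[
(i\partial_t + \Delta_g)(\chi_n v_n) = [\Delta, \chi_n] v_n + (\Delta_g - \Delta)(\chi_n v_n),
\]
and since $(-\Delta_g)^{1/2}$ commutes with $e^{it\Delta_g}$ and $e^{it\Delta_g}$ is unitary on $L^2$, Duhamel together with Corollary~\ref{ch4:c:sob_equiv} yields
\[
\| e^{it\Delta_g}\phi_n - \chi_n v_n \|_{L^\infty \dot{H}^1(I_n)}
\lesssim \|(1-\chi_n)\phi_n\|_{\dot{H}^1} + \|\nabla [\Delta, \chi_n] v_n\|_{L^1 L^2(I_n)} + \|\nabla (\Delta_g - \Delta)(\chi_n v_n)\|_{L^1 L^2(I_n)}.
\]
For the second piece, $(1-\chi_n) v_n$ satisfies $(i\partial_t + \Delta)((1-\chi_n) v_n) = -[\Delta, \chi_n] v_n$, and the same energy estimate (now for $e^{it\Delta}$, which is unitary on $\dot{H}^1(\delta)$) gives
\[
\|(1-\chi_n) v_n\|_{L^\infty \dot{H}^1(I_n)} \lesssim \|(1-\chi_n)\phi_n\|_{\dot{H}^1} + \|\nabla [\Delta, \chi_n] v_n\|_{L^1 L^2(I_n)}.
\]

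Each of these terms has already been estimated in the proof of the theorem: the data term $\|(1-\chi_n)\phi_n\|_{\dot{H}^1}$ is zero for large $n$ since $\phi$ is compactly supported and $\lambda_n \ll \lambda_n^\theta$; the commutator term contributes $O(T\lambda_n^{1-\theta})$ from the worst piece $\nabla\chi_n \cdot \nabla^2 v_n$; and the metric-defect term contributes $O(T(|x_n - x_\infty| + \lambda_n^\theta))$. All three go to zero as $n \to \infty$ with $T$ fixed, establishing the corollary.

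There is essentially no obstacle here beyond bookkeeping: the only point that requires attention is confirming that the $\dot{H}^1$ energy estimate (as opposed to $L^6$ via Sobolev) is genuinely available, which is the case since $\Delta_g$ is self-adjoint on $\dot{H}^1(g)$ and the $\dot{H}^1(g)$ and $\dot{H}^1(\delta)$ norms are equivalent. Note also that, in contrast to the theorem, we do not need the extinction lemma at all — the long-time regime $|t| > T\lambda_n^2$ is excluded by hypothesis.
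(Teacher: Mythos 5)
Your proof is correct and is precisely what the paper intends: the paper's proof of Corollary~\ref{ch4:c:concentrating_profile_strong_conv} is just the observation that the Case (c) estimates in the proof of Theorem~\ref{ch4:t:convergence_of_propagators} are already $\dot{H}^1$ bounds on the window $|t|\le T\lambda_n^2$, with Sobolev embedding applied only as a final step to descend to $L^6$. Your elaboration (Duhamel for $e^{it\Delta_g}\phi_n-\chi_n v_n$ with the two error terms, an energy estimate for $(1-\chi_n)v_n$, and the observation that the initial-data, commutator, and metric-defect contributions are all $o(1)$) matches the paper's reasoning exactly; the only negligible discrepancy is that the paper reduces to Schwartz rather than compactly supported $\phi$, so the data term is $o(1)$ rather than eventually zero.
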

	
	\section{An extinction lemma}
	\label{ch4:s:extinction}
	
	The purpose of this section is to prove a long-time weak dispersion 
	estimate 
	for 
	linear profiles 
	concentrating within a bounded distance of the origin, which arise in the 
	last 
	case of Theorem~\ref{ch4:t:convergence_of_propagators}.  For profiles with 
	width $h$, we want to establish decay for times $t \ge Th^2$ 
	as 
	$h \to 0$ and $T \to \infty$. We consider the times $t \le O(h)$ 
	and $t \gg h$ separately. Semiclassical techniques are used for the first 
	regime, while for longer times we invoke the global 
	geometry 
	to see that the 
	solution is essentially 
	Euclidean. Our ingredients consist of 
	the frequency-localized dispersion estimate of 
	Burq-Gerard-Tzvetkov~\cite{BurqGerardTzvetkov2004}, a 
	wavepacket parametrix, and a non-concentration estimate for the geodesic 
	flow.


	
	\begin{prop}
		\label{ch4:p:extinction}
		Let $d \ge 3$, and suppose $x_h \to x_0 \in \mf{R}^d$ as $h \to 0$. For 
		any $\phi \in 
		\dot{H}^1$, 
		denoting $\phi_h = h^{-\frac{d-2}{2}} \phi(h^{-1}(\cdot - x_h))$, we 
		have
		\begin{align*}
		\lim_{T \to \infty} \limsup_{h \to 0} \| e^{it\Delta_g}
		\phi_h\|_{L^\infty L^{\frac{2d}{d-2}} ( ( [Th^2, \infty) \times 
		\mf{R}^d )} = 
		0.
		\end{align*}
	\end{prop}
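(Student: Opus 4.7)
The plan is to split the time window $[Th^2, \infty)$ into a semiclassical regime $[Th^2, c_0 h]$ and a long-time regime $[c_0 h, \infty)$ for a small constant $c_0$, as hinted in the introduction. By density and the Littlewood-Paley theory of Section~\ref{ch4:s:preliminaries}, I would first reduce to the case where $\phi$ is Schwartz with $\hat\phi$ supported in a fixed dyadic annulus $\{|\xi|\sim 1\}$, so that $\phi_h$ is essentially spectrally localized at frequency $h^{-1}$, concentrated in position at scale $h$ near $x_h$, and spread in momentum at scale $h^{-1}$.

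On the semiclassical range $Th^2 \le t \le c_0 h$, I would invoke the Burq-Gerard-Tzvetkov frequency-localized dispersive estimate $\|e^{it\Delta_g} P_{\sim h^{-1}}\|_{L^1 \to L^\infty} \lesssim |t|^{-d/2}$, which is valid precisely on this time scale. Since $\|\phi_h\|_{L^1} \sim h^{(d+2)/2}$ and $\|\phi_h\|_{L^2} \sim h$, interpolation yields
\[
\|e^{it\Delta_g}\phi_h\|_{L^{\frac{2d}{d-2}}}
\lesssim \|\phi_h\|_{L^2}^{\frac{d-2}{d}} \|e^{it\Delta_g}\phi_h\|_{L^\infty}^{\frac{2}{d}}
\lesssim h^2 |t|^{-1} \lesssim T^{-1}
\]
uniformly on this range, so this contribution is $o(1)$ as $T \to \infty$.

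For $t \ge c_0 h$, the wavepackets composing $\phi_h$ have propagated at group velocity $\sim h^{-1}$ over distance $\gtrsim 1$, so each is either still in the curved region or has exited to the Euclidean exterior. I would construct a gaussian wavepacket parametrix
\[
e^{it\Delta_g}\phi_h(x) \approx \int a(\xi)\, \Psi_{(x_h,\xi,t)}(x)\, d\xi,
\]
where each $\Psi_{(x_h,\xi,t)}$ is a wavepacket transported along the bicharacteristic issued from $(x_h,\xi)$ and $a(\xi)$ encodes the momentum distribution of $\phi$ at scale $h^{-1}$. Pointwise control of the superposition then reduces to a non-concentration estimate for the geodesic flow: for any $x$, the measure of momenta $|\xi|\sim h^{-1}$ whose geodesic from $x_h$ passes within a wavepacket width of $x$ at time $t$ is bounded by the Jacobian of the exponential map. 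The nontrapping hypothesis ensures that each geodesic exits the unit ball in time $O(1)$, and the Hassell-Wunsch analysis shows that conjugate points occupy a small set on each sphere of directions, so refocusing cannot reassemble more than a vanishing fraction of the total mass at any single point. Once a wavepacket has escaped into the Euclidean region, it evolves freely and the standard dispersive estimate handles it. Interpolating the resulting $L^\infty$ bound with the conserved $L^2$ norm then gives $\|e^{it\Delta_g}\phi_h\|_{L^{2d/(d-2)}} \to 0$ as $h\to 0$, uniformly in $t \ge c_0 h$.

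The main obstacle is the long-time regime, since BGT fails for $t \gg h$ and genuine geodesic refocusing would in general destroy any pointwise dispersive bound. The wavepacket parametrix must be constructed with error estimates that survive refocusing, and the non-concentration estimate must be both quantitative and uniform in time. This is precisely where the uncertainty principle pays off: because $\phi_h$ is concentrated at scale $h$ in position, its momentum is spread over a ball of radius $\sim h^{-1}$, and the wavepackets therefore propagate in genuinely diverse directions, so only a small fraction can refocus at any given space-time point.
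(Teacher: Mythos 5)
Your overall strategy matches the paper's: use the Burq-Gerard-Tzvetkov dispersive estimate on $[Th^2, ch]$ (your interpolation of $L^1\to L^\infty$ with $L^2\to L^2$ is equivalent to the paper's direct $L^{2d/(d+2)}\to L^{2d/(d-2)}$ estimate), then switch to a wavepacket decomposition and a non-concentration estimate for the geodesic flow for longer times. The semiclassical-regime bound is correct as you wrote it. However, your justification for the long-time non-concentration estimate contains a genuine gap. You assert that the measure of momenta mapping near $x$ under the exponential map is ``bounded by the Jacobian of the exponential map'' and that ``the Hassell-Wunsch analysis shows that conjugate points occupy a small set on each sphere of directions.'' Both claims are problematic. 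A Jacobian bound on preimage measure requires the Jacobian to be bounded \emph{below}, which is exactly what fails at conjugate points; and Hassell-Wunsch does not assert (nor is it true in general) that conjugate directions are measure-theoretically small. Indeed, the paper's Remark (3) points out that the bound obtained is \emph{saturated} when there are conjugate points of maximal order $d-1$ --- that is, the argument has to survive the worst possible refocusing, not rule it out.

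The mechanism the paper actually uses (Lemma~\ref{ch4:l:exp_preimage_estimate}) is weaker but robust: the map $\xi \mapsto x^1(x,\xi)$ always has a nontrivial \emph{radial} derivative, $|\partial_\xi x^1(x,\xi)\,\xi| \gtrsim |\xi|$, a consequence of the conservation of $g^{jk}\xi_j\xi_k$ along the flow. Hence the preimage of a ball of radius $r$ meets each ray in a set of length $O(r)$, giving $m(\{\xi : |x^1(x,\xi)-z| \le r\}) = O(r)$ --- \emph{not} $O(r^d)$, since there is no transverse control --- and that codimension-one bound is precisely what propagates into the weaker decay rate $h^{1/d - \varepsilon}$ the paper obtains. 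Your proposal also passes over the operator-level preparations needed to make the wavepacket parametrix error estimates close: the paper conjugates $\Delta_g$ by $|g|^{1/4}$ so that the Weyl quantization error is zeroth order, and replaces $A$ with a frequency-localized $A'(h)$ so that the Koch--Tataru wavepacket evolution result applies. Without those steps the parametrix remainder is not controllable.
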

	
	\begin{proof}
		We begin with several reductions. After a translation we may assume 
		that $x_0 = 
		0$. Also, letting $\rho = |g|^{\frac{1}{4}}$ be the 
		square 
		root of the Riemannian 
		density, we have  $e^{it\Delta_g} = \rho^{-1} e^{-itA} 
		\rho$, where 
		\begin{align*}
		A = \rho (-\Delta_g) \rho^{-1} = -\partial_{j} g^{jk} \partial_k + V
		\end{align*}
		is self-adjoint on $L^2( dx)$ and $V$ is a compactly supported 
		potential. 
		Thus
		\begin{align*}
		\| e^{it\Delta_g} \phi_h \|_{L^\infty L^{\frac{2d}{d-2}}}  &\lesssim \| 
		e^{-itA} 
		\rho \phi_h \|_{L^{\frac{2d}{d-2}}} \lesssim \rho(x_h)\| e^{-itA} 
		\phi_h \|_{L^\infty L^{\frac{2d}{d-2}}} + \| e^{-itA}( \rho - 
		\rho(x_h)) \phi_h \|_{L^\infty L^{\frac{2d}{d-2}}}\\
		&\lesssim \| e^{-itA} \phi_h \|_{L^\infty L^{\frac{2d}{d-2}}} + o(1) 
		\textrm{ 
			as } h \to 0,
		\end{align*}
		and it suffices to show
		\begin{align}
		\label{ch4:e:conjugated_extinction}
		\lim_{T \to \infty} \limsup_{h \to 0} \| e^{-itA} \phi_h\|_{L^\infty 
			L^{\frac{2d}{d-2}} ( 
			[Th^2, \infty) \times \mf{R}^d)} = 0.
		\end{align}
		Compared to the Laplacian, the conjugated operator $A$ leads to better 
		error terms 
		when we later consider the dynamics driven by the principal 
		symbol $a(x, \xi) = g^{jk} \xi_j \xi_k$; the Weyl 
		quantization
		$a^w(X, D)$
		 differs from $A$ by a zero order potential whereas $a^w(X, 
		D) + \Delta_g$ is first order.

		Further, we shall assume that $\phi$ is Schwartz and that its Fourier 
		transform $\hat{\phi}$ is supported in a frequency annulus
		\begin{align}
		\label{ch4:e:freq_localization}
		\opn{supp} \hat{\phi} \subset \{ \varepsilon < |\xi| < \varepsilon^{-1} 
		\}
		\end{align} 
		for some $\varepsilon > 0$; the rescaled initial data $\phi_h$ are 
		therefore 
		frequency-localized to the region $\{ 
		h^{-1} 
		\varepsilon < |\xi| < h^{-1} \varepsilon^{-1} \}$. 
		
		By the semiclassical dispersion estimate of 
		Burq-Gerard-Tzvetkov~\cite[Lemma 
		A3]{BurqGerardTzvetkov2004} (see also \cite[Proposition 
		4.7]{KochTataru2005}), there 
		exists $c > 0$ such that
		\begin{align*}
		\| e^{-itA} \phi_h \|_{L^\infty L^{\frac{2d}{d-2}} ([Th^2, ch] \times 
			\mf{R}^d)} \lesssim 
		|Th^2|^{-1} \| \phi_h\|_{L^{\frac{2d}{d+2}}} = T^{-1} \| 
		\phi\|_{L^\frac{2d}{d+2}}.
		\end{align*}
		Hence, it remains to prove the long-time extinction
		\begin{align}
		\label{ch4:e:long_t_extinction}
		\lim_{h \to 0} \| e^{-itA} \phi_h\|_{L^\infty L^{\frac{2d}{d-2}} ([ch, 
		\infty) 
			\times 
			\mf{R}^d)} = 0.
		\end{align}
		
		\subsection*{Wavepacket decomposition}
		
For 
		each $h > 
		0$ and $(x_0, \xi_0)$, define
		\begin{align*}
		\psi_{(x_0, \xi_0)}^h(y) = 2^{-\frac{d}{2}} \pi^{-\frac{3d}{4}} 
		h^{-\frac{3d}{4}} 
		e^{\frac{i\xi_0(y-x_0)}{h}} e^{-\frac{(y-x_0)^2}{2h}},
		\end{align*}
		which is a Gaussian wavepacket localized in phase space to the box 
		\begin{align*}
		\{ (x, \xi) : |x-x_0| \le h^{1/2}, \ |\xi - h^{-1} \xi_0| \le h^{-1/2} 
		\}.
		\end{align*}
		The FBI transform at scale $h$ (see for 
		example~\cite{StaffilaniTataru2002} and the references therein) is an 
		isometry $T_h: L^2(\mf{R}^d) 
		\to L^2(\mf{R}^d \times \mf{R}^d)$ defined by 
		\begin{align*}
		T_h f (x, \xi) = \langle \psi_{(x, \xi)}^h, f \rangle = c_d 
		h^{-\frac{3d}{4}} 
		\int e^{ \frac{i \xi(x-y)}{h}} e^{-\frac{(x-y)^2}{2h}} f(y) \, dy = c_d 
		h^{-\frac{5d}{4}} \int e^{\frac{ix\eta}{h}} e^{-\frac{(\xi - 
		\eta)^2}{2h} }
		\hat{f} (\tfrac{\eta}{h}) \, d\eta.
		\end{align*}
		From the adjoint formula $T_h^* F(y) = \int \psi_{(x, \xi)}^h (y) F(x, 
		\xi) \, 
		dx d\xi$, one obtains for each $f \in L^2(\mf{R}^d)$ a 
		representation
		\begin{align*}
		f = T_h^* T_h f = \int \langle \psi_{(x, \xi)}^h, f \rangle \psi_{(x, 
		\xi)}^h 
		\, dx d\xi.
		\end{align*}
		as a continuous superposition of width $h^{1/2}$ wavepackets.
		Such a decomposition is useful for studying semiclassical 
		Schr\"{o}dinger 
		dynamics as the Schr\"{o}dinger evolution of each wavepacket 
		$\psi_{(x_0, 
			\xi_0)}^h$ will remain coherent and behave essentially as a 
			classical particle 
		on time scales of order $h$.
		
		Returning to our problem, write
		\begin{align*}
		\phi_h = \int \psi_{(x, \xi)}^h T_h \phi_h(x, \xi) \, dx d\xi.
		\end{align*}
		We may restrict attention to just the wavepackets from the region
		\begin{align}
		\label{ch4:e:fbi_support}
		B = \{ (x, \xi) : |x-x_h| \le h^\theta, \ \tfrac{\varepsilon}{10} \le 
		\xi \le 
		\tfrac{10}{\varepsilon} \}
		\end{align}
		for any $\theta < \tfrac{1}{2}$. Indeed, if $|x-x_h| > h^\theta$ then 
		\begin{align*}
		|T_h \phi_h (x, \xi)| &\lesssim h^{-\frac{3d}{4}} h^{-\frac{d-2}{2}} 
		\int 
		e^{-\frac{(x-x_h-y)^2}{2h}} |\phi(\tfrac{y}{h} )| \, dy\\
		&\lesssim h^{1 - \frac{5d}{4}}  \int_{|y| \le |x-x_h|/4} +  h^{1 - 
			\frac{5d}{4}} \int_{|y| > 
			|x-x_h| / 4} \\
		&\lesssim_N h^{1 - \frac{5d}{4} + \theta d} e^{-\frac{(x-x_h)^2}{ch}} + 
		h^{1 - 
			\frac{5d}{4}} h^{N} |x-x_h|^{-N} \\
		&\lesssim_{M, N} h^M |x-x_h|^{-N}
		\end{align*}
		for any $M, N \ge 0$. Similarly,
		\begin{align*}
		|T_h \phi_h(x, \xi)| \lesssim h^{1 - \frac{3d}{4}} \int e^{-\frac{(\eta 
		- 
				\xi)^2}{2h}} | \hat{\phi}(\eta) | \, d\eta \lesssim 
				\left\{\begin{array}{cc} 
		h^{1-\frac{3d}{4}} e^{-\frac{\varepsilon^2}{ch}}, & |\xi| < 
		\varepsilon/10\\
		h^{1-\frac{3d}{4}} e^{-\frac{\xi^2}{ch}}, & |\xi| > 10 / \varepsilon 
		\end{array}\right.
		\end{align*}

		In view of these bounds, we decompose
		\begin{align}
		\label{ch4:e:fbi_supp_decomp}
		\phi_h = T_h^* \mr{1}_B T_h \phi_h + T_h^* (1 - \mr{1}_B) T_h \phi_h = 
		f_h^1 + 
		f_h^2,
		\end{align}
		where by the triangle inequality we obtain, for any $k \ge 0$,
		\begin{align*}
		\| \partial^k f_h^2\|_{L^2} \lesssim \int_{B^c} (h^{-\frac{d+k}{2}} + 
		h^{-\frac{d}{2}} |h^{-1} \xi|^k) |T_h \phi_h (x, \xi)| \, dx d\xi = 
		O(h^\infty).
		\end{align*}
		
		By Sobolev embedding, it therefore suffices to show
		\begin{align}
		\label{ch4:e:extinction_final_reduction}
		\lim_{h \to 0} \| e^{-itA} f_h^1 \|_{L^\infty L^{\frac{2d}{d-2}}( (ch, 
		\infty) 
			\times 
			\mf{R}^3)} = 0.
		\end{align}
		
		To prove this, we fix a large $T > 0$ and consider separately the time 
		intervals $[ch, Th]$ and $[Th, \infty)$. On semiclassical time scales, 
		the 
		quantum evolution of wavepackets is modeled by the geodesic flow. More 
		precisely, if $\psi_{(x, \xi)}^h$ is a typical wavepacket, then for 
		$|t| \le 
		Th$ its Schr\"{o}dinger evolution $e^{-itA} \psi_{(x, \xi)}^h$ will 
		have width 
		$C_T h^{1/2}$ and
		travel along the geodesic starting at $x$ with initial momentum $h^{-1} 
		\xi$ 
		(that is, with velocity $h^{-1} g^{ab} \xi_b$). 
		
		If $T$ is sufficiently large, then by the nontrapping assumption 
		on the metric, all the wavepackets $e^{-iTh A} \psi_{(x, \xi)}^h$ with 
		$(x, 
		\xi) \in B$ will 
		have exited the curved region (this is why it is convenient to assume
		that $\phi$ is frequency-localized away from $0$), and for 
		$t \ge Th$ the 
		solution $e^{-itA} \psi_{(x, \xi)}^h$ will radiate to infinity while 
		dispersing 
		essentially as a Euclidean free particle. The decay for $e^{-itA} 
		f^1_h$ 
		will then be a consequence of the dispersive properties of the 
		Euclidean 
		propagator 
		$e^{it\Delta_{\mf{R}^3}}$.
		
		It will be notationally convenient in the sequel to rescale time 
		semiclassically, that is, replace 
		$t$ by $th$, so that each wavepacket $\psi_{(x, \xi)}^h$ travels at 
		speed 
		$O(1)$ 
		under the 
		propagator $e^{-ith A}$. The desired estimate then becomes
		\begin{align*}
		\lim_{h \to 0} \| e^{-ith A} f_h^1 \|_{L^\infty L^{\frac{2d}{d-2}} ( 
		(c, 
			\infty) \times 
			\mf{R}^3)} = 0.
		\end{align*}
		
		\subsection*{Frequency-localization}
		
		We show next that 
		the operator $A$ may be replaced, up to acceptable errors, by a 
		frequency-localized version. This will let us engage the results
		of Koch 
		and 
		Tataru~\cite{KochTataru2005} for the 
		evolution of wavepackets at fixed frequency.
		
		Choose frequency cutoffs $\chi_j \in C^\infty_0( \mf{R}^d \setminus 
		\{0\})$ 
		such that 
		\begin{align*}
		\{ \xi : \varepsilon \le |\xi| \le \varepsilon^{-1} \} \prec \chi_1 
		\prec 
		\chi_2 \prec \chi_3;
		\end{align*}
		that is, $\chi_1(\xi) = 1$ on the annulus $\varepsilon \le |\xi| \le 
		\varepsilon^{-1}$ and $\chi_j = 1$ near the support of $\chi_{j-1}$.
		Set $A(h) = h^2 A$, let $a = g^{ij} \xi_i \xi_j$ be the principal 
		symbol of 
		$A$, and define the localized operator
		\[
		A'(h) = (\chi_3 a)^w (X, hD) = (2\pi h)^{-d} \int_{\mf{R}^d} 
		e^{\frac{i(x-y) 
				\xi}{h}} 
		a\Bigl( \frac{x+y}{2}, \xi \Bigr)\chi_3(\xi) \, d\xi.
		\]
		We check that the propagator 
		$e^{-\frac{itA'(h)}{h}}$, which preserves $L^2$, is also
		bounded on $\dot{H}^1$ when restricted to frequency $h^{-1}$.
		
		\begin{lma}
			\label{ch4:l:freq-localized_sobolev_cty}
			\begin{align*}
			\| e^{-\frac{itA'(h)}{h}} \chi_1(hD) \|_{\dot{H}^1 \to \dot{H}^1} 
			\le c_k 
			(1 + |t|).
			\end{align*}
		\end{lma}
		
		\begin{proof}
			Let $u_h = e^{-\frac{itA'(h)}{h}} \chi_1(hD)$ be the solution to 
			the  
			evolution equation
			\begin{align*}
			[hD_t + A'(h)] u_h = 0 ,  u_h(0) = \chi_1(hD) \phi.
			\end{align*}
			Differentiating this equation, we obtain
			\begin{align*}
			[hD_t + A'(h)] (hD) u_h = [ hD, A'(h) ] u_h.
			\end{align*}
			By pseudodifferential calculus, $ \| [hD, A'(h)] \|_{L^2 \to L^2} 
			\le c h$, 
			so
			\begin{align*}
			\| hD u_h(t) \|_{L^2} &\le \| hD u_h  (0) \|_{L^2} + h^{-1} 
			\int_0^t \| 
			[hD, A'(h)] u_h(s) \|_{L^2} \ ds\\
			&\le \| hD u_h(0) \|_{L^2} + c |t| \| \chi_1(hD) \phi\|_{L^2}\\
			&\le c (1 + |t|) \| hD\chi_1(hD) \phi\|_{L^2}.
			\end{align*}
		\end{proof}
		
		\begin{lma}
			For each $T > 0$ and for all $|t| \le T$,
			\begin{align*}
			\| (e^{-\frac{itA(h)}{h}} - e^{-\frac{itA'(h)}{h}} ) \chi_1(hD) 
			\|_{\dot{H}^1 \to \dot{H}^1} \le c_T h |t|
			\end{align*}
		\end{lma}
		
		\begin{proof}
			Write
			\begin{align*}
			e^{-\frac{itA(h)}{h}} - e^{-\frac{itA'(h)}{h}} = 
			(e^{-\frac{itA(h)}{h}} - 
			e^{-\frac{it\tilde{A}(h)}{h}}) + ( e^{-\frac{it\tilde{A}(h)}{h}} - 
			e^{-\frac{itA'(h)}{h}}),
			\end{align*}
			where
			\begin{align*}
			\tilde{A}(h) = a^w(X, hD) = -h^2 \partial_j g^{jk} \partial_k - 
			\frac{h^2}{4} (\partial_j \partial_k g^{jk}).
			\end{align*}
			By the Duhamel formula,
			\begin{align*}
			\| e^{-\fr{itA(h)}{h}}\phi - e^{-\fr{it\tilde{A}(h)}{h}} \phi 
			\|_{L^2
				\to L^2} \le h \int_0^t \bigl| \frac{1}{4} \partial_j 
				\partial_k g^{jk} 
			+ V \bigr|
			\|e^{-\fr{isA(h)}{h}} \phi\|_{L^2} \, ds \le ch |t| \| \phi\|_{L^2}.
			\end{align*}
			Introducing the frequency-localization, we see from semiclassical 
			functional calculus (see~\cite{BurqGerardTzvetkov2004}) that 
			\[
			\|(1 - \chi_2(hD))e^{-\fr{itA(h)}{h}} \chi_1(hD)\|_{L^2 \to 
			H^\sigma}
			= O(h^\infty)
			\]
			and similarly with $A$ replaced by $\tilde{A}$. That is, the 
			linear
			evolutions $e^{-\fr{itA(h)}{h}}$
			and $e^{-\fr{it\tilde{A}(h)}{h}}$
			preserve frequency-support.
	
			\ifdraft		
			To see this, choose
			$\tilde{\chi}_1(\lambda)$
			such that
			$\chi_1(\xi) \prec \tilde{\chi}_1(|\xi|_g^2) \prec \chi_2(\xi)$.
			By semiclassical functional calculus (see 
			\cite{BurqGerardTzvetkov2004}) and the fact that 
			\begin{align*}
			\| (1-\Delta_\delta)^{k/2} u\|_{L^2} \sim \| (C + A)^{k/2} 
			u\|_{L^2} 
			\end{align*}
			for a sufficiently large constant $C > 0$, we have
			\begin{align*}
			\| [1 - \chi_2(hD)] e^{-\fr{it A(h)}{h}} \chi_1(hD)\|_{L^2 \to
				H^\sigma} &\le \| [1 -
			\chi_2(hD)] \tilde{\chi}_1(A(h)) e^{-\fr{it
					A(h)}{h}} \chi_1(hD)\|_{L^2 \to H^\sigma} \\
			&+ \| [1 - \chi_2(hD)] e^{ -\frac{itA(h)}{h}} (1-\tilde{\chi}_1( 
			A(h) ) \chi_1(hD) \|_{L^2 \to H^\sigma}   
					\\
			&= O(h^\infty).
			\end{align*}
			The same proof goes through for the propagator
			$e^{-\fr{it\tilde{A}(h)}{h}}$. 
			\fi
			
			Thus
			\begin{equation}
			\label{ch4:e:comp_propagators_1}
			\begin{split}
			\|D( e^{-\fr{itA(h)}{h}} - e^{-\fr{it\tilde{A}(h)}{h}}) \chi_1(hD) 
			\phi
			\|_{L^2} &\le h^{-1} \| (e^{-\fr{itA(h)}{h}} -
			e^{-\fr{\tilde{A}(h)}{h}}) \chi_1(hD) \phi\|_{L^2} + O(h^\infty)\\
			&\lesssim |t| h \| \chi_1(hD)h^{-1} \phi\|_{L^2} \lesssim |t| h \| 
			D 
			\phi\|_{L^2}.
			\end{split}
			\end{equation}
			
			Now we show that
			\begin{align*}
			\| (e^{-\frac{it\tilde{A}(h)}{h}} - e^{-\frac{itA'(h)}{h}} ) 
			\chi_1(hD) 
			\|_{\dot{H}^1 \to \dot{H}^1} \le c h |t|.
			\end{align*}
			For each $\phi \in \dot{H}^1$, the function $u_h = e^{-\frac{it 
					\tilde{A}(h)}{h}} \chi_1( hD) \phi$ solves the equation 
					$[hD_t + A'(h) ] 
			u_h = r_h$, where
			\begin{align*}
			r_h = [(\chi_3 - 1) a]^w(X, hD) \chi_2 (hD) u_h + [(\chi_3 - 1) 
			a]^w (X, 
			hD)(1 - \chi_2(hD)) u_h.
			\end{align*}
			
			As the symbols $(\chi_3 - 1)a$ and $\chi_2$ have disjoint supports, 
			the first 
			term on the left is $O(h^\infty)$ in any Sobolev norm. The 
			frequency 
			localization of $u_h$ implies that the second term is similarly 
			negligible. By 
			the Duhamel formula and 
			Lemma~\ref{ch4:l:freq-localized_sobolev_cty}, for any 
			$T > 
			0$ and $|t| \le T$ we have
			\begin{equation}
			\label{ch4:e:comp_propagators_2}
			\| (e^{-\frac{itA'(h)}{h}} - e^{-\frac{it \tilde{A}(h)}{h}}) 
			\chi_1(hD) 
			\phi\|_{\dot{H}^1} \le c_T \int_0^t \| r_h(s) \|_{\dot{H}^1} \, ds 
			\le 
			|t| O(h^\infty) \| \phi\|_{\dot{H}^1}.
			\end{equation}	
			\eqref{ch4:e:comp_propagators_1} and 
			\eqref{ch4:e:comp_propagators_2} complete the proof.
		\end{proof}
		
		Returning to the decomposition~\eqref{ch4:e:fbi_supp_decomp} and 
		recalling that 
		$\phi_h = \chi_1(hD) \phi_h$, we have
		\begin{align*}
		\| (1 - \chi_1(hD)) f_h^1\|_{\dot{H}^1} = O(h^\infty),
		\end{align*}
		By the previous lemma and Sobolev embedding, 
		\eqref{ch4:e:extinction_final_reduction} will follow from the claims
		\begin{gather}
		\lim_{h \to 0} \| e^{-\frac{itA'(h)}{h}} f_h^1 \|_{L^\infty_t 
			L^{\frac{2d}{d-2}} ( (c, T) \times \mf{R}^d)} = 0 
			\label{ch4:e:extinction_sc}\\
		\lim_{h \to 0} \| e^{-\frac{i(t-T) A(h)}{h}} e^{-\frac{iT A'(h)}{h}} 
		f_h^1 
		\|_{L^\infty L^{\frac{2d}{d-2}} ((T, \infty) \times \mf{R}^d)} = 0. 
		\label{ch4:e:extinction_euclidean}
		\end{gather}
		
		\subsection*{Evolution of a wavepacket}
		
	We describe first the short-time behavior of the
	Schr\"{o}dinger flow on a wavepacket. For each $(x, \xi) \in B \subset T^* 
	\mf{R}^d$, let 
	$t \mapsto (x^t, 
	\xi^t)$ 
	denote the 
	bicharacteristic starting at $(x, \xi)$. 
		\begin{prop}
			\label{ch4:p:wavepacket_sc}
			Let $\psi_{(x_0, \xi_0)}^h$ be a wavepacket. 
			Then
			\begin{align*}
			e^{-\frac{itA'(h)}{h}} \psi_{(x_0, \xi_0)}^h (x) = 
			h^{-\frac{3d}{4}} 
			v \Bigl(x_0, \xi_0, t, \frac{x-x_0^t}{h^{1/2}}\Bigr)e^{\frac{i}{h} 
				[\xi_0^t (x-x_0^t) + \gamma(t, x_0, \xi_0)]},
			\end{align*}
			where $\gamma(t, x_0, \xi_0) = \int_0^t (\xi a_{\xi} - a)(x_0^s, 
			\xi_0^s) \, ds$, and $v(x_0, \xi_0, t, \cdot)$ is Schwartz 
			uniformly in $(x_0, \xi_0)$ and locally 
			uniformly in $t$. 
		\end{prop}
		
		\begin{proof}
			This was proved in~\cite{KochTataru2005} when $h = 1$. We reduce to 
			that 
			case by a change of variable.
			
			For fixed $(x_0, \xi_0)$, let $u$ be the solution to 
			\begin{align*}
			[hD_t + A'(h)] u = 0, \ u(0) = \psi_{(x_0, \xi_0)}^h,
			\end{align*}
			and define the profile $v$ by 
			\begin{align*}
			u(t, x) = h^{-\frac{3d}{4}} v\Bigl( t, \frac{x-x_0^t}{h^{1/2}} 
			\Bigr) 
			e^{\frac{i}{h}[ \xi_0^t (x-x_0^t) + \gamma(t, x_0, \xi_0)]}.
			\end{align*}
			Then $v$ solves the equation $[D_t + (a_{(x_0, \xi_0)}^h)^w(t, X, 
			D) v = 0, 
			= v(0) = \psi_{(0, 0)}^1$, where
			\begin{align*}
			a_{(x_0, \xi_0)}^h (t, x, \xi) = h^{-1} [ a(t, h^{1/2} x + x_0^t, 
			h^{1/2} 
			\xi + \xi_0^t) - h^{1/2} \xi a_{\xi} (x_0^t, \xi_0^t) - h^{1/2} x 
			a_x 
			(x_0^t, \xi_0^t) - a(x_0^t, \xi_0^t)].
			\end{align*}
			As $a_{(x_0, \xi_0)}^h$ vanishes to second order at $(0, 0)$ and 
			satisfies 
			$|\partial^\alpha_x \partial^\beta_\xi a_{(x_0, \xi_0)}^h| \le 
			c_{\alpha 
				\beta}$ for all $|\alpha| + |\beta| \ge 2$, the 
			claim follows from Lemma~\ref{ch4:l:koch-tataru_key_lemma} below.
		\end{proof}
		
		The following lemma was the key step in the proof of \cite[Proposition 
		4.3]{KochTataru2005}
		\begin{lma}
			\label{ch4:l:koch-tataru_key_lemma}
			Let $a(t, \cdot, \cdot)$ be a time-dependent symbol which vanishes 
			to 
			second order at $(0, 0)$ and satisfies $|\partial^\alpha_x 
			\partial^\beta_\xi a| \le c_{\alpha \beta}$ whenever $|\alpha| + 
			|\beta| 
			\ge 
			2$, and $S(t, s)$ be the solution operator for the evolution 
			equation
			\begin{align*}
			[D_t + a^w(t, X, D)] u = 0.
			\end{align*}
			Then $S(t, s) : \mcal{S}(\mf{R}^d) \to \mcal{S}(\mf{R}^d)$ locally 
			uniformly in $t-s$.
		\end{lma}
		
		Next we consider the long-time dynamics. 
		By the nontrapping hypothesis (G2), there exists $T = 
		O(\varepsilon^{-1})$ such 
		that for each $(x, \xi) \in B$, $|x^t| \ge 10$ for all $t \ge T$. 
		\begin{prop}
			\label{ch4:p:wavepacket_long_t}
			For each $(x_0, \xi_0) \in B$ and $t \ge T$, we have a decomposition
			\begin{align*}
			e^{-\frac{i(t-T)A(h)}{h}} e^{-\frac{iT A'(h)}{h}} \psi_{(x_0, 
			\xi_0)}^h = 
			v_1 + v_2,
			\end{align*}
			where 
			\begin{align*}
			|\partial^k v_1(t, x)| \le C_{k, N} h^{-\frac{3d}{4} - k} 
			|t|^{-d/2} \Bigl( 1 + \frac{|x-x_0^t|}{h^{1/2}|t|} \Bigr)^{-N}
			\end{align*}
			and $\|v_2\|_{H^k} = O(h^\infty)$ for all $k$.
		\end{prop}
		
		\begin{proof}
			It suffices to verify the following two assertions:
			\begin{gather}
			|\partial^k_x e^{i(t-T)h\Delta} e^{-\frac{iT A'(h)}{h}} \psi_{(x_0, 
				\xi_0)}^h (x)| \le C_{k, N} h^{-\frac{3d}{4} - k}  |t|^{-d/2} 
			\Bigl( 1 + \frac{|x-x_0^t |}{h^{1/2} |t|} \Bigr)^{-N} 
			\label{ch4:e:wavepacket_long_t_ptwise}\\
			\| (e^{-i(t-T)hA} - e^{i(t-T)h\Delta}) e^{-\frac{iT A'(h)}{h}} 
			\psi_{(x_0, 
				\xi_0)}^h \|_{H^k} \le C_{k, N} h^N. 
			\label{ch4:e:wavepacket_long_t_remainder}
			\end{gather}
			
			For $\phi$ Schwartz, by a stationary phase argument we have
			\begin{align*}
			|e^{it\Delta} \phi (x)| \le C_N \langle t \rangle^{-d/2} \Bigl 
			\langle 
			\frac{x}{2t} \Bigr
			\rangle^{-N}.
			\end{align*}
			Indeed, let $\chi$ be a smoothed characteristic function of the 
			unit ball, 
			and partition
			\begin{align*}
			e^{it\Delta} \phi(x) = \int e^{ i(x\xi - t|\xi|^2)} \chi\Bigl( \xi 
			- 
			\frac{x}{2t} \Bigr) \hat{\phi}(\xi) \, d\xi + \int e^{i(x \xi - 
			t|\xi|^2)} 
			[1 - \chi \Bigl( \xi - \frac{x}{2t} \Bigr)] \hat{\phi}(\xi) \, d\xi.
			\end{align*}
			Integrating by parts in $\xi$, the second term is bounded, for any 
			$N \ge 
			0$, by $|t|^{-N} \langle \tfrac{x}{2t} \rangle^{-N}$. By the 
			stationary 
			phase expansion, the first term equals
			\begin{gather*}
			(2 \pi i t)^{-d/2} e^{\frac{i|x|^2}{4t}} \hat{\phi}\Bigl( 
			\frac{x}{2t} 
			\Bigr) + t^{-\frac{d}{2} - 1} R(t, x),\\
			|R(t, x)| \le c \| (1 + |D_\xi|^2)^d  \chi\Bigl( \xi - \frac{x}{2t} 
			\Bigr) 
			\hat{\phi}(\xi) \|_{L^2_{\xi}} \le C_N \Bigl \langle \frac{x}{2t} 
			\Bigr \rangle^{-N}.
			\end{gather*}
			
			By the previous proposition and standard identities for the 
			Euclidean 
			propagator,
			\begin{align*}
			e^{i(t-T)h\Delta} e^{-\frac{iTA'(h)}{h}} \psi_{(x_0, \xi_0)}^h = 
			h^{-\frac{3d}{4}} e^{\frac{i}{h} [ \xi_0^t (x-x_0^t) + \gamma(t, 
			x_0, 
				\xi_0)]}  e^{i (t-T)h\Delta} \Psi_{(x_0, \xi_0)} \Bigl( 
			\frac{x-x_0^t}{h^{1/2}} \Bigr),
			\end{align*}
			where $\Psi_{(x_0, \xi_0)}$ is Schwartz uniformly in $(x_0, 
			\xi_0)$, and we 
			have used the fact that $(x_0^t, \xi_0^t) = (x_0^T + 2(t-T)\xi_0^T, 
			\xi_0^T)$ for all $t \ge T$. This 
			settles~\eqref{ch4:e:wavepacket_long_t_ptwise} for $k = 0$. When $k 
			> 0$, 
			we 
			note that differentiating the above equation brings down at worst a 
			factor 
			of $|h^{-1} \xi_0| \le c\varepsilon^{-1} h^{-1}$. 
			
			To prove~\eqref{ch4:e:wavepacket_long_t_remainder}, we note first 
			that 
			$e^{-itA}$ is uniformly bounded on each Sobolev space $H^k$. 
			Indeed, for a 
			sufficiently large $C > 0$ the 
			operators $(1-\Delta)^k (C + A)^{-k}$ and $(C + A)^k 
			(1-\Delta)^{-k}$ are 
			pseudodifferential operators of order $0$, which implies that
			\begin{align*}
			\| u\|_{H^k} = \| (1-\Delta)^{k/2} u\|_{L^2} \sim \| (C+A)^{k/2} 
			u\|_{L^2}.
			\end{align*}
			Using the Duhamel formula, triangle inequality, and the above 
			pointwise 
			estimates, we can therefore bound the left side 
			of~\eqref{ch4:e:wavepacket_long_t_remainder} by
			\begin{align*}
			\int_T^\infty \sum_{m=0}^2 \| \chi D^m e^{i(t-T)h\Delta} 
			e^{-\frac{iTA'(h)}{h}} 	\psi_{(x_0, \xi_0)}^h \|_{H^k}  \le C_N h^N 
			\int_T^\infty |t|^{-d/2} \, dt \le C_N h^N.
			\end{align*}
		\end{proof}
		
		\subsection*{The semiclassical regime}
		
		By Proposition~\ref{ch4:p:wavepacket_sc}, on bounded time intervals 
		each 
		wavepacket 
		may be regarded essentially as a particle moving under the geodesic 
		flow. 
		We 
		will 
		obtain the short-time decay~\eqref{ch4:e:extinction_sc} by showing that 
		not too 
		many wavepackets pile up near any point at any time. Heuristically, by 
		the 
		uncertainty principle the wavepackets have a broad 
		distribution of initial momenta, and slower wavepackets will 
		lag behind faster 
		ones along each geodesic. 
		
		To make this rigorous, we need to study the bicharacteristics for 
		the 
		symbol $a = g^{jk}(x) \xi_j \xi_k$. Let $(x, \xi) \mapsto (x^t(x, 
		\xi), 
		\xi^t(x, \xi))$ be the flow on $T^* \mf{R}^d$ induced by the ODE
		\begin{align*}
		\left\{\begin{array}{l} \dot{x}^t = a_{\xi} = 2 g(x^t) \xi^t, \\ 
		\dot{\xi}^t = 
		-a_x = - (\xi^t)^* 
		(\partial_x g)(x^t) \xi^t \end{array}\right. \quad (x^0, \xi^0) = (x, 
		\xi);
		\end{align*}
		The curve $t \mapsto x^t(x, \xi)$ is the geodesic starting at $x$ with 
		tangent 
		vector $g^{ab} \xi_b$, and for fixed $y$ the mapping $\eta \mapsto 
		x^1(y, 
		\eta)$ is essentially the exponential map with basepoint $y$. A 
		standard fact 
		from 
		geometry is the identity
		\begin{align}
		\label{ch4:e:geodesic_homogeneity}
		x^t(x, \xi) = x^1(x, t \xi),
		\end{align}
		which follows from the fact that $s \mapsto (x^{ts}(x, 
		\xi), 
		t\xi^{ts}(x, \xi))$ is the bicharacteristic with initial data $(x, 
		t\xi)$. 
		
		\begin{lma}
			\label{ch4:l:exp_preimage_estimate}
			Let $g$ be a nontrapping metric on $\mf{R}^d$. Then, for all $x, z 
			\in 
			\mf{R}^d$ with $|x| \le 1$ and all $0 
			\le r \le 1$, 
			\begin{align*}
			m( \{ \xi \in \mf{R}^d : |x^1(x, \xi) - z| \le r\}) \le c_{x, z}
			r,
			\end{align*}
			where $m$ denotes Lebesgue measure on $\mf{R}^d_\xi$, and the 
			constant 
			$c_{x, z}$ is locally uniformly bounded in $x$ and $z$. If also $g$ 
			is 
			Euclidean outside a compact set, then
			\begin{align*}
			m( \{ \xi \in \mf{R}^d : |x^1(x, \xi) - z| \le r\}) \le c_x (1 + 
			|z|)^{d-1} r.
			\end{align*}
		\end{lma}
		
		%
		%
		
		The basic idea is that the preimage of a small ball under the 
		exponential map will 
		always be thin in the radial
		direction, though not necessarily in the other directions. This is a 
		consequence 
		of the fact that the exponential map always has nontrivial 
		radial derivative.
		Note that 
		for 
		$z$ near $x$, the above bound can be 
		improved to $O(r^d)$ as the map $\xi \mapsto x^1(x, \xi)$
		is a diffeomorphism for $\xi$ near $0$.

		
		\begin{proof}
			Fix $x$ and $z$. For each $\xi$ we have
			\begin{align*}
			x^1(x, \xi+\zeta) = x^1(x, \xi) + (\partial_\xi x^1)\zeta + 
			r(\zeta), \ 
			|r(\zeta)| = 
			O(|\zeta|^2).
			\end{align*}
			Differentiating equation~\eqref{ch4:e:geodesic_homogeneity} in 
			$t$, 
			we have
			\begin{align*}
			\partial_\xi x^1(x, \xi) \xi = \dot{x}^1(x, \xi) = g(x^1) \xi^1(x, 
			\xi)
			\end{align*}
			which implies that
			\begin{align*}
			|\xi^1| |\partial_\xi x^1(x, \xi) \xi| \ge \xi^1 \cdot 
			(\partial_{\xi} 
			x^1 ) \xi = 2g(x^1)^{jk} (\xi^1)^j (\xi^1)^k \gtrsim |\xi^1|^2.
			\end{align*}
			Using also the fact that $g^{jk}(x)\xi_j \xi_k$ is conserved along 
			the 
			flow, it follows that
			\begin{align*}
			|\partial_\xi x^1 (x, \xi) \xi| \ge c |\xi|.
			\end{align*}
			Thus, if $\zeta_0$ is 
			such that 
			that $|r(\zeta)| \le \tfrac{c}{2} |\zeta|$ for $|\zeta| \le 
			\zeta_0$, then
			\begin{align*}
			\frac{c}{2} |\zeta| \le |x^1(x, \xi + \zeta) - x^1(x, \xi)| \le 2c 
			|\zeta|
			\end{align*}
			for all $\zeta$ parallel to $\xi$ with length at most $\zeta_0$. 
			
			Let $S_{x, z}$ denote the set on the left side in the lemma; the 
			nontrapping 
			hypothesis implies that $S_{x,z}$ is compact. By the 
			preceding considerations, the intersection of each ray $t \mapsto 
			\tfrac{t\xi}{|\xi|}$ with 
			$S_{x, z}$ has measure $O(r)$. The first inequality now follows by 
			integrating in polar 	coordinates. 
			
			Under the additional hypothesis that $g$ is flat outside a compact 
			set, 
			observe that for each $x$ there exists $R > 0$ such that 
			\begin{align}
			\label{ch4:e:unit_sphere_image}
			\sup_{|\xi|_g = 1} |x^t(x,  \xi)| - R < 2t < \inf_{ |\xi|_g = 1} 
			|x^t(x, 
			\xi)| + R,
			\end{align}
			where $|\xi|_g^2 = g^{jk}(x) \xi_j \xi_k$. Indeed, for $T$ 
			sufficiently 
			large and $t \ge T$,
			\begin{align*}
			x^t(x, \xi) = x^T(x, \xi) + 2(t-T)\xi^T(x, \xi),
			\end{align*}
			and $|\xi^T| = |\xi^T|_g = |\xi^0|_g = |\xi|_g = 1$. Set $r = 
			\sup_{|\xi|_g = 1} |x^T(x, \xi)|$ to get
			\begin{align*}
			2|t-T| - r \le |x^t(x, \xi)| \le 2|t-T| + r.
			\end{align*}
			Therefore, $S_{x, z}$ is contained in an annulus $\{|z| - R_x \le 
			|\xi| \le 
			|z| + 
			R_x\}$ with thickness $O(r)$ as before, which implies that $m(S_{x, 
			z}) \le c(1 + 
			|z|)^{d-1}r$. 
		\end{proof}
		
		\begin{rmk}
			It is clear that nontrapping hypothesis may be dispensed with from the first part provided that one restricts to a compact set of $\xi$. That is, if $g$ is any metric, then for each $R > 0$ and $x, z \in \mf{R}^d$ with $|x| \le 1$ and $0 \le r < 1$, we have
			\begin{align*}
				m( \{ \xi \in \mf{R}^d : |x^1(x, \xi) - z| \le r\} \cap \{ |\xi| \le R\} ) \le c_{R, x, z} r.
				\end{align*}

		\end{rmk}
		
		We are ready to establish the short-time 
		extinction~\eqref{ch4:e:extinction_sc}. We have
		\begin{align*}
		|e^{-\frac{itA'(h)}{h}} f_h^1(x) | \le \int_{B} |e^{-\frac{itA'(h)}{h}} 
		\psi_{(x_0, \xi_0)}^h(x) | |T_{h} \phi_h (x_0, \xi_0)| \, dx_0 d\xi_0.
		\end{align*}
		By Proposition~\ref{ch4:p:wavepacket_sc}, each $e^{-\frac{itA'(h)}{h}} 
		\psi_{(x_0, 
			\xi_0)}^h$ concentrates in a radius $h^{1/2}$ ball at $x^t(x_0, 
			\xi_0)$, 
			so the 
		integral is $O(h^\infty)$ for all $|x| \gg T \varepsilon^{-1}$. 
		
		For $|x| \lesssim T \varepsilon^{-1}$, modulo $O(h^\infty)$ we may 
		restrict the 
		integral to the region
		\begin{align*}
		\{ (x_0, \xi_0) \in B : |x^t(x_0, \xi_0) - x| < h^{\alpha}\}
		\end{align*}
		for any $\alpha < \tfrac{1}{2}$. By the rapid decay of 
		$\phi_h$,
		\begin{align*}
		|T_h \phi_h(x_0, \xi_0)| \le 
		ch^{1-\frac{5d}{4}} \int e^{\frac{-(y-x_0)^2}{2h} } 
		|\phi( 
		h^{-1} y)| \, dy \sim  c_\phi h^{1 - \frac{d}{4}} 
		\end{align*}
		Combining this with 
		Proposition~\ref{ch4:l:exp_preimage_estimate} and the 
		definition~\eqref{ch4:e:fbi_support} of $B$, 
		\begin{align*}
		\int_{B} |e^{-\frac{itA'(h)}{h}} 
		\psi_{(x_0, \xi_0)}^h(x) | |T_{h} \phi_h (x_0, \xi_0)| \, dx_0 d\xi_0 
		&\lesssim  
		h^{1 - \frac{d}{4}} h^{-\frac{3d}{4}} h^{d \theta} h^{\alpha} = h^{1 + 
		\alpha - 
			d(1 - \theta)}.
		\end{align*}
		As $\theta$ and $\alpha$ may be chosen arbitrarily close to 
		$\tfr{1}{2}$, it 
		follows that 
		\begin{align*}
		|e^{-\frac{itA'(h)}{h}} f_h^1(x)| \lesssim_\varepsilon h^{\frac{3}{2} - 
			\frac{d}{2} 
			- \varepsilon}
		\end{align*}
		for any $\varepsilon > 0$. Therefore
		\begin{align*}
		\| e^{-\frac{itA'(h)}{h}} f_h^1 \|_{L^{\frac{2d}{d-2}}} \le \| 
		e^{-\frac{itA'(h)}{h}} f_h^1 \|_{L^2}^{1 - \frac{2}{d}} \| 
		e^{-\frac{itA'(h)}{h}} 
		f_h^1 \|_{L^\infty}^{\frac{2}{d}} \lesssim_\varepsilon h^{1 - 
		\frac{2}{d} + 
			\frac{3}{d} - 1 - \varepsilon} \lesssim_{\varepsilon} 
			h^{\frac{1}{d} - 
			\varepsilon}.
		\end{align*}
		
		\begin{rmks}
			\begin{itemize}
			\item [(1)] When $g$ is the Euclidean metric, the exponential map 
			is a 
			diffeomorphism, so the bound in 
			Proposition~\ref{ch4:l:exp_preimage_estimate} 
			is $O(r^d)$. Consequently,
			\begin{align*}
			\int_{B} |e^{-\frac{itA'(h)}{h}} 
			\psi_{(x_0, \xi_0)}^h(x) | |T_{h} \phi_h (x_0, \xi_0)| \, dx_0 
			d\xi_0 
			&\lesssim h^{1 - \frac{d}{4}} h^{-\frac{3d}{4}} h^{d(\theta + 
			\alpha)} 
			\lesssim_\varepsilon h^{1 - \varepsilon}
			\end{align*}
			for any $\varepsilon > 0$, and we find that 
			\begin{align*}
			\|e^{-\frac{itA'(h)}{h}} f_h^1\|_{L^{\frac{2d}{d-2}}} 
			\lesssim_\varepsilon 
			h^{1 - \varepsilon},
			\end{align*}
			recovering modulo an arbitrarily small loss the $O(h)$ decay 
			rate predicted by the $L^{\frac{2d}{d+2}} \to L^{\frac{2d}{d-2}}$ 
			dispersive estimate for 
			the Euclidean propagator $e^{it\Delta}$. The epsilon loss 
			can be 
			avoided if, instead of truncating crudely in phase 
			space~\eqref{ch4:e:fbi_support}, we account for the 
			contribution from each dyadic annulus $\{ 
			2^{k-1}h^{1/2} \le |x| < 2^k h^{1/2} \}$, using the rapid decay of 
			each wavepacket on the $h^{1/2}$ scale. See the appendix for a more 
			precise analysis.
			
			\item [(2)] While the $L^{\frac{2d}{d-2}}$ decay was obtained by 
			interpolating between $L^2$ and $L^\infty$, this argument may be 
			adapted to yield honest 
			$L^1 \to L^\infty$ bounds that more directly parallel the 
			Burq-Gerard-Tzvetkov semiclassical dispersive 
			estimate~\cite{BurqGerardTzvetkov2004}. We relegate these 
			details to the appendix as they are not presently essential.
			
			\item [(3)] Instead of counting wavepackets, one can arrive at the 
			preceding decay rates by appealing to 
			the general parametrix of Hassell-Wunsch~\cite{HassellWunsch2005} 
			for the Schr\"{o}dinger propagator on asymptotically conic 
			manifolds. In fact, their analysis 
			shows that the bounds we 
			obtain are saturated whenever there exist 
			conjugate 
			points $z, w$ of order $d-1$ (that is, the highest 
possible). However, the wavepacket approach does let us treat both the 
semiclassical and long-time regimes in a 
			unified and fairly concrete manner, and is also more 
robust if one is only interested in semiclassics as it uses little information 
about the geometry at infinity.
			\end{itemize}
		\end{rmks}
		
		
		\subsection*{The long-time regime}
		
		Whereas the preceding discussion was essentially local due to finite 
		speed of propagation, long-time decay
		necessarily hinges on the global geometry. Recall that the time 
		parameter $T$ was chosen so that all the wavepackets are far from
		the curved region and radiate to infinity essentially under 
		the 
		Euclidean 
		Schr\"{o}dinger flow. 

		
		To prove~\eqref{ch4:e:extinction_euclidean}, use 
		Proposition~\eqref{ch4:p:wavepacket_long_t} to write
		\begin{align*}
		e^{-\frac{i(t-T)A(h)}{h}} e^{-\frac{iTA'(h)}{h}} f_h^1 = \int_B 
		v_{(x_0, 
			\xi_0)}^h(t) T_h \phi_h (x_0, \xi_0) \, dx_0 d\xi_0 + \int_B 
			r_{(x_0, \xi_0)}^h 
		(t) T_h \phi_h (x_0, \xi_0) \, dx_0 d\xi_0,
		\end{align*}
		where
		\begin{align*}
		|v_{(x_0, \xi_0)}(t, x)| \le C_N h^{-\frac{3d}{4}} |t|^{-d/2} \Bigl(1 + 
		\frac{|x-x_0^t|}{h^{1/2} |t|} \Bigr)^{-N}, \quad \|r_{(x_0, \xi_0)} 
		\|_{H^1} = 
		O(h^\infty).
		\end{align*}
		The second integral is clearly negligible in $L^\infty 
		L^{\frac{2d}{d-2}}$.
		
		To estimate the first
		integral, we proceed as in the short-time estimate, interpolating 
		between 
		$L^2$ and $L^\infty$ to exhibit decay in $L^{\frac{2d}{d-2}}$. For 
		fixed $x$, modulo 
		$O(h^\infty)$ we may restrict the integral to the region
		\begin{align*}
		B' = \{ (x_0, \xi_0) \in B: |x^t(x_0, \xi_0) - x| \le h^{\alpha} (1 + 
		|t|) \}
		\end{align*}
		for any $\alpha < \tfrac{1}{2}$. As $x^{t} = x^T + 2(t-T) \xi^T$ when 
		$t \ge 
		T$, 
		for 
		each $(x_0, \xi_0) \in B$ with $|\xi_0|_g = 1$, the ray $r \mapsto 
		(x_0, 
		r\xi_0)$ 
		intersects the above set in an interval of width $O(h^{\alpha})$. The 
		region 
		$B'$ therefore has measure at most 
		$O(h^{d\theta} 
		h^{\alpha})$, and we obtain
		\begin{align*}
		\int_{B'} |v_{(x_0,  \xi_0)}^h(t, x)| |T_h \phi_h (x_0, \xi_0)| \, dx_0 
		d\xi_0 \le c_\varepsilon h^{1 - \frac{d}{4}} h^{-\frac{3d}{4}} 
		|t|^{-d/2} 
		h^{d\theta + \alpha} = h^{1 + \alpha - d(1 - \theta)} |t|^{-d/2}.
		\end{align*}
		Hence, recalling that $\theta$ may be chosen arbitrarily close to 
		$\tfrac{1}{2}$, for any $\varepsilon > 0$ we 
		have
		\begin{align*}
		\|e^{-\frac{i(t-T)A(h)}{h}} e^{-\frac{iTA'(h)}{h}} f_h^1\|_{L^\infty 
		L^{\frac{2d}{d-2}}} 
		\lesssim T^{-1} h^{1 - \frac{2}{d}} h^{\frac{2(1+\alpha)}{d} - 
		2(1-\theta)}
		\lesssim_\varepsilon T^{-1} h^{\frac{1}{d} - \varepsilon}.
		\end{align*}
		This completes the proof of Proposition~\ref{ch4:p:extinction}.
	\end{proof}
	
	\section{Linear profile decomposition}
	\label{ch4:s:lpd}

The profile decomposition will follow from repeated application of the 
following inverse Strichartz theorem.
	\begin{prop}
		\label{ch4:p:inv_str}
		Let $\{f_n\} \subset \dot{H}^1$ be a sequence such that 
		$\|f_n\|_{\dot{H}^1} \le A$ and $\|e^{it\Delta_g} f \|_{L^\infty L^6} 
		\ge 
		\varepsilon$. Then there 
		exist a function $\phi \in \dot{H}^1$ and parameters $t_n$, 
		$x_n$, $\lambda_n$ such that after passing to a subsequence,
		\begin{align}
		\lim_{n \to \infty} G_n^{-1} e^{it_n \Delta_g} \rightharpoonup \phi 
		\text{ 
			in } \dot{H}^1(g),
		\end{align}
		where $G_n \phi = \lambda_n^{-\frac{1}{2}} \phi(\tfrac{\cdot - 
			x_n}{\lambda_n})$. Setting $\phi_n = e^{-it_n \Delta_g} G_n \phi$, 
			we have
		\begin{gather}
		\liminf_{n} \| \phi_n \|_{\dot{H}^1(g)} \gtrsim 
		\varepsilon^{\frac{9}{4}} 
		A^{-\frac{5}{4}}.
		\label{ch4:e:inv_str_profile_positive_energy}\\
		\lim_n \| f_n\|_{\dot{H}^1}^2 - \| f_n - e^{-it_n \Delta_g }G_n\phi 
		\|_{\dot{H}^1}^2 - \| e^{-it_n\Delta_g} 
		G_n\phi \|_{\dot{H}^1}^2 = 0. 
		\label{ch4:e:inv_str_kinetic_energy_decoupling}\\
		\lim_n \|f_n\|_{L^6}^6 - \| f_n - \phi_n \|_{L^6}^6 - \| \phi_n 
		\|_{L^6}^6 
		= 
		0 \label{ch4:e:inv_str_potential_energy_decoupling}
		\end{gather}
		Finally, the $t_n$ may be chosen so that either $t_n \equiv 0$ or 
		$\lambda_n^{-2} t_n \to \infty$.
	\end{prop}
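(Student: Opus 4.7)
The plan is to follow the standard concentration-compactness strategy of Bahouri-G\'{e}rard and Keraani, with the Euclidean Littlewood-Paley theory replaced by its heat-kernel counterpart from Section~\ref{ch4:s:preliminaries} and the Euclidean dispersive estimate replaced, when necessary, by the extinction Proposition~\ref{ch4:p:extinction}.

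First I would establish a refined Sobolev embedding of the form
\[
\|u\|_{L^6}^6 \lesssim \|u\|_{\dot{H}^1}^4 \sup_{N \in 2^{\mf{Z}}} N^{-1} \|\tilde{P}_N u\|_{L^\infty}^2,
\]
obtained by combining the square-function estimate~\eqref{ch4:e:LP-sqfn} with the Bernstein inequality~\eqref{ch4:e:LP-bernstein}. Choosing a time $t_n^*$ at which $\|e^{it_n^* \Delta_g} f_n\|_{L^6} \gtrsim \varepsilon$ and applying this bound to $u = e^{it_n^* \Delta_g} f_n$ yields a dyadic frequency $N_n$, a time $t_n$ close to $t_n^*$, and a spatial point $x_n$ for which
\[
|(\tilde{P}_{N_n} e^{it_n\Delta_g} f_n)(x_n)| \gtrsim N_n^{1/2} \varepsilon^{9/4} A^{-5/4}.
\]

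Setting $\lambda_n = N_n^{-1}$, the rescaled sequence $h_n = G_n^{-1} e^{it_n \Delta_g} f_n$ is bounded in $\dot{H}^1$, so after a subsequence $h_n \rightharpoonup \phi$ weakly. Testing against a smooth bump $\psi$ obtained by pulling back a fixed Schwartz function under $G_n$ converts the pointwise lower bound above into $|\langle h_n, \psi\rangle_{\dot{H}^1}| \gtrsim \varepsilon^{9/4} A^{-5/4}$, giving \eqref{ch4:e:inv_str_profile_positive_energy}. For the dichotomy on $t_n$, I pass to a further subsequence on which $\tau_n := \lambda_n^{-2} t_n$ either converges to a finite $\tau_\infty$ or tends to $\pm\infty$; in the bounded case, Corollary~\ref{ch4:c:concentrating_profile_strong_conv} (together with the cases (a), (b) of Theorem~\ref{ch4:t:convergence_of_propagators}) lets me absorb $e^{i\tau_\infty \Delta}$, for the appropriate limit operator $\Delta$, into a redefined profile and set $t_n \equiv 0$.

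The energy decoupling~\eqref{ch4:e:inv_str_kinetic_energy_decoupling} is the usual consequence of weak convergence in the Hilbert space $\dot{H}^1$, after invoking the unitarity of $e^{it\Delta_g}$, the scale invariance of $G_n$ in the Euclidean $\dot{H}^1$, and the equivalence $\|\cdot\|_{\dot{H}^1(g)} \sim \|\cdot\|_{\dot{H}^1(\delta)}$. For the $L^6$ decoupling~\eqref{ch4:e:inv_str_potential_energy_decoupling}, Rellich-Kondrachov upgrades the weak convergence, along a further subsequence, to pointwise a.e.\ convergence $h_n \to \phi$; when $t_n \equiv 0$, the three-dimensional scale invariance of $L^6$ under $G_n$ transfers this to pointwise a.e.\ convergence of $f_n - \phi_n$ to zero and the Brezis-Lieb lemma gives the decoupling, while when $|\tau_n| \to \infty$ the extinction Proposition~\ref{ch4:p:extinction} directly forces $\|\phi_n\|_{L^6} \to 0$, making the decoupling automatic. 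I expect this last case to be the subtlest point of the proof: the Euclidean argument relies on dispersion, which fails on curved backgrounds, so the entire long-time analysis of Section~\ref{ch4:s:extinction} is precisely what makes the $L^6$ decoupling go through.
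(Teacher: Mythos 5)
Your approach is essentially the paper's: an inverse Sobolev lemma built from heat-kernel Littlewood--Paley theory to extract the parameters, a weak limit after rescaling, kinetic decoupling from weak convergence, and potential-energy decoupling via Brezis--Lieb in the $t_n\equiv 0$ case and long-time dispersion otherwise. However, two of the steps you compress contain genuine issues.

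First, the kinetic-energy decoupling~\eqref{ch4:e:inv_str_kinetic_energy_decoupling} is an inner-product statement, $2\operatorname{Re}\langle f_n-\phi_n,\phi_n\rangle_{\dot H^1(g)}\to 0$, and ``equivalence of norms'' between $\dot H^1(g)$ and $\dot H^1(\delta)$ does not transport Pythagorean decoupling from one inner product to the other. The difficulty is that $e^{it\Delta_g}$ is unitary on $\dot H^1(g)$ while $G_n$ is an isometry of $\dot H^1(\delta)$, but neither operator is an isometry of the other space. The paper's resolution is to observe that $G_n$ \emph{is} an isometry from $\dot H^1(g_n)$ to $\dot H^1(g)$ for the rescaled metric $g_n(x)=g(x_n+\lambda_n x)$, so the cross term equals $2\operatorname{Re}\langle G_n^{-1}e^{it_n\Delta_g}f_n-\phi,\phi\rangle_{\dot H^1(g_n)}$, and one then argues that $g_n$ converges (locally uniformly if $\lambda_\infty<\infty$, weakly to $\delta$ if $\lambda_\infty=\infty$) so that this pairing tends to a genuine inner product against a weakly null sequence. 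Your sketch needs this additional step.

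Second, in the $L^6$ decoupling when $|\tau_n|=\lambda_n^{-2}|t_n|\to\infty$, the extinction Proposition~\ref{ch4:p:extinction} only applies to concentrating profiles, i.e.\ the case $\lambda_\infty=0$ and $x_\infty\in\mf R^3$. You must also handle the regimes $\lambda_\infty=\infty$, $x_\infty=\infty$, and $\lambda_\infty\in(0,\infty)$ with $x_\infty\in\mf R^3$. In the first two, Theorem~\ref{ch4:t:convergence_of_propagators} reduces the linear flow to a constant-coefficient propagator and the Euclidean dispersive estimate gives $\|\phi_n\|_{L^6}\to 0$; in the third, one needs the linear asymptotic completeness result Lemma~\ref{ch4:l:H1_linear_scattering} to compare $e^{it\Delta_g}$ with $e^{it\Delta}$ for large $t$, again followed by the Euclidean dispersive estimate. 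As written, your argument covers only one of the four regimes. A lesser point: the refined Sobolev inequality you write, with exponents $4$ and $1$ on $\|u\|_{\dot H^1}$ and the Besov piece, gives the lower bound $\liminf_n\|\phi_n\|_{\dot H^1}\gtrsim\varepsilon^3A^{-2}$ rather than $\varepsilon^{9/4}A^{-5/4}$; both suffice for the iterative application, but to reproduce the stated exponents you need the sharper $1/3$--$2/3$ split the paper uses.
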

	
	\begin{proof}
	
	We use the following inverse Sobolev lemma:
	\begin{lma}
		\label{ch4:l:inv_sobolev}
		If $\|f\|_{\dot{H}^1} \le A$ and $\| e^{it\Delta_g} f\|_{L^\infty L^6} 
		\ge 
		\varepsilon$, then 
		there exist $t$, $x$, $N$, such that
		\begin{align}
		|(\tilde{P}_N)^2 e^{it\Delta_g} f)(x)| \gtrsim N^{\frac{1}{2}} 
		\varepsilon^{\frac{9}{4}} 
		A^{1 - \frac{9}{4}}.
		\end{align}
	\end{lma}
	
	\begin{proof}
		A Littlewood-Paley theory argument yields the following Besov 
		refinement 
		of Sobolev embedding (see~\cite{claynotes})
		\begin{align*}
		\| e^{it\Delta_g} f\|_{L^\infty L^6} \lesssim \| f 
		\|_{\dot{H}^1}^{\frac{1}{3}} \sup_N \| (\tilde{P}_N)^2  e^{it\Delta_g} 
		f\|_{L^\infty 
		 L^6}^{\frac{2}{3}}.
		\end{align*}
		Then, using the elementary inequality
		$
		\| (\tilde{P}_N)^2 f \|_2 \lesssim N^{-1} \| \tilde{P}_N 
		f\|_{\dot{H}^1}$,
		which follows from the corresponding pointwise inequality for symbols, 
		we have
		\begin{align*}
		\varepsilon^{\frac{3}{2}} A^{-\frac{1}{2}} &\lesssim \| (\tilde{P}_N 
		)^2 e^{it\Delta_g} f\|_{L^\infty L^6} \lesssim \| (\tilde{P}_N )^2 
		e^{it\Delta_g} f \|_{L^\infty L^2}^{\frac{1}{3}} \| (\tilde{P}_N )^2 
		e^{it\Delta_g} f\|_{L^\infty L^\infty}^{\frac{2}{3}}\\
		&\lesssim N^{-\frac{1}{3}} A^{\frac{1}{3}} \| (\tilde{P}_N )^2 
		e^{it\Delta_g} f \|_{L^\infty L^\infty}^{\frac{2}{3}}.
		\end{align*}
	\end{proof}

	Select $(t_n, x_n, N_n)$ according to this lemma, and set $\lambda_n = 
	N_n^{-1}$. After 
	passing to a subsequence, we may assume $\lambda_n \to \lambda_\infty \in 
	[0, \infty]$ 
	and 
	$x_n \to x_\infty \in \mf{R}^d \cup \{ \infty\}$. We may extract a weak 
	limit
	\begin{align*}
	G_n^{-1} e^{it_n\Delta_g} f_n \rightharpoonup \phi \text{ in } 
	\dot{H}^1(g).
	\end{align*}
	As $\dot{H}^1(g)$ and $\dot{H}^1(\delta)$ have equivalent norms, their 
	duals 
	may be identified; hence the weak limit also holds in 
	$\dot{H}^1(\delta)$. 
	
	Define $\phi_n = e^{-it_n \Delta_g} G_n 
	\phi$. 
	
	We verify that this profile has positive energy. From
	Theorem~\ref{ch4:t:gaussian_heat_bounds} and the facts that $d_g(x, y) \sim 
	|x-y|$, 
	$dg= \sqrt{|g|} \, dx \sim dx$, there exist constants $c_1, c_2 > 0$ such 
	that
	\[
	N_n^{\frac{1}{2}} \varepsilon^{\frac{9}{4}} A^{-\frac{5}{4}} \le c_1 
	N_n^{3} 
	\int 
	e^{-c_2 N_n^2|x_n-y|^2} |e^{it_n\Delta_g} f_n|(y) dy.
	\] 
	Thus
	\begin{align*}
	c\varepsilon^{\frac{9}{4}} A^{-\frac{5}{4}} \le \int e^{-|y|^2} G_n^{-1} 
	|e^{it_n \Delta_g} f_n |(y) \, dy.
	\end{align*}
	As $G_n^{-1} e^{it_n \Delta_g} f_n \rightharpoonup \phi$ in $\dot{H}^1$, 
	$|G_n^{-1} e^{it_n 
		\Delta_g} f_n | \rightharpoonup |\phi|$ in $\dot{H}^1$. Indeed, by the 
	Rellich-Kondrashov theorem, the sequences $G_n^{-1} e^{it_n \Delta_g}$ and 
	$|G_n^{-1} e^{it_n \Delta_g}|$ converge to their $\dot{H}^1$ weak limits in 
	$L^2_{loc}$.
	
	Taking $n \to \infty$ in the above 
	inequality and bounding $e^{-|y|^2}$ in $\dot{H}^{-1}$ by its $L^{6/5}$ 
	norm, 
	we get
	\begin{align*}
	\varepsilon^{\frac{9}{4}} A^{-\frac{5}{4}} \le \int e^{-|y|^2} |\phi| \, dy 
	\lesssim \| |\phi| \|_{\dot{H}^1} \lesssim \| \phi 
	\|_{\dot{H^1}}.
	\end{align*}
	The claim~\eqref{ch4:e:inv_str_profile_positive_energy} follows from the 
	equivalence of 
	$\dot{H}^1(\delta)$ and $\dot{H}^1(g)$.
	
	To prove the decoupling~\eqref{ch4:e:inv_str_kinetic_energy_decoupling}, 
	write
	\begin{align*}
	\|f_n\|_{\dot{H}^1}^2 - \|f_n - e^{-it_n \Delta_g} G_n \phi 
	\|_{\dot{H}^1}^2 - 
	\| e^{-it_n \Delta_g} \phi \|_{\dot{H}^1} &= 2 \opn{Re} \langle e^{it_n 
		\Delta_g} f_n - G_n \phi, G_n \phi \rangle_{ \dot{H}^1}  \\
	&= 2\opn{Re} \langle 
	G_n^{-1} e^{it_n \Delta_g} f_n - \phi, \phi \rangle_{\dot{H}^1(g_n)},
	\end{align*}
	where $g_n(x) = g(x_n + \lambda_n x)$.
	
	To see that the right side goes to $0$, we consider two cases. 
	If $\lambda_\infty <\infty $, then by Arzel\`{a}-Ascoli, after passing to a 
	subsequence the 
	metrics $g_n$ converge boundedly and locally uniformly to some metric 
	$g_\infty$. If on 
	the 
	other hand $\lambda_\infty = \infty$, then $g_n$ converges weakly to the 
	Euclidean 
	metric 
	as $g_n(x) = \delta$ outside the shrinking balls $|x-x_n| \le 
	\lambda_n^{-1}$. 
	
	To 
	streamline 
	the presentation, in the sequel we let $g_\infty$ denote the locally 
	uniform 
	limit in the first case and $g_\infty = \delta$ in the second case.

	When $\lambda_\infty < \infty$, then
	\begin{align*}
	\langle G_n^{-1} e^{it_n \Delta_g} f_n - \phi, \phi\rangle_{\dot{H}^1(g_n)} 
	= 
	\langle G_n^{-1} e^{it_n \Delta_g} f_n - \phi, 
	\phi\rangle_{\dot{H}^1(g_\infty)} + o(1) \to 0
	\end{align*}
	by dominated convergence. 
	
	If $\lambda_\infty = \infty$, writing
	\begin{align*}
	\langle u, v \rangle_{\dot{H}^1(g_n)} = \int \nabla u \cdot 
	\overline{\nabla v} 
	\, dx + \int_{ |x-x_n| \le \lambda_n^{-1}} \langle du, dv \rangle_{g_n} \, 
	dg_n - 
	\int_{ 
		|x-x_n| \le \lambda_n^{-1}} \nabla u \cdot \overline{\nabla v} \, dx,
	\end{align*}
	we have
	\begin{align*}
	&\langle G_n^{-1} e^{it_n \Delta_g} f_n - \phi, \phi \rangle_{ \dot{H}^1 ( 
	g_n 
		) 
	} = \langle G_n^{-1} e^{it_n \Delta_g} f_n - \phi, \phi \rangle_{ \dot{H}^1 
	(\delta)} + o(1),
\end{align*}
which vanishes since $G_n^{-1} e^{it_n \Delta_g} f_n - \phi 
\rightharpoonup 0$ in $\dot{H}^1(\delta)$.

We show next that if $t_n \lambda_n^{-2}$ is bounded, then after modifying the 
profile 
$\phi$ slightly we may arrange for $t_n \equiv 0$. 

Suppose that $t_n \lambda_n^{-2} \to t_\infty$. By 
Theorem~\ref{ch4:t:convergence_of_propagators} and its corollary, we have
\begin{align*}
\phi_n = e^{-it_n\Delta_g} G_n \phi = G_n e^{-it_\infty \Delta}\phi + 
r_n, 
\quad \|r_n\|_{\dot{H}^1} = o(1).
\end{align*}
Define the modified profile $\tilde{\phi} = e^{-it_\infty \Delta}$, 
$\tilde{\phi}_n = G_n \tilde{\phi}$. Clearly 
\eqref{ch4:e:inv_str_profile_positive_energy} holds with $\tilde{\phi}_n$ in 
place 
of $\phi_n$. We claim that
\begin{align}
\label{ch4:e:inv_str_modified_profile}
G_n^{-1} f_n \rightharpoonup \tilde{\phi} \text{ in } \dot{H}^1(g).
\end{align}
Suppose $\lambda_n$ is bounded above. Passing to a subsequence, the metrics 
$g_n$ 
converge locally uniformly to some metric $g_\infty$. Then as
\begin{align*}
\langle G_n^{-1} f_n - e^{-it_\infty \Delta} \phi, \psi \rangle_{ 
	\dot{H}^1(g_n)} &= \langle f_n - G_n e^{-it_\infty \Delta} \phi, G_n 
\psi \rangle)_{\dot{H}^1(g)} + o(1)\\
&= \langle e^{it_n \Delta_g} f_n - G_n \phi, e^{it_n \Delta_g} G_n \psi 
\rangle_{ \dot{H}^1(g)} + o(1)\\
&= \langle G_n^{-1} e^{it_n \Delta_g} f_n - \phi, G_n^{-1} e^{it_n \Delta_g} 
G_n \psi \rangle_{\dot{H}^1 (g_n)} + o(1)\\
&= \langle G_n^{-1} e^{it_n \Delta_g} f_n - \phi, e^{it_\infty \Delta} 
\psi \rangle_{\dot{H}^1(g_n)} + o(1)\\
&= \langle G_n^{-1} e^{it_n \Delta_g} f_n - \phi, e^{it_\infty \Delta} 
\psi \rangle_{\dot{H}^1(g_\infty)} + o(1)\\
&= o(1),
\end{align*}
we have for all $\psi \in \dot{H}^1$
\begin{align*}
\langle G_n^{-1} f_n - e^{-it_\infty \Delta} \phi, \psi 
\rangle_{\dot{H}^1(g_\infty)} = o(1)
\end{align*}
which implies weak convergence in $\dot{H}^1(g)$ since the norms defined by 
$g_\infty$ and $g$ are equivalent. 

If instead $\lambda_n \to \infty$, then as before
\begin{align*}
\langle G_n^{-1} f_n - e^{-it_\infty \Delta} \phi, \psi 
\rangle_{\dot{H}^1(\delta)} = \langle G_n^{-1} f_n - e^{-it_\infty 
	\Delta} \phi, \psi \rangle_{\dot{H}^1(g_n)} + o(1) \to 0.
\end{align*}

Having verified the weak limit~\eqref{ch4:e:inv_str_modified_profile}, the same 
argument as before establishes the decoupling of kinetic 
energies~\eqref{ch4:e:inv_str_kinetic_energy_decoupling}.

To establish the asymptotic additivity of nonlinear 
energy~\eqref{ch4:e:inv_str_potential_energy_decoupling}, we use the 
refined Fatou lemma of Brezis and Lieb:
\begin{lma}[\cite{brezis_lieb}]
	\label{ch4:l:brezis_lieb}
	Suppose $f_n \in L^p(\mu)$ converge a.e. to some $f \in L^p(\mu)$ and 
	$\sup_{n} \|f_n\|_{L^p} < \infty$. Then
	\begin{align*}
	\int_{\mf{R}^d} \Bigl| |f_n|^p - |f_n - f|^p - |f|^p \Bigr| \, d\mu \to 0.
	\end{align*}
\end{lma}

\ifdraft
\begin{proof}[Proof sketch]
	Partition $\mf{R}^d$ into $B$ and $B^c$, where $B$ is a large ball that 
	captures essentially all of the $L^p$ norm of $f$.
	The integral over $B$ converges to $0$ by Egorov's theorem. Over $B^c$, $f$ 
	is 
	essentially negligible and the terms $|f_n|^p$ and 
	$|f_n - f|^p$ nearly cancel:
	\begin{align*}
	\int_{B^c} \Bigl| |f_n|^p - |f_n - f|^p - |f|^p \Bigr| \, d\mu &\le c 
	\int_{B^c} |f|( |f_n|^{p-1} + |f_n - f|^{p-1}) \, d\mu + \int_{B^c} |f|^p 
	\, d\mu\\
	&\le c \|f\|_{L^p(B^c)} ( \|f_n\|_{L^p}^{p-1} + \|f_n - f\|_{L^p( 
		B^c)}^{p-1}) + \|f\|_{L^p(B^c)}^p.
	\end{align*}
\end{proof}
\fi

Assume $t_n \equiv 0$. Then $\phi_n = G_n \phi$ and 
$G_n^{-1} f_n$ converges weakly in $\dot{H}^1$ to $\phi$. By Rellich-Kondrashov 
and a diagonalization argument, after passing to a subsequence we have 
$G_n^{-1} f_n \to \phi$ pointwise a.e. By a change of variable, the left side 
of~\eqref{ch4:e:inv_str_potential_energy_decoupling} is bounded by
\begin{align*}
\int \Bigl| |G_n^{-1} f_n|^6 - | G_n^{-1} f_n| - \phi|^6 - |\phi|^6 \Bigr| \, 
dg_n \le \int \, dg_\infty + \int \, d|g_n - g_\infty|,
\end{align*}
where we write $d|g_n - g_\infty|= |\sqrt{|g_n|} - 
\sqrt{|g_\infty|} | \, dx$.
The first term vanishes by the Brezis-Lieb lemma, while for the second 
integral we note that $\int |\phi|^6 \, d|g_n - g_\infty| \to 0$ and argue as 
in the proof of that lemma.

Suppose $t_n \lambda_n^{-2} \to \infty$ (the case $t_n \lambda_n^{-2} \to 
-\infty$ is similar). 
Different arguments are required 
depending on the behavior of the parameters, but in each case we conclude that
\begin{align*}
\lim_{n \to \infty} \| \phi_n\|_{L^6} = 0,
\end{align*}
which clearly implies~\eqref{ch4:e:inv_str_potential_energy_decoupling}.

If $\lambda_\infty = \infty$ or $x_\infty = \infty$, then by 
Theorem~\ref{ch4:t:convergence_of_propagators} we have
\begin{align*}
\phi_n = e^{-it_n \Delta} G_n \phi + r_n, \ \|r_n\|_{L^6} = o(1),
\end{align*}
and the decay in $L^6$ follows from the dispersive estimate for the Euclidean 
propagator.

If $0 < \lambda_\infty < \infty$ and $x_\infty \in \mf{R}^3$, then $G_n \phi 
\to 
\phi'$, and we appeal to 
Lemma~\ref{ch4:l:H1_linear_scattering} below to find $\tilde{\phi} \in 
\dot{H}^1$ 
such 
that 
\begin{align*}
\lim_{t \to \infty} \| e^{it\Delta_g} \phi' - e^{it\Delta} \tilde{\phi} 
\|_{\dot{H}^1} \to 0.
\end{align*}
We bound by the triangle inequality
\begin{align*}
\| e^{it_n \Delta_g} G_n \phi \|_{L^6} \le \| e^{it\Delta_g}(G_n\phi - \phi') 
\|_{L^6} +  \| e^{it\Delta_g} \phi' - e^{it\Delta} \tilde{\phi} \|_{L^6} + \| 
e^{it_n \Delta} \tilde{\phi} \|_{L^6}
\end{align*}
and use the Euclidean dispersive estimate and Sobolev embedding.

For the remaining case where $\lambda_\infty = 0$ and $x_\infty \in \mf{R}^3$, 
we 
invoke the extinction lemma.

\begin{lma}[Linear asymptotic completeness]
	\label{ch4:l:H1_linear_scattering}
	The limits $\lim_{t \to \pm \infty} e^{-it\Delta_\delta} e^{it\Delta_g}$ 
	exist 
	strongly in 
	$\dot{H}^1$.
\end{lma}
\begin{proof}
	Suppose first that $\phi \in C^\infty_0$. By the Duhamel formula,
	\begin{align*}
	e^{-it\Delta} e^{it\Delta_g} \phi = \phi + i \int_0^t e^{-is\Delta} 
	(\Delta_g - \Delta) e^{is\Delta_g} \phi \, ds,
	\end{align*}
	and we need to show that
	\begin{align*}
	\lim_{t \to \infty} \int_0^t e^{-is\Delta} (\Delta_g - \Delta) 
	e^{is\Delta_g} \phi \, ds
	\end{align*}
	exists in $\dot{H}^1$. We use (the dual of) the endpoint Strichartz 
	estimate 
	$e^{it\Delta_g}: L^2 \to L^2 L^6$. For $t_1 < t_2$, we have
	\begin{align*}
	&\Bigl \| \int_{t_1}^{t_2} e^{-is\Delta} (\Delta_g - \Delta) e^{is 
	\Delta_g} 
	\, ds \Bigr \|_{\dot{H}^1} \lesssim \| \nabla (\Delta_g - \Delta) 
	e^{it\Delta_g} \phi  \|_{L^2 L^{6/5} ([t_1, t_2])}\\
	&\lesssim \| \chi \nabla e^{it\Delta_g} \phi\|_{L^2 L^{6/5}} + \| \chi 
	\nabla^2 
	e^{it\Delta_g} \phi\|_{L^2 L^{6/5}} + \| \chi \nabla^3 e^{it\Delta_g} 
	\phi\|_{L^2 L^{6/5}}
	\end{align*}
	for some bump function $\chi$. Using H\"{o}lder, the equivalence 
	of Sobolev spaces, and the Strichartz inequality, each term is bounded by
	\begin{align*}
	\|\chi\|_{L^{3/2}} \| (1 - \Delta)^{3/2} e^{it \Delta_g} \phi\|_{L^2 
		L^6([t_1, t_2])}  
	&\lesssim \| (1 - \Delta_g)^{3/2} e^{it\Delta_g} \phi\|_{L^2 L^6([t_1, 
		t_2])} \lesssim 
	\| \phi\|_{H^3}.
	\end{align*}
	As $t_1, t_2 \to \infty$, the left side goes to 0. 
	Thus
	\begin{align*}
	\lim_{t \to \infty} e^{-it\Delta_\delta} e^{it\Delta_g} \phi
	\end{align*}
	exists in $\dot{H}^1$ for any $\phi \in C^\infty_0$.
	
	For general $\phi \in \dot{H}^1$, select for each $\varepsilon > 0$ some 
	$\phi_\varepsilon \in C^\infty_0$ 
	with 
	$\| \phi - \phi_\varepsilon \|_{\dot{H}^1} < \varepsilon$.
	Write $W(t) = e^{-it\Delta_\delta} e^{it\Delta_g}$, 
	\begin{align*}
	W(t) \phi = W(t) \phi_\varepsilon + W(t) (\phi - \phi_\varepsilon).
	\end{align*}
	As $W(t)$ are bounded on $\dot{H}^1$ uniformly in $t$, we have for all $t_1 
	< 
	t_2$
	\begin{align*}
	\| W(t_2) \phi - W(t_1)\phi\|_{\dot{H}^1} \le \| W(t_2) \phi_\varepsilon - 
	W(t_1) \phi_\varepsilon \|_{\dot{H}^1} + c \varepsilon;
	\end{align*}
	so $W(t) \phi$ also converges in $\dot{H}^1$.
\end{proof}
This completes the proof of Proposition~\ref{ch4:p:inv_str}.
\end{proof}

We now prepare to introduce the linear profile decomposition.
\begin{define}
	Two frames $(\lambda_n^1, t_n^1, x_n^1)$ and $(\lambda_n^2, t_n^2, x_n^2)$ 	
	are 
	\emph{orthogonal} if 
	\begin{align*}
	\fr{ \lambda_n^1 }{ \lambda_n^2} + \frac{ \lambda_n^2}{\lambda_n^1} + 
	\frac{ 
		|t_n^1 - t_n^2 |}{ \lambda_n^1 \lambda_n^2 } +
	\frac{ |x_n^1 - x_n^2 | }{ \sqrt{ \lambda_n^1 \lambda_n^2} } = \infty.
	\end{align*}
	They are \emph{equivalent} if
	\begin{align*}
	\frac{\lambda_n^1}{\lambda_n^2} \to \lambda_\infty \in (0, \infty), \ 
	\frac{t_n^1 - t_n^2}{ \lambda_n^1 \lambda_n^2 } \in \mf{R}, \
	\frac{x_n^1 - x_n^2}{ \sqrt{\lambda_n^1 \lambda_n^2}} \to x_\infty \in 
	\mf{R}^3.
	\end{align*}
\end{define}

\begin{lma}
	\label{ch4:l:equiv_orthogonal_frames}
	If frames $(\lambda_n^1, t_n^1, x_n^1)$ and $(\lambda_n^2, \lambda_n^2, 
	\lambda_n^2)$ are orthogonal, then
	\[
	(e^{-it_n^2 \Delta_g} G_n^2)^{-1} e^{-it_n^1 \Delta_g} G_n^1
	\]
	 converges in 
	weak $\dot{H}^1$ to zero. If they are equivalent, then $(e^{-it_n^2 
		\Delta_g} G_n^2)^{-1} e^{-it_n^1 \Delta_g} G_n^1$ converges strongly to 
	some injective $U_\infty: \dot{H}^1 \to \dot{H}^1$. 
\end{lma}

\begin{proof}
	Assume the frames are orthogonal, and put $t_n = t_n^2 - t_n^1$. Suppose 
	first that $|(\lambda_n^1)^{-2} t_n| \to \infty$. By passing to a 
	subsequence, we may assume $\lambda_n^1 \to 
	\lambda^1_\infty \in [0, \infty]$ and $x_n^1 \to x_\infty^1 \in \mf{R}^3 
	\cup \{ \infty\}$. Then
	\begin{align*}
	\| (G_n^2)^{-1} e^{i(t_n^2 - t_n^1))\Delta_g} G_n^1  \phi \|_{L^6} \to 0 
	\textrm{ for each } \phi \in \dot{H}^1.
	\end{align*}
	Indeed, if $\lambda_\infty^1 \in (0, 
	\infty)$ and $x_\infty^1 \in \mf{R}^3$ this follows from by 
	Lemma~\ref{ch4:l:H1_linear_scattering} and the Euclidean dispersive 
	estimate.  
	For  all other configurations of 
	$\lambda_\infty^1$ and $x_\infty^1$, we appeal to
	Theorem~\ref{ch4:t:convergence_of_propagators} to see that
	\begin{align*}
	\| e^{i(t_n^2 - t_n^1)\Delta_g} G_n^1 \phi - e^{i(t_n^2 - t_n^1)} G_n^1 
	e^{it\Delta}\phi\|_{L^6} \to 0.
	\end{align*}
	where $\Delta$ is, up to a linear change of variable, the Euclidean 
	Laplacian. 
	The decay in $L^6$ therefore follows from the Euclidean dispersive estimate.
	
	As $(G_n^2)^{-1} e^{i(t_n^2 - t_n^1)\Delta_g} G_n^1 \phi$ forms a bounded 
	sequence in $\dot{H}^1$, to determine its weak limit it suffices to test 
	against compactly supported functions. For $\psi \in C^\infty_0$, we have
	\begin{align*}
	|\langle (G_n^2)^{-1} e^{i(t_n^2 - t_n^1)\Delta_g} G_n^1 \phi, \psi 
	\rangle_{L^2}| \le \|  (G_n^2)^{-1} e^{i(t_n^2 - t_n^1)\Delta_g} G_n^1 
	\phi\|_{L^{6}} \| \psi \|_{L^{6/5}} \to 0.
	\end{align*}
	
	Assume now that $(\lambda_n^1)^{-2} (t_n^2 - t_n^1) \to t_\infty \in 
	\mf{R}$. This implies that
	\begin{align}
	\label{ch4:e:equiv_orthogonal_frames_e1}
	\frac{\lambda_n^1}{\lambda_n^2} + \frac{ \lambda_n^2 }{ \lambda_n^1 } + 
	\frac{ |x_n^1 - x_n^2| } { \sqrt{ \lambda_n^1 \lambda_n^2} } \to \infty.
	\end{align}
	As before, we may assume that $\lambda_n^1 \to \lambda_\infty^1 \in [0, 
	\infty]$ and $x_n^1 \to x_\infty \in \mf{R}^3 \cup \{ \infty \}$.
	
	If $\lambda_\infty^1 \in (0, \infty)$ and $x_\infty^1 \in \mf{R}^3$, then 
	it 
	must be the case that
	\begin{align*}
	\lim_{n \to \infty} \frac{\lambda_n^1}{\lambda_n^2} \in \{0, \infty\},
	\end{align*}
	Since the functions $f_n := e^{i(t_n^2 - t_n^1)\Delta_g} G_n^1 \phi$ form a 
	precompact subset of $\dot{H}^1$, the sequences $\nabla f_n$ and $\xi 
	\hat{f}_n(\xi)$ are tight in $L^2$. It follows that
	\begin{align*}
	\langle (G_n^2)^{-1} e^{i(t_n^2 - t_n^1)\Delta_g} G_n^1 \phi , \psi 
	\rangle_{\dot{H}^1(\delta)} = \langle 
	e^{i(t_n^2 - t_n^1)\Delta_g} G_n^1 \phi, G_n^2 \psi 
	\rangle_{\dot{H}^1(\delta)} 
	\to 0.
	\end{align*}
	From the equivalence of $\dot{H}^1(\delta)$ and $\dot{H}^1(g)$ we conclude 
	weak convergence to zero in $\dot{H}^1(g)$.
	
	For all other configurations of the limiting parameters $\lambda_\infty^1$ 
	and 
	$x_\infty^1$, we appeal to Theorem~\ref{ch4:t:convergence_of_propagators} 
	and 
	Corollary~\ref{ch4:c:concentrating_profile_strong_conv} to see 
	that
	\begin{align*}
	\| (G_n^2)^{-1} e^{i(t_n^2 - t_n^2)\Delta_g} G_n^1 \phi - (G_n^2)^{-1} e^{ 
		i(t_n^2 - t_n^1)\Delta} G_n^1 \phi \|_{\dot{H}^1} \to 0,
	\end{align*}
	where $\Delta$ is the Euclidean Laplacian modulo a linear change of 
	variable.
	Thus
	\begin{align*}
	\langle (G_n^2)^{-1} e^{i(t_n^2 - t_n^1)\Delta_g} G_n^1 \phi, \psi 
	\rangle_{\dot{H}^1(\delta)} = \langle (G_n^2)^{-1}  G_n^1 e^{it_\infty 
		\Delta}\phi, \psi \rangle_{\dot{H}^1(\delta)} + o(1),
	\end{align*}
	and under the assumption~\eqref{ch4:e:equiv_orthogonal_frames_e1}, the 
	operator $(G_n^2)^{-1} G_n^1$ converges in weak $\dot{H}^1$ to 
	zero. 
	
	Now suppose the frames are equivalent. This implies that 
	$(\lambda_n^1)^{-2} (t_n^2 - t_n^1) \to t_\infty \in \mf{R}$. If 
	$\lambda_\infty^1 \in (0, \infty)$ and $x_\infty^1 \in \mf{R}^3$, then 
	$t_n \to (\lambda_\infty^1)^{-2}t_\infty \in \mf{R}$, $\lambda_n^2 \to 
	\lambda_\infty^2 \in (0, \infty)$, $x_n^2 \to x_\infty^2 \in \mf{R}^3$, and 
	$(G_n^2)^{-1} e^{i(t_n^2 - t_n^1)\Delta_g} G_n^1$ converges strongly to 
	$(G_\infty^2)^{-1} e^{it_\infty \Delta_g} G_\infty^1 \phi$ where 
	$G_\infty^j$ is the scaling and translation operator corresponding to 
	$(\lambda_\infty^j, x_\infty^j)$. For all other values of 
	$\lambda_\infty^1$ and $x_\infty^1$, we appeal to 
	Theorem~\ref{ch4:t:convergence_of_propagators} to see that
	\begin{align*}
	(G_n^2)^{-1} e^{i(t_n^2 - t_n^1)\Delta_g} G_n^1 \to G_\infty e^{it_\infty 
		\Delta}
	\end{align*}
	where
	$G_\infty$ is the scaling and translation operator associated to the 
	parameters $(\lambda_\infty, \sqrt{\lambda_\infty} x_\infty)$, and
	\begin{align*}
	\lambda_\infty = \lim_{n \to \infty} \frac{ \lambda_n^1 }{ \lambda_n^2 }, 
	\quad x_\infty = \lim_{n \to \infty} \frac{x_n^1 - 
		x_n^2}{\sqrt{\lambda_n^1 \lambda_n^2}}.
	\end{align*}
	In both cases the limiting operator is clearly invertible.
\end{proof}

\begin{prop}[Linear profile decomposition]
	\label{ch4:p:lpd}
	Let $f_n$ be a bounded sequence in $\dot{H}^1$. After
	passing to a subsequence, there exist $J^* \in \{1, 2, \dots\} \cup
	\{\infty\}$, profiles $\phi^j$, and parameters $(\lambda_n^j, t_n^j, 
	x_n^j)$ such
	that for each finite $J$ we have a decomposition
	\begin{align*}
	f_n = \sum_{j=1}^J e^{-it_n^j \Delta_g} G_n^j \phi^j + r_n^J,
	\end{align*}
	where $G_n^j \phi(x) = (\lambda_n^j)^{-\frac{1}{2}} \phi(\tfrac{\cdot - 
		x_n}{\lambda_n^j})$, which satisfies the following properties:
	\begin{gather}
	(G_n^J)^{-1} r^J_n \rightharpoonup 0 \text{ in } \dot{H}^1 
	\label{ch4:e:lpd_wlimit}.\\
	\lim_{J \to J^*} \limsup_{n \to \infty} \| e^{it\Delta_g} r_n^J\|_{L^\infty
		L^6} = 0. \label{ch4:e:lpd_vanishing_remainder}\\
	E(f_n) = \sum_{j=1}^J E(\phi^j_n) + E(r^J_n) + o(1) \text{ as } n \to
	\infty. \label{ch4:e:lpd_energy_decoupling}\\
	\frac{\lambda_n^j}{\lambda_n^k} + \frac{\lambda_n^k}{\lambda_n^j} + \frac{ 
		|x_n^j - x_n^k|}{ \sqrt{\lambda_n^j \lambda_n^k}} + \frac{ |t_n^j - 
		t_n^k | 
	}{ \lambda_n^j \lambda_n^k} \to \infty \textrm{ for all } j \ne k. 
	\label{ch4:e:lpd_params}
	\end{gather}
	Moreover, the times $t^j_n$
	may be chosen for each $j$
	so that either $t^j_n \equiv 0$
	or $\lim_{n \to \infty} (\lambda_n^j)^{-2} t^j_n \to \pm \infty$.
\end{prop}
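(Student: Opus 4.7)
The argument is the standard iterative extraction: apply Proposition~\ref{ch4:p:inv_str} repeatedly to the successive remainders. Start with $r_n^0 = f_n$ and define $\varepsilon_J := \limsup_n \|e^{it\Delta_g} r_n^J\|_{L^\infty L^6}$. If $\varepsilon_J = 0$, stop with $J^* = J$. Otherwise, pass to a subsequence along which the limsup is achieved, apply Proposition~\ref{ch4:p:inv_str} to $r_n^J$ to produce a profile $\phi^{J+1} \in \dot{H}^1$ and a frame $(\lambda_n^{J+1}, t_n^{J+1}, x_n^{J+1})$ satisfying $(G_n^{J+1})^{-1} e^{it_n^{J+1} \Delta_g} r_n^J \rightharpoonup \phi^{J+1}$, with either $t_n^{J+1} \equiv 0$ or $(\lambda_n^{J+1})^{-2} t_n^{J+1} \to \pm \infty$, and define $r_n^{J+1} := r_n^J - e^{-it_n^{J+1} \Delta_g} G_n^{J+1} \phi^{J+1}$. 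Either the process terminates at some finite $J^*$, or it runs forever and we take $J^* = \infty$, passing to a diagonal subsequence in the usual way.

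The energy-decoupling \eqref{ch4:e:inv_str_kinetic_energy_decoupling} applied inductively yields
\[
\|f_n\|_{\dot H^1}^2 \;=\; \sum_{j=1}^J \|\phi^j\|_{\dot H^1(g_n^j)}^2 \;+\; \|r_n^J\|_{\dot H^1}^2 \;+\; o(1),
\]
and similarly the $L^6$ decoupling gives the full energy decoupling \eqref{ch4:e:lpd_energy_decoupling} (after checking that $\|\phi^j\|_{\dot H^1(g_n^j)} \to \|\phi^j\|_{\dot H^1}$ using the convergence of rescaled metrics, exactly as in the proof of Proposition~\ref{ch4:p:inv_str}). The quantitative bound \eqref{ch4:e:inv_str_profile_positive_energy} gives $\|\phi^{J+1}\|_{\dot H^1}^2 \gtrsim \varepsilon_J^{9/2} A^{-5/2}$ with $A = \sup_n \|f_n\|_{\dot H^1}$; since the total energy is bounded, this forces $\sum_J \varepsilon_J^{9/2} < \infty$, hence $\varepsilon_J \to 0$, which is \eqref{ch4:e:lpd_vanishing_remainder}.

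The main obstacle is the pairwise orthogonality \eqref{ch4:e:lpd_params} of the frames; this is where Lemma~\ref{ch4:l:equiv_orthogonal_frames} enters decisively. I propose to carry as an inductive hypothesis not only that the frames $1, \dots, J$ are pairwise orthogonal, but also the stronger statement that for every $k \le J$ one has $(G_n^k)^{-1} e^{it_n^k \Delta_g} r_n^J \rightharpoonup 0$ in $\dot H^1$. Suppose the frame extracted at step $J+1$ is equivalent to some earlier frame $k$. Then by Lemma~\ref{ch4:l:equiv_orthogonal_frames},
\[
(G_n^k)^{-1} e^{it_n^k \Delta_g} \bigl( e^{-it_n^{J+1} \Delta_g} G_n^{J+1} \phi^{J+1} \bigr) \;\longrightarrow\; U_\infty \phi^{J+1}
\]
strongly with $U_\infty$ injective, while the inductive hypothesis gives $(G_n^k)^{-1} e^{it_n^k \Delta_g} r_n^J \rightharpoonup 0$; subtracting, we find that $(G_n^k)^{-1} e^{it_n^k \Delta_g} r_n^{J+1}$ has nonzero weak limit $-U_\infty \phi^{J+1}$, contradicting the extension of the inductive hypothesis to step $J+1$. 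Conversely, when the new frame is orthogonal to each prior frame, the orthogonal case of Lemma~\ref{ch4:l:equiv_orthogonal_frames} together with the defining relation for $r_n^{J+1}$ propagates the weak limit $0$ to stage $J+1$, closing the induction.

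Once orthogonality is established, \eqref{ch4:e:lpd_wlimit} follows directly from the defining weak-limit property at stage $J$, and the side condition on the times $t_n^j$ is inherited from Proposition~\ref{ch4:p:inv_str}. Thus I expect the only nonroutine part to be the simultaneous inductive propagation of pairwise orthogonality and the weak-limit-zero property in every earlier frame, for which the case analysis in Lemma~\ref{ch4:l:equiv_orthogonal_frames} is the critical input.
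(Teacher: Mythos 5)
Your proposal follows the same overall path as the paper: iteratively apply Proposition~\ref{ch4:p:inv_str}, derive $\varepsilon_J \to 0$ from the lower energy bound, and invoke Lemma~\ref{ch4:l:equiv_orthogonal_frames} to rule out non-orthogonal frames. The energy accounting and the treatment of the side condition on $t_n^j$ are all fine.

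There is, however, a circularity in the way you phrase the orthogonality argument. You assume the new frame $J+1$ is equivalent to an earlier frame $k$, and then derive that $(G_n^k)^{-1} e^{it_n^k\Delta_g} r_n^{J+1}$ has nonzero weak limit $-U_\infty\phi^{J+1}$, calling this ``a contradiction with the extension of the inductive hypothesis to step $J+1$.'' But the weak-limit-zero statement at step $J+1$ is precisely what you are trying to establish, not something you may assume and then contradict. You cannot obtain a contradiction from a statement you have not yet shown; the argument as written proves nothing. The fix is straightforward and is essentially what the paper does: use the inductive hypothesis you do have, namely $(G_n^k)^{-1}e^{it_n^k\Delta_g} r_n^J \rightharpoonup 0$ at step $J$, and transport this through the strongly convergent operator $(G_n^{J+1})^{-1}e^{i(t_n^{J+1}-t_n^k)\Delta_g} G_n^k$ (whose limit is invertible by Lemma~\ref{ch4:l:equiv_orthogonal_frames}) to conclude $(G_n^{J+1})^{-1}e^{it_n^{J+1}\Delta_g} r_n^J \rightharpoonup 0$. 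Since by construction this same quantity converges weakly to $\phi^{J+1}$, one gets $\phi^{J+1}=0$, contradicting the nontriviality bound~\eqref{ch4:e:inv_str_profile_positive_energy}. That is the genuine contradiction, and once the new frame is orthogonal to all prior ones, the inductive hypothesis propagates as you describe.

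For comparison, the paper's proof does not explicitly maintain the auxiliary inductive hypothesis ``$(G_n^k)^{-1}e^{it_n^k\Delta_g}r_n^J\rightharpoonup 0$ for all $k\le J$''; instead it selects $j<k$ with $k$ minimal such that frames $j$ and $k$ fail to be orthogonal, telescopes $r_n^{j-1}$ into the profiles from $j$ to $k$ plus a tail, takes weak limits in frame $j$, and concludes $\phi^k=0$. Minimality of $k$ plays the role that your strengthened inductive hypothesis plays. Both routes are valid once the contradiction is aimed at the nontriviality of the offending profile rather than at an unproven statement.
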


\begin{proof}
	We iteratively apply Proposition~\ref{ch4:p:inv_str} to construct the 
	profiles. 
	Let $r^0_n = f_n$. Passing to a subsequence, we may assume the existence of 
	the limits
	\begin{align*}
	A_J = \lim_{n \to \infty} \| r^J_n\|_{\dot{H}^1}, \quad \varepsilon_J = 
	\lim_{n \to \infty} \| e^{it\Delta_g} r^J_n\|_{L^\infty L^6}.
	\end{align*}
	If $\varepsilon_J = 0$ then stop and set $J* = J$. Otherwise, apply 
	Proposition~\ref{ch4:p:inv_str} to the sequence $r^J_n$ to obtain a set of 
	parameters $(t^{J+1}_n, x^{J+1}_n, \lambda^{J+1}_n$ and a profile
	\begin{align}
	\label{ch4:e:lpd_profile_construction}
	\phi^{J+1} = \opn{w-lim} (G_n^{J+1})^{-1} e^{it^{J+1}_n \Delta} r^J_n, 
	\quad \phi^{J+1}_n = G_n^{J+1} \phi^{J+1}.
	\end{align}
	Set  $r^{J+1}_n = r^J_n - G_n^{J+1} \phi^{J+1}$, and continue the procedure 
	replacing $J$ by $J+1$. 
	
	If $\varepsilon^J$ never equals zero, then set $J^* = \infty$. In this 
	case, the kinetic energy 
	decoupling~\eqref{ch4:e:inv_str_kinetic_energy_decoupling}, the lower bound 
	\eqref{ch4:e:inv_str_profile_positive_energy} imply 
	\begin{align*}
	A_{J+1}^2 \le A_J^2 \Bigl[ 1 - c \bigl(\frac{\varepsilon_J}{A_J} 
	\bigr)^{\frac{9}{2}} \bigr]
	\end{align*}
	which in view of the Sobolev embedding $\varepsilon_J \le c A_J$ compels 
	$\varepsilon_J \to 0$ as $J \to \infty$. 
	
	It remains to verify the 
	decoupling of parameters.

	Suppose~\eqref{ch4:e:lpd_params} failed. Choose $j < k$ with $k$ minimal 
	such 
	that the 
	frames $(\lambda_n^j, t_n^j, x_n^j)$ and $(\lambda_n^k, t_n^k, x_n^k)$ are 
	not orthogonal. 
	After passing to a subsequence, we may arrange for the frames 
	$(\lambda_n^j, 
	t_n^j, x_n^j)$, $(\lambda_n^\ell, t_n^\ell, x_n^\ell)$ to be equivalent 
	when 
	$\ell = k$ and orthogonal for $j < \ell < k$.  By construction,
	\begin{align*}
	r_n^{j-1} = e^{-it_n^j\Delta_g} G_n^j \phi^j + e^{-it_n^k \Delta_g} G_n^k 
	\phi^k + \sum_{j < \ell < k} e^{-it_n^\ell \Delta_g} G_n^\ell \phi^\ell,
	\end{align*}
	hence
	\begin{align*}
	(G_n^j)^{-1} e^{it_n^j\Delta_g} r_n^{j-1} = \phi^j + (e^{-it_n^j\Delta_g} 
	G_n^j)^{-1} e^{-it_n^k \Delta_g} G_n^k \phi^k + \sum_{j < \ell < k} 
	(e^{-it_n^j\Delta_g} G_n^j)^{-1} e^{-it_n^\ell \Delta_g} G_n^\ell \phi^\ell.
	\end{align*}

	By Lemma~\ref{ch4:l:equiv_orthogonal_frames}, $U_\infty = 
	\lim_{n \to \infty} (e^{-it_n^j\Delta_g} 
	G_n^j)^{-1} e^{-it_n^k \Delta_g} G_n^k$ is an invertible operator on 
	$\dot{H}^1$, and we obtain
	\begin{align*}
	\phi^j = \phi^j + U_\infty \phi^k.
	\end{align*}
	Thus $\phi^k = 0$, contrary to the nontriviality of the profile guaranteed 
	by~\eqref{ch4:e:inv_str_profile_positive_energy}.
\end{proof}



\section{Euclidean nonlinear profiles}
\label{ch4:s:euclidean_profiles}

\begin{prop}
	\label{ch4:p:emb_profiles}
	Let $(\lambda_n, t_n, x_n)$ be a frame such that $\lambda_n \to 
	\lambda_\infty \in [0, 
	\infty]$, $x_n \to x_\infty \in \mf{R}^3 \cup \{\infty\}$, and either $t_n 
	\equiv 0$ or 
	$\lambda_n^{-2} t_n \to \pm \infty$. Assume that the limiting parameters 
	conform to one of the 
	following scenarios:
	\begin{itemize}
		\item [(i)] $\lambda_\infty = \infty$.
		\item [(ii)] $x_\infty = \infty$.
		\item [(iii)] $x_\infty \in \mf{R}^3$, $\lambda_\infty = 0$.
	\end{itemize}
	Then, for $n$ sufficiently large, there exists a unique global solution 
	$u_n$ 
	to the equation~\eqref{ch4:e:main_eq} with $u_n(0) = e^{-it_n \Delta_g} G_n 
	\phi$ and which 
	also has finite global Strichartz norm
	\begin{align*}
	\| \nabla u_n \|_{ L^{10} L^{\frac{30}{13}} (\mf{R} \times \mf{R}^3) } \le 
	C( E(u_n(0) ) ).
	\end{align*}
	Moreover, for any $\varepsilon > 0$ there exists $\psi^\varepsilon \in 
	C^\infty_0(\mf{R} \times \mf{R}^3)$ such that
	\begin{align*}
	\limsup_{n \to \infty} \| \nabla \bigl[ u_n - G_n \psi^{\varepsilon}( 
	\lambda_n^{-2} 
	(t-t_n) ) \bigr ]\|_{ L^{10} L^{\frac{30}{13}} (\mf{R} \times \mf{R}^3) } < 
	\varepsilon.
	\end{align*}
	In particular, by Sobolev embedding the spacetime bound and approximation 
	statement hold in $Z = L^{10} L^{10}$ as well.
\end{prop}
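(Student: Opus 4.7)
The plan is to construct, in each scenario, an approximate solution $\tilde{u}_n$ to the curved equation by rescaling a nonlinear Euclidean profile, and then invoke Proposition~\ref{ch4:p:stability} to upgrade to a true solution $u_n$ with the desired Strichartz bound. Throughout I use as a black box the Colliander-Keel-Staffilani-Takaoka-Tao theorem on the Euclidean quintic NLS, which produces, for any $\psi \in \dot{H}^1$, a global solution $v$ with $\|\nabla v\|_{L^{10}L^{30/13}} \le C(E(\psi))$ that scatters in $\dot{H}^1$ as $t \to \pm\infty$. The corresponding wave operators will let me prescribe asymptotic linear profiles. The compactly supported approximant $\psi^\varepsilon$ at the end of the statement will come from density of $C^\infty_0$ in Strichartz space and one further application of stability.

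\textbf{Cases (i) and (ii).} Choose a Euclidean nonlinear profile $v$ as follows: if $t_n \equiv 0$, take $v(0) = \phi$; if $\lambda_n^{-2} t_n \to \pm\infty$, take $v$ satisfying $\|v(t) - e^{it\Delta}\phi\|_{\dot{H}^1} \to 0$ as $t \to \mp\infty$ (wave operator). Set
\[
\tilde{u}_n(t,x) = \lambda_n^{-1/2} v\bigl(\lambda_n^{-2}(t-t_n),\,\lambda_n^{-1}(x-x_n)\bigr),
\]
which solves $(i\partial_t + \Delta)\tilde{u}_n = |\tilde{u}_n|^4 \tilde{u}_n$ by scaling symmetry. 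Its error as a solution to the $g$-equation is $e_n = (\Delta_g - \Delta)\tilde{u}_n$, supported where $g\ne\delta$, i.e.\ in the unit ball. In case (i), the enormous length scale $\lambda_n$ forces every derivative of $\tilde{u}_n$ to pick up a factor of $\lambda_n^{-1}$, and in case (ii) the spatial translation carries the bulk of $\tilde{u}_n$ away from the support of $\Delta_g-\Delta$; the resulting bound $\|\nabla e_n\|_{N(\mf{R})} \to 0$ is obtained by essentially the computations in the proof of Theorem~\ref{ch4:t:convergence_of_propagators}. Theorem~\ref{ch4:t:convergence_of_propagators} also gives $\|\tilde{u}_n(0) - u_n(0)\|_{\dot{H}^1} \to 0$ (via the propagator convergence and, in the $t_n\ne 0$ case, the definition of the wave operator), so Proposition~\ref{ch4:p:stability} produces $u_n$ together with the claimed bound.

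\textbf{Case (iii).} Here $\lambda_n \to 0$ and $x_n \to x_\infty \in \mf{R}^3$, and a single Euclidean approximation will not suffice on all of $\mf R$. Instead I split the time axis into a \emph{Euclidean window} $|t-t_n| \le T\lambda_n^2$ and a \emph{dispersive regime} $|t-t_n| > T\lambda_n^2$ and paste together two different approximations, eventually sending $T \to \infty$. On the Euclidean window the metric is essentially frozen to $g(x_\infty)$, so the rescaled solution to the constant-coefficient quintic NLS with principal part $g^{jk}(x_\infty)\partial_j \partial_k$ (reducible to the flat equation by a linear change of coordinates and therefore covered by CKSTT) approximates $u_n$; matching of initial data is given by Corollary~\ref{ch4:c:concentrating_profile_strong_conv} and, when $\lambda_n^{-2}t_n \to \pm\infty$, by the Euclidean wave operator. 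On the dispersive regime, the key input is the extinction estimate of Proposition~\ref{ch4:p:extinction}: it ensures that $\|e^{i(t-t_n)\Delta_g}\phi_n\|_{L^\infty L^6}$ is arbitrarily small once $T$ is large, which via Strichartz and Sobolev upgrades to smallness in $Z$. Hence on the dispersive regime the nonlinearity is perturbative and the linear propagator applied to the boundary data from the Euclidean window is an acceptable approximate solution. Gluing at $t = t_n \pm T\lambda_n^2$ and invoking Proposition~\ref{ch4:p:stability} completes this case.

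\textbf{Principal obstacle and the compactly supported approximation.} The main difficulty is case (iii): without the extinction lemma, the linear tail after the Euclidean window could retain large $L^{10}_{t,x}$ norm and the stability argument would collapse, so Proposition~\ref{ch4:p:extinction} is the essential geometric input. A second subtlety is the boundary matching at $|t-t_n|=T\lambda_n^2$, which requires Euclidean asymptotic completeness to ensure the nonlinear Euclidean profile has effectively scattered to a linear one there. Given these constructions, the approximation statement is routine: for any $\varepsilon > 0$, truncate the Euclidean profile $v$ in spacetime by a $C^\infty_0$ function $\psi^\varepsilon$ within $\varepsilon$ in $\|\nabla \cdot\|_{L^{10}L^{30/13}}$ (possible by the global Strichartz bound on $v$ and density), then rescale and translate to match the frame and absorb the remaining discrepancy by another application of Proposition~\ref{ch4:p:stability}.
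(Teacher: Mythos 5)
Your overall strategy is correct — build an approximate solution from rescaled limiting profiles and close via Proposition~\ref{ch4:p:stability} — and your identification of the extinction lemma as the essential input in case (iii) is right. But the proposal glosses over the single technical device that makes the error estimates go through for data that is merely in $\dot{H}^1$, namely the \emph{spatial and frequency cutoffs}. The paper defines $\tilde{u}_n = \chi_n P_n v_n$ on the Euclidean window, where $\chi_n$ is a spatial cutoff at a scale between $\lambda_n$ and $1$ (e.g.\ $\chi((d_n+\lambda_n)^{1/3}(x-x_n)/\lambda_n)$ in case (iii)) and $P_n$ a corresponding Littlewood–Paley projection. These serve two independent purposes: $\chi_n$ truncates the tails of $v_n$ to a region where $g$ differs only slightly from the frozen metric (without it, $g - g(x_\infty)$ is $O(1)$ where the tails of $v_n$ live), and $P_n$ caps the number of derivatives one has to pay for, which matters because the error $(\Delta_g - \Delta)\tilde u_n$ involves up to three derivatives while $v$ only has one.

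This second point is where the proposal has a genuine gap. You write that in case (i), "every derivative of $\tilde u_n$ picks up a factor of $\lambda_n^{-1}$" and that the error bound follows "by essentially the computations in the proof of Theorem~\ref{ch4:t:convergence_of_propagators}." Tracing through the scaling in case (i) without cutoffs, one finds $\|\chi \nabla \tilde u_n\|_{L^2 L^{6/5}} = \lambda_n^2 \|\chi(\lambda_n\cdot) \nabla v\|_{L^2 L^{6/5}}$; bounding this by H\"older costs $\|\nabla v\|_{L^2 L^\infty}$, which requires roughly $\phi \in \dot H^{5/2}$, far more than the $\dot H^1$ hypothesis gives, and bounding it with only $\nabla v \in L^2 L^6$ yields $O(1)$, not $o(1)$. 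The paper's $P_n$ is exactly what lets one trade $L^6$ for $L^\infty$ at the cost of a vanishing power of $\lambda_n$ via Bernstein. The analogous issue appears in every scenario. Moreover, the computations in Theorem~\ref{ch4:t:convergence_of_propagators} pertain to the \emph{linear} propagator applied to Schwartz data (where the Euclidean dispersive estimate does the heavy lifting); they do not transfer to the nonlinear solution $v$, which at best has Strichartz bounds and not pointwise decay. Absent the cutoffs, one would need either to upgrade $\phi$ to much higher regularity — which cannot be done by density because the higher norms of the nonlinear $v$ are not uniformly controllable — or to supply a separate persistence-of-regularity argument; neither is present in the proposal. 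The spatial cutoff is equally essential in case (iii), since the constant-metric approximation $g \approx g(x_\infty)$ is only valid near $x_\infty$ and the tails of $v_n$ escape that region. Once the cutoff construction is in place, the rest of your outline (time-splitting, extinction lemma, gluing, compactly supported approximant) aligns with the paper's proof.
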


\begin{proof}
	In each 
	regime, for $n$ large one expects the solution to the variable-coefficient 
	equation~\eqref{ch4:e:main_eq} to resemble a solution to a constant 
	coefficient 
	NLS
	\begin{align*}
	i\partial_t  u = -\Delta u + |u|^4 u
	\end{align*}
	where $\Delta$ is the Laplacian for a limiting geometry. In the 
	first two cases, the metric is the standard one on $\mf{R}^3$, 
	while in the last case the geometry is given by the constant metric 
	$g(x_\infty)$. At any rate, all finite-energy solutions to the limiting 
	equations are known to scatter~\cite{ckstt}. We use these solutions to 
	build good approximate solutions to~\eqref{ch4:e:main_eq}. As the former 
	obey 
	good
	spacetime bounds, we deduce by stability theory that the same is true 
	of the actual solutions to~\eqref{ch4:e:main_eq}. 
	
	Let $g_\infty = g(x_\infty)$ in the last case and $g_\infty = \delta$ in 
	all other cases, and denote by $\Delta$ the associated Laplacian.

	If $t_n \equiv 0$, let $v$ be the global scattering solution to the 
	constant coefficient defocusing NLS
	\begin{align}
	\label{ch4:e:const_coeff_nls}
	(i\partial_t + \Delta) v = |v|^4v
	\end{align}
	with $v(0) = \phi$. If $\lambda_n^{-2} t_n \to \pm \infty$, let $v$ instead 
	be the unique solution to the above equation such that
	\begin{align*}
	\lim_{t \to \mp \infty} \| v(t) - e^{it \Delta} \phi \|_{\dot{H}^1} 
	= 0.
	\end{align*}
	In all cases, the Euclidean solution enjoys the global in time 
	spacetime 
	bounds
	\begin{align}
	\label{ch4:e:emb_profile_euclidean_scattering}
	\| \nabla v\|_{L^2 L^6 \cap L^\infty L^2 } \le 
	C(E(\phi)) < \infty.
	\end{align}
	See~\cite[Lemma 3.11]{matador}.
	
	Fix a small parameter $0 < \theta \ll 1$, and let $\chi$ be a smooth bump 
	function 
	equal to $1$ on the unit ball. Define spatial and Fourier space cutoffs
	$\chi_n$ and $P_n$ as follows.
	
	If $\lambda_n \to 0$ and $x_n \to x_\infty \in \mf{R}^3$, let $d_n = |x_n - 
	x_\infty|$ 
	and define
	\begin{align*}
	\chi_n = \chi \Bigl( \frac{ (d_n + \lambda_n)^{1/3} (x-x_n)}{\lambda_n} 
	\Bigr), 
	\ P_n = \chi( \lambda_n (\lambda_n + d_n)^{1/6} D).
	\end{align*}
	
	If $\lambda_n \to 0$ and $|x_n| \to \infty$, let
	\begin{align*}
	\chi_n = \chi\Bigl( \frac{x-x_n}{\lambda_n^{2/3}} \Bigr), \ P_n = \chi( 
	\lambda_n^{4/3} D).
	\end{align*}
	
	If $\lambda_n \to \lambda_\infty \in (0, \infty)$ and $d_n = |x_n| \to 
	\infty$, 
	set 
	\begin{align*}
	\chi_n = \chi\Bigl(\frac{x-x_n}{d_n^{1/2}} \Bigr), \ P_n = \chi(d_n^{1/2}D).
	\end{align*}
	
	If $\lambda_n \to \infty$, set
	\begin{align*}
	\chi_n = \chi \Bigl( \frac{x-x_n}{\lambda_n^{4/3}} \Bigr), \ P_n = \chi( 
	\lambda_n^{5/6}D).
	\end{align*}
	There is of course some latitude in the choice of exponents. Define the 
	rescaled Euclidean solutions
	\begin{align*}
	v_n(t) = \lambda_n^{-1/2} v( \lambda_n^{-2}t ,  \lambda_n^{-1}(\cdot-x_n)) 
	= G_n v(\lambda_n^{-2} t).
	\end{align*}
	
	For $T > 0$ to be chosen later, set
	\begin{align*}
	\tilde{u}_n = \left\{\begin{array}{ll} \chi_n P_n v_n, & |t| \le T 
	\lambda_n^2\\
	e^{i(t-T \lambda_n^2) \Delta_g} \tilde{u}_n (T \lambda_n^2),  & t \ge T 
	\lambda_n^2\\
	e^{i(t+T\lambda_n^2)\Delta_g} \tilde{u}_n (-T \lambda_n^2), & t \le 
	-T\lambda_n^2\end{array}\right.
	\end{align*}
	
	In the next two lemmas we prepare to invoke 
	Proposition~\ref{ch4:p:stability}.
	\begin{lma}
		\label{ch4:l:embedding_profiles_eqn_error}
		\begin{align*}
		\lim_{T \to \infty} \limsup_{n \to \infty} \| \nabla [ (i\partial_t + 
		\Delta_g) \tilde{u}_n - 
		F(\tilde{u}_n) ] \|_{N} \to 0.
		\end{align*}
	\end{lma}
	\begin{lma}
		\label{ch4:l:embedding_profiles_data}
		\begin{align*}
		\lim_{T \to \infty} \limsup_{n \to \infty} \| \tilde{u}_n(-t_n) - 
		e^{-it_n \Delta_g} G_n \phi \|_{\dot{H}^1} = 0
		\end{align*}
	\end{lma}
	
	\begin{proof}[Proof of Lemma~\ref{ch4:l:embedding_profiles_eqn_error}]
		We estimate 
		separately the 
		contributions on $\{ |t| \le T \lambda_n^2\}$ and $\{ |t| > T 
		\lambda_n^2\}$.
		
		\textbf{The Euclidean window}. 
		When $|t| \le T \lambda_n^2$, write
		\begin{align*}
		\label{ch4:e:emb_profiles_short_time_error}
		&(i\partial_t + \Delta_g) \tilde{u}_n - F(\tilde{u}_n) \\
		&= \chi_n P_n 
		(i\partial_t + \Delta) v_n + (\Delta_g - \Delta) \chi_n 
		P_n v_n + [i\partial_t + \Delta, \chi_n P_n] v_n - F(\chi_n P_n 
		v_n)\\
		&= (\Delta_g - \Delta) \chi_n P_n v_n + [ \Delta, \chi_n 
		P_n] v_n + \chi_n P_n F (v_n) - F(\chi_n P_n v_n)\\
		&= (a) + (b) + (c).
		\end{align*}
		
		Consider first the scenario where $\lambda_\infty = 0$ and $x_\infty 
		\in \mf{R}^3$. 	
		We have
		\begin{align*}
		&\| \nabla (\Delta_g- \Delta) \chi_n P_n v_n \|_{L^1 
			L^2} \\
		&\lesssim \| (g^{jk} - g^{jk}(x_\infty))  \nabla \partial_j 
		\partial_k 
		\chi_n P_n v_n \|_{L^1 L^2} + \| (\partial g^{jk}) \partial_j 
		\partial_k 
		\chi_n P_n v_n \|_{L^1 L^2} \\
		&+ \| \nabla  g^{jk} \Gamma_{jk}^m 
		\partial_m \chi_n P_n v_n \|_{L^1 L^2}
		\end{align*}	
		By H\"{o}lder in time and the definition of the cutoffs, the first 
		term is bounded by
		\begin{align*}
		\Bigl( \frac{\lambda_n}{ (\lambda_n + d_n)^{1/3}} + d_n \Bigr) 
		(T\lambda_n^2) \lambda_n^{-2}  (\lambda_n + 
		d_n)^{-1/3} \| v_n\|_{ L^\infty \dot{H}^1} \le T (\lambda_n + 
		d_n)^{1/3} \| v\|_{L^\infty \dot{H}^1} \to 0.
		\end{align*}
		Similarly, the second and third terms are at most
		\begin{align*}
		(T \lambda_n^2)  \lambda_n^{-1} (\lambda_n + d_n)^{-1/6} 
		\|v_n\|_{L^\infty \dot{H}^1} \le T \lambda_n (\lambda_n + d_n)^{-1/6} 
		\|v\|_{L^\infty \dot{H}^1} \to 0.
		\end{align*}
		Hence $(a)$ is acceptable.
		
		Next, we have by H\"{o}lder and Sobolev embedding
		\begin{align*}
		\|\nabla [ \Delta, \chi_n P_n ] v_n \|_{L^1 L^2} &\le \| 
		\nabla (\Delta \chi_n) P_n v_n \|_{L^1 L^2} + \| \nabla \langle 
		\nabla \chi_n, 
		\nabla P_n v_n \rangle \|_{L^1 L^2}\\
		&\lesssim T (\lambda_n + d_n)^{\frac{2}{3}} \| \nabla v\|_{L^\infty 
			L^2} + T (d_n + \lambda_n)^{\frac{1}{6}} \| \nabla v\|_{L^\infty 
			L^2}.
		\end{align*}
		Thus $(b)$ is also acceptable.
		
		To bound the nonlinear commutator $(c)$, write
		\begin{align*}
		\chi_n P_n F(v_n) - F(\chi_n P_n v_n) = (\chi_n P_n - 1) F(v_n) + 
		F(v_n) - F(\chi_n P_n v_n).
		\end{align*}
		Estimate
		\begin{align*}
		&\| \nabla (1 - \chi_n P_n) F(v_n) \|_{L^2 L^{6/5}} \\
		&\le \| \nabla 
		(1-\chi_n) F(v_n) \|_{L^2 L^{6/5}} + \|( \nabla \chi_n )(1 - P_n) 
		F(v_n) \|_{L^2 L^{6/5}}\\
		&+ \| (1-\chi_n)  (1 - P_n) \nabla F(v_n) \|_{L^2 
			L^{6/5}}.
		\end{align*}
		By a change of variable, the last term is at most
		\begin{align*}
		\| (1-P_n)\nabla F(v_n)\|_{L^2 L^{6/5} ([-T\lambda_n^2, T 
			\lambda_n^2])} = \| (1- \tilde{P}_n) \nabla F(v) \|_{L^2 L^{6/5} ( 
			[-T, 
			T])},
		\end{align*}
		where $\tilde{P}_n = \chi( (\lambda_n + d_n)^{\frac{1}{6}} D)$, which 
		goes to zero by the estimate
		\begin{align*}
		\| \nabla F(v)\|_{L^2 L^{6/5}} \lesssim \| v\|_{L^{10} L^{10}}^4 \| 
		\nabla v\|_{L^{10} L^{\frac{30}{13}}} < C(E(\phi))
		\end{align*}
		and dominated convergence.
		
		By H\"{o}lder and Sobolev embedding, the first two terms are bounded by
		\begin{align*}
		(\lambda_n + d_n)^{1/3} \lambda_n^{-1} (T\lambda_n^2)^{\frac{1}{2}} \| 
		v_n\|_{L^\infty L^6}^5 \lesssim T^{\frac{1}{2}} (\lambda_n + d_n)^{1/3} 
		\| \nabla 
		v\|_{L^\infty L^2} \to 0
		\end{align*}
		Also, as
		\begin{align*}
		F(v_n) - F(\chi_n P_n v_n) &= (1 - \chi_n P_n)v_n \int_0^1 F_z( 
		(1-\theta)\chi_n P_n v_n + \theta v_n) \, d\theta \\
		&+ 
		\overline{(1-\chi_nP_n)v_n} \int_0^1 F_{\overline{z}} ( 
		(1-\theta)\chi_n P_n v_n + \theta v_n) \, d\theta,
		\end{align*}
		we obtain by the Leibniz rule, H\"{o}lder, Sobolev embedding, and the 
		$L^{p}$ continuity
		of the Littlewood-Paley
		projections
		\begin{align*}
		&\| \nabla [F(v_n) - F(\chi_n P_n v_n)]\|_{L^2 L^{\frac{6}{5}}} 
		\\
		&\lesssim  \| \nabla (1 - \chi_n P_n) v_n\|_{L^{10} L^{\frac{30}{13}}} 
		\| v\|_{L^{10} L^{10}}^4 + \| (1 - \chi_n P_n) v_n\|_{L^{10} L^{10}} \| 
		v_n\|_{L^{10}}^3 \| \nabla \chi_n P_n v_n\|_{L^{10} L^{\frac{30}{13}}}\\
		&\lesssim \|\nabla v\|_{L^{10} L^{\frac{30}{13}}}^4 \| \nabla (1 - 
		\tilde{\chi}_n \tilde{P}_n v)\|_{L^{10} L^{\frac{30}{13}}},
		\end{align*}
		where $\tilde{P}_n = \chi( (\lambda_n + d_n)^{\frac{1}{6}} D)$ and 
		$\chi_n = \chi( (\lambda_n + d_n) x)$. By dominated convergence, this 
		also vanishes as $n \to \infty$.
		
		Now we consider the case where $\lambda_n \to \infty$, and estimate the 
		errors $(a)$, $(b)$, and $(c)$ as before.
		
		Since $\Delta_g - \Delta = (g^{jk} - \delta^{jk}) \partial_j 
		\partial_k - g^{jk} \Gamma^m_{jk} \partial_m$, we have
		\begin{align*}
		\| \nabla (\Delta_g - \Delta) \chi_n P_n v_n \|_{L^2 L^{6/5}} 
		&\le 
		\sum_{j=1}^3 \| \tilde{\chi} \nabla^j \chi_n P_nv_n \|_{L^2 L^{6/5}}
		\end{align*}
		where $\tilde{\chi}$ is a spatial cutoff. As the $v_n$ 
		are being rescaled to low frequencies, the terms with 
		the 
		fewest derivatives applied to $v_n$ are least favorable. Estimate
		\begin{align*}
		\| \tilde{\chi} \nabla \chi_n P_n v_n \|_{L^2 L^{6/5}} &\le \| 
		\tilde{\chi} 
		(\nabla 
		\chi_n) P_n 
		v_n 
		\|_{L^2 L^{6/5}} + \| \tilde{\chi} \nabla P_n v_n\|_{L^2 L^{6/5}}\\
		&\lesssim \lambda_n^{-\frac{4}{3}} (T \lambda_n^2)^{\frac{2}{5}} \| 
		\chi\|_{L^{\frac{15}{11}}} \| P_nv_n \|_{L^{10} L^{10}} + \| 
		\tilde{\chi} 
		\|_{L^{\frac{6}{5}}} \| \nabla P_n v_n\|_{L^2 L^\infty}\\
		&\lesssim T^{\frac{2}{5}} \lambda_n^{-\frac{8}{15}} \| v\|_{L^{10} 
		L^{10}} 
		+ 
		\lambda_n^{-\fr{5}{12}} \| \nabla v\|_{L^2 L^6},
		\end{align*}
		which is acceptable by~\eqref{ch4:e:emb_profile_euclidean_scattering}. 
		Also,
		\begin{align*}
		&\| \nabla [ \Delta, \chi_n P_n] v_n \|_{L^1 L^2} \le \| \nabla 
		(\Delta \chi_n) P_n v_n\|_{L^1 L^2} + \| \nabla \langle \nabla 
		\chi_n, \nabla P_n v_n \rangle \|_{L^1L^2}\\
		&\lesssim \| (\nabla^3 \chi_n) P_n v_n\|_{L^1 L^2} + \| (\nabla^2 
		\chi_n) 
		\nabla 
		P_n v_n\|_{L^1 L^2} + \| (\nabla \chi_n) \nabla^2 P_n v_n\|_{L^1 L^2}\\
		&\lesssim 
		(\lambda_n^{-\frac{4}{3}} )^2 (T \lambda_n^2) \| \nabla P_n v_n 
		\|_{L^\infty L^2} + \chi_n^{-\frac{4}{3}} (T\lambda_n^2) 
		\lambda_n^{-\frac{5}{6}} \| 
		\nabla P_n v_n\|_{L^\infty L^2}\\
		&\lesssim T (\lambda_n^{-\frac{2}{3}} + \lambda_n^{-\frac{1}{6}} ) \| 
		\nabla v\|_{L^\infty L^2}.
		\end{align*}	
		
		Finally, the same argument as above yields
		\begin{align*}
		\|\nabla [ \chi_n P_n F(v_n) - F(\chi_n P_n v_n)] \|_{L^2 
		L^{\frac{6}{5}} } 
		\to 0.
		\end{align*}
		
		The remaining cases $\lambda_\infty < \infty$, $|x_n| \to \infty$ are 
		dealt 
		with similarly.

		\textbf{The long-time contribution}. 
		When $t \ge T \lambda_n^2$, 
		\begin{align*}
		\| \nabla [ (i\partial_t + \Delta_g) \tilde{u}_n - 
		F(\tilde{u}_n)]\|_{L^2 L^{6/5}}
		&\lesssim \| \tilde{u}_n\|_{L^{10} L^{10} ( (T\lambda_n^2, \infty) 
			)}^4 
		\| (-\Delta_g)^{1/2} \tilde{u}_n \|_{L^{10} L^{\frac{30}{13}}}.
		\end{align*}
		The last norm on the right is bounded by Strichartz and energy 
		conservation. To estimate the $L^{10}$ norm, let $v_+ \in 
		\dot{H}^1$ be 
		the forward scattering state for the Euclidean solution $v$, 
		defined by
		\begin{align*}
		\lim_{t \to \infty} \| v(t) - e^{it\Delta} v_+ 
		\|_{\dot{H}^1} = 
		0,
		\end{align*}
		and write $v_{+n} = G_n v_+$. Then
		\begin{align*}
		&\tilde{u}_n (t) = e^{i(t-T\lambda_n^2)\Delta_g} \chi_n P_n 
		v_n(T\lambda_n^2) \\
		&= e^{it\Delta_g} (v_{+n}) + e^{i(t-T\lambda_n^2)\Delta_g} [ 
		e^{iT\lambda_n^2 \Delta} (v_{+n}) - e^{iT \lambda_n^2 
			\Delta_g} 
		(v_{+n}) ]\\
		&+ e^{i(t-T \lambda_n^2)\Delta_g} (\chi_n P_n - 1) v_n 
		(T\lambda_n^2) + 
		e^{i(t-T\lambda_n^2)\Delta_g}[ v_n(T \lambda_n^2) - e^{iT 
			\lambda_n^2 
			\Delta} ( v_{+n})],
		\end{align*}
		and we see that if $T$ is sufficiently large, each term becomes 
		acceptably small for $n$ large. Indeed, by interpolating 
		Theorem~\ref{ch4:t:convergence_of_propagators} or 
		Proposition~\ref{ch4:p:extinction} with a Strichartz estimate,
		\begin{align*}
		\lim_{T \to \infty}\lim_{n \to \infty} \| e^{it\Delta_g} v_{+n} 
		\|_{L^{10} L^{10}( (T\lambda_n^2, \infty) } = 0.
		\end{align*}
		The remaining terms are also acceptable due to Strichartz, 
		Theorem~\ref{ch4:t:convergence_of_propagators}, dominated convergence, 
		and 
		the definition of the scattering state $v_{+n}$. 	
	\end{proof}
	
	\begin{proof}[Proof of Lemma~\ref{ch4:l:embedding_profiles_data}]
		If $t_n \equiv 0$ then there is nothing to prove. So suppose 
		$\lambda_n^{-2} t_n \to \infty$. Recall that by definition,
		\begin{align*}
		\lim_{t \to -\infty} \| v(t) - e^{it\Delta} \phi\|_{\dot{H}^1} = 
		0.
		\end{align*}
		Referring to the definition of $\tilde{u}_n$, for $n$ large enough
		\begin{align*}
		\tilde{u}_n(-t_n) &= e^{-it_n \Delta_g} e^{iT\lambda_n^2 \Delta_g} 
		\chi_n P_n v_n(-T\lambda_n^2) = e^{-it_n \Delta_g} e^{iT \lambda_n^2 
			\Delta_g} G_n v(-T) + r_n\\
		&= e^{-it_n\Delta_g} e^{iT\lambda_n^2 \Delta_g} e^{-iT 
			\lambda_n^2\Delta_g}G_n \phi + r_n\\
		&= e^{-it_n\Delta_g} G_n \phi + r_n,
		\end{align*}
		where, by Theorem~\ref{ch4:t:convergence_of_propagators} and 
		Corollary~\ref{ch4:c:concentrating_profile_strong_conv}, in each line
		\begin{align*}
		\lim_{T \to \infty} \limsup_{n \to \infty} \| r_n\|_{ \dot{H}^1} = 0.
		\end{align*}
	\end{proof}
	
	By the preceding lemmas, for $T$ large enough and $n$ large, the function 
	$\tilde{u}_n(t-t_n, x)$ is a good approximate solution 
	to~\eqref{ch4:e:main_eq} 
	in the sense of Proposition~\ref{ch4:p:stability}. Thus for any 
	$\varepsilon > 
	0$ and all $n$ sufficiently large, there is a unique global solution $u_n$ 
	to~\eqref{ch4:e:main_eq} with
	\begin{align*}
	\|  u_n \|_{Z(\mf{R})} + \| \nabla u_n \|_{L^{10} L^{\frac{30}{13}} (\mf{R} 
		\times \mf{R}^3)  }\le 
	C(E(u_n(0))).
	\end{align*}

	Finally, for any $\varepsilon > 0$ there exists $\psi^\varepsilon \in 
	C^\infty(\mf{R} \times \mf{R}^3)$ such that $\| \nabla(v - 
	\psi^\varepsilon) \|_{L^{10} L^{\frac{30}{13} (\mf{R} \times \mf{R}^3)}} < 
	\varepsilon$. In view of the definition of 
	$\tilde{u}_n$ and the fact that, as proved above,
	\begin{align*}
	\lim_{T \to \infty} \limsup_{n \to \infty}\| \tilde{u}_n 
	\|_{Z([-T\lambda_n^2, T\lambda_n^2]^c)} = 0,
	\end{align*}
	another application of Proposition~\ref{ch4:p:stability} yields
	\begin{align*}
	\lim_{T \to \infty} \limsup_{n \to \infty} \| \nabla u_n \|_{L^{10} 
		L^{\frac{30}{13}} ( [-T\lambda_n^2, T\lambda_n^2]^c \times \mf{R}^3)} = 
		0.
	\end{align*}
	Therefore
	\begin{align*}
	\lim_{T\to \infty} \limsup_{n \to \infty}\| \nabla[ \tilde{u}_n - G_n 
	\psi^\varepsilon(\lambda_n^{-2}t)] \|_{ L^{10} L^{\frac{30}{13}} (\mf{R} 
		\times \mf{R}^3) } \lesssim \varepsilon,
	\end{align*}
	as required.
\end{proof}

\section{Nonlinear profile decomposition and global wellposedness}
\label{ch4:s:nlpd}

In this section, we show that failure of Theorem~\eqref{ch4:t:main_thm} would 
imply 
the existence of an ``almost-periodic" solution in the sense that it remains in 
a 
precompact subset of $\dot{H}^1$. This will already preclude finite time blowup 
and hence prove the global existence part of the theorem. In the next section, 
we rule out almost-periodic solutions under a smallness assumption on the 
metric 
and obtain global spacetime bounds in that setting.

Although we have worked mainly with the $Z = L^{10} L^{10}$ norm, in 
the sequel we 
shall also need 
the stronger 
norm 
\begin{align*}
Y = L^{10} \dot{H}^{1, \frac{30}{13}}.
\end{align*}

Let
\begin{gather*}
\Lambda(E) : \sup \{ \|u\|_{Z(\mf{R})} : E(u) \le E, \ u \text{ solves } 
\eqref{ch4:e:main_eq} \}\\
E_c = \sup \{ E : \Lambda(E) < \infty\}.
\end{gather*}
\begin{gather*}
\Lambda'(E) : \sup \{ \|u\|_{Z(I)} : |I| \le 1, \ E(u) \le E, \ u \text{ 
	solves} \eqref{ch4:e:main_eq} \}\\
E_c' = \sup \{ E : \Lambda'(E) < \infty \}.
\end{gather*} 
The small data theory implies that $E_c, E_c' > 0$. Global existence 
(resp. scattering) would follow if we show that $E'_c < \infty$ (resp. $E_c < 
\infty$).

\begin{prop}
	\label{ch4:p:palais-smale}
	Suppose $E_c < \infty$. Let $u_n$ be a sequence of solutions 
	to~\eqref{ch4:e:main_eq} with $E(u_n) \to 
	E_c$ such that for some sequence of times $t_n$, $\| u_n\|_{Z( (-\infty, 
		t_n))} \to \infty$ and $\| u_n \|_{ Z( (t_n, \infty) )} \to \infty$. 
		Then 
	some subsequence of $u(t_n)$ converges in $\dot{H}^1$.
\end{prop}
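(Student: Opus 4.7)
The plan is to apply the linear profile decomposition (Proposition~\ref{ch4:p:lpd}) to the sequence $u_n(t_n)$. After a time-translation we may assume $t_n=0$ and write
\[
u_n(0) = \sum_{j=1}^J e^{-it_n^j \Delta_g} G_n^j \phi^j + r_n^J,
\]
with asymptotically orthogonal frames~\eqref{ch4:e:lpd_params}, asymptotic energy decoupling~\eqref{ch4:e:lpd_energy_decoupling}, and $\limsup_n \|e^{it\Delta_g} r_n^J\|_{L^\infty L^6}\to 0$ as $J\to J^*$. The goal is to force $J^*=1$, a single profile carrying all the energy, a frame with bounded limiting parameters, $t_n^1\equiv 0$, and $r_n^1\to 0$ strongly in $\dot H^1$. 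From this the claimed convergence of a subsequence of $u_n(0)$ in $\dot H^1$ follows, since $G_n^1\phi^1\to G_\infty^1\phi^1$ strongly by Arzel\`a--Ascoli on the frame parameters.

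The main step is to associate to each linear profile a global nonlinear profile $v_n^j$ solving~\eqref{ch4:e:main_eq} that asymptotically approximates $e^{-it_n^j\Delta_g}G_n^j \phi^j$. For ``escaping'' frames ($\lambda_n^j\to 0$ or $\infty$, or $|x_n^j|\to\infty$), Proposition~\ref{ch4:p:emb_profiles} furnishes such $v_n^j$ with a global $Z$-bound depending only on $E(\phi^j)$. For a frame with bounded limiting parameters ($\lambda_n^j\to\lambda_\infty\in(0,\infty)$, $x_n^j\to x_\infty\in\mf{R}^3$), $v_n^j$ is built from the solution to~\eqref{ch4:e:main_eq} with the limiting initial data (or the corresponding scattering data when $|t_n^j|\lambda_n^{-2}\to\infty$), which has finite $Z$-norm provided $E(\phi^j)<E_c$, by the definition of $E_c$. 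Supposing every $E(\phi^j)<E_c$, assemble the approximate solution
\[
\tilde u_n^J(t) = \sum_{j=1}^J v_n^j(t) + e^{it\Delta_g} r_n^J,
\]
whose $Z$-norm is uniformly bounded in $n$ and $J$: the individual contributions sum up by energy decoupling combined with the $Z$-bounds just described, cross terms decouple via~\eqref{ch4:e:lpd_params}, and the linear remainder is controlled by interpolating its smallness in $L^\infty L^6$ with the Strichartz bound. The same orthogonality shows that the error
\[
(i\partial_t+\Delta_g)\tilde u_n^J - F(\tilde u_n^J) = \sum_{j=1}^J F(v_n^j) - F(\tilde u_n^J)
\]
has $N$-norm tending to zero as $n\to\infty$ then $J\to J^*$: the pure-profile decoupling $F(\sum_j v_n^j)\approx\sum_j F(v_n^j)$ uses orthogonality together with the smooth compactly supported approximation from Proposition~\ref{ch4:p:emb_profiles}, while the mixed terms involving $e^{it\Delta_g}r_n^J$ are absorbed using its smallness in $Z$. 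Proposition~\ref{ch4:p:stability} then yields a uniform bound on $\|u_n\|_{Z(\mf{R})}$, contradicting the hypothesis that the forward and backward spacetime norms diverge.

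Consequently exactly one profile is present with $E(\phi^1)=E_c$, whence energy decoupling forces $\|r_n^1\|_{\dot H^1}\to 0$. Rerunning the argument with $J=1$ rules out an escaping frame, since the nonlinear profile $v_n^1$ would then have globally finite $Z$-norm by Proposition~\ref{ch4:p:emb_profiles}. Likewise $|t_n^1|\lambda_n^{-2}\to\pm\infty$ is ruled out because, in the bounded-parameter case, $v_n^1$ would be essentially linear on one of the half-lines $(-\infty,0)$ or $(0,\infty)$ and hence have finite $Z$-norm there by Strichartz and Sobolev embedding, keeping $\|u_n\|_Z$ bounded on that side by stability. Thus $t_n^1\equiv 0$ and $(\lambda_n^1,x_n^1)$ converges in $(0,\infty)\times\mf{R}^3$, and combining $G_n^1\phi^1\to G_\infty^1\phi^1$ in $\dot H^1$ with $r_n^1\to 0$ in $\dot H^1$ yields the claim. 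The main technical obstacle will be the nonlinear decoupling in the error estimate---controlling cross products of the form $(v_n^j)^{k_1}(v_n^k)^{k_2}$ with $j\neq k$ in the relevant spacetime norms, where one exploits the approximation of each nonlinear profile by a smooth compactly supported function pulled back by its frame (the last assertion of Proposition~\ref{ch4:p:emb_profiles}) and uses change of variables together with~\eqref{ch4:e:lpd_params}.
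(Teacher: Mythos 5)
Your plan is the same Kenig--Merle nonlinear profile decomposition scheme used in the paper, and the overall architecture (profile decomposition, critical profile, escaping/bounded frame dichotomy, stability) is correct. However, there is a genuine gap at the step where you claim the mixed error terms involving $e^{it\Delta_g} r_n^J$ are ``absorbed using its smallness in $Z$.''

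The issue is the term in which the derivative falls on the remainder, i.e.\ the contribution to the $N$-norm of the form
\[
\bigl\| (v_n^j)^4\, \nabla e^{it\Delta_g} r_n^J \bigr\|_{L^2_t L^{6/5}_x}.
\]
The profile decomposition only guarantees that $\limsup_n \|e^{it\Delta_g} r_n^J\|_{L^\infty L^6}\to 0$ (hence smallness in $Z$ by interpolation with Strichartz); it does \emph{not} give any smallness of $\nabla e^{it\Delta_g} r_n^J$ in $L^{10}_t L^{30/13}_x$, which is merely bounded. A direct H\"older estimate therefore only produces $\|v_n^j\|_Z^4 \cdot \|\nabla e^{it\Delta_g}r_n^J\|_Y$, and the second factor does not tend to zero. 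This is precisely the obstruction the paper flags (``Some care is needed to control the interaction between linear and nonlinear profiles'') and resolves via Lemma~\ref{ch4:l:loc_smoothing}: after approximating $u_n^j$ by a compactly supported bump $v_n^j$ adapted to the frame, one splits $r_n^J$ into low and high frequencies relative to the frame scale $N_n^j$. The low-frequency part is handled by Bernstein plus the $L^\infty L^6$ smallness, while the high-frequency part requires a \emph{scale-adapted local smoothing estimate} for $\nabla e^{it\Delta_g} P_{>BN_n^j}$ on the spacetime box supporting $v_n^j$. Since the metric is not scale-invariant, the constant in such an estimate is not a priori uniform in the localization scale, and proving Lemma~\ref{ch4:l:loc_smoothing} is a substantive step (it uses a Morawetz multiplier at scale $N^{-1}$ together with the Rodnianski--Tao global local smoothing estimate to control curvature error terms). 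Without this input your error estimate in Claim~(3) does not close, so the proposal as written is incomplete.

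A secondary, more minor point: when you ``rerun the argument with $J=1$'' to exclude $|t_n^1|\lambda_n^{-2}\to\pm\infty$, you should appeal (as the paper does) directly to the small-data theory via the vanishing of the Strichartz norm $\|(-\Delta_g)^{1/2} e^{it\Delta_g} u_n(0)\|_{L^{10}L^{30/13}((-\infty,0))}$, not just heuristic ``essentially linear'' behavior; but this is easily repaired and is not the essential defect.
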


The method of proof yields an analogous statement for global existence:
\begin{prop}
	\label{ch4:p:palais-smale_unit_time}
	Suppose $E_c' < \infty$, and fix any $\delta > 0$. Let $u_n$ be a 
	sequence of solutions 
	to~\eqref{ch4:e:main_eq} with $E(u_n) \to 
	E_c$ such that for some sequence of times $t_n$, $\| u_n\|_{Z( (t_n - 
		\delta, 
		t_n))} \to \infty$ and $\| u_n \|_{ Z( (t_n, t_n + \delta) )} \to 
	\infty$. Then 
	some subsequence of $u(t_n)$ converges in $\dot{H}^1$.
\end{prop}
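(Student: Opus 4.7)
The plan is to use the linear profile decomposition to show that the sequence $u_n(t_n)$ is, modulo an $\dot H^1$-null remainder, a single nonlinear profile living on a non-degenerate frame; from this, strong $\dot H^1$-compactness along a subsequence will be automatic. After translating in time I may assume $t_n \equiv 0$. Applying Proposition~\ref{ch4:p:lpd} to the bounded sequence $u_n(0)$ produces
\[
u_n(0) = \sum_{j=1}^J e^{-it_n^j \Delta_g} G_n^j \phi^j + r_n^J,
\]
with the orthogonality~\eqref{ch4:e:lpd_params}, energy decoupling~\eqref{ch4:e:lpd_energy_decoupling}, and the vanishing $\|e^{it\Delta_g} r_n^J\|_{L^\infty L^6} \to 0$ as $J \to J^*$.

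To each $\phi^j$ I would attach a nonlinear profile $v_n^j$ solving~\eqref{ch4:e:main_eq}. For compact frames (meaning $\lambda_n^j \to \lambda_\infty^j \in (0,\infty)$, $x_n^j \to x_\infty^j \in \mf R^3$, and $t_n^j \equiv 0$ or $(\lambda_n^j)^{-2}t_n^j \to \pm\infty$) I take the unique $v_n^j$ whose data matches (respectively, whose free evolution scatters to) $e^{-it_n^j\Delta_g} G_n^j \phi^j$. For all degenerate frames Proposition~\ref{ch4:p:emb_profiles} supplies $v_n^j$ with global Strichartz bounds $\|v_n^j\|_{Z(\mf R)} \le C(E(\phi^j))$, together with a $C^\infty_0$ approximation in $Y$.

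Now suppose, toward a contradiction, that the decomposition contains at least two profiles (so $E(\phi^j) \le E_c' - \eta$ for some $\eta > 0$ by~\eqref{ch4:e:lpd_energy_decoupling}, uniformly in $j$). For compact-frame profiles, the minimality definition of $E_c'$ gives $\|v_n^j\|_{Z(I)} \le \Lambda'(E_c' - \eta) < \infty$ on any interval $I$ with $|I| \le 1$, while embedded profiles already have global bounds. I would then form the approximate solution
\[
\tilde u_n^J(t) = \sum_{j=1}^J v_n^j(t) + e^{it\Delta_g} r_n^J
\]
on $I = (-\delta,\delta)$ (shrinking $\delta$ to be at most $1$ if necessary). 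The orthogonality~\eqref{ch4:e:lpd_params} of frames decouples the $Z$-norms of distinct $v_n^j$; and the smallness of $\|e^{it\Delta_g} r_n^J\|_{L^\infty L^6}$, interpolated against its bounded $Y$-norm, makes the nonlinear error
\[
(i\partial_t + \Delta_g)\tilde u_n^J - F(\tilde u_n^J) = \sum_j F(v_n^j) - F\Bigl(\sum_j v_n^j + e^{it\Delta_g} r_n^J\Bigr)
\]
small in $N(I)$. Proposition~\ref{ch4:p:stability} then yields $\|u_n\|_{Z(I)} \lesssim 1$, contradicting the blowup hypothesis on $(-\delta,0)$ and $(0,\delta)$. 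Hence exactly one profile survives, $\phi = \phi^1$, with $E(\phi) = E_c'$.

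The same stability argument applied to the single profile rules out all degenerate frames and the case $t_n^1/(\lambda_n^1)^2 \to \pm\infty$: in the former, Proposition~\ref{ch4:p:emb_profiles} gives global control of $v_n$; in the latter, $v_n$ scatters through $t = 0$, so $\|v_n\|_{Z((-\delta,\delta))} \to 0$. Either conclusion clashes with the blowup assumption. Therefore the surviving frame is compact with $t_n^1 \equiv 0$. Energy decoupling now forces $E(r_n^1) \to 0$, hence $\|r_n^1\|_{\dot H^1} \to 0$, and because $\lambda_n \to \lambda_\infty \in (0,\infty)$ and $x_n \to x_\infty \in \mf R^3$ the rescalings $G_n \phi$ converge strongly in $\dot H^1$ (using equivalence of $\dot H^1(g)$ and $\dot H^1(\delta)$ and continuity of dilation/translation on the Euclidean Sobolev space), so $u_n(0) = G_n\phi + r_n^1$ converges along the subsequence.

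The main obstacle I anticipate is the verification that $\tilde u_n^J$ is indeed a good approximate solution: one must decouple the $Z$-norms of $\sum_j v_n^j$ (and of the pointwise products $|v_n^{j_1}|\cdots|v_n^{j_5}|$ arising in $F$) using the orthogonality~\eqref{ch4:e:lpd_params}, and this requires handling compact and embedded profiles on different footings---the compact profiles via the density-of-$C^\infty_0$ approximation in $Z$, and the embedded profiles via the $C^\infty_0$ approximation supplied directly by Proposition~\ref{ch4:p:emb_profiles}. Keeping the embedded-profile error terms small uniformly in $J$ as one sends $J \to J^*$ also requires some care but is standard once the decoupling has been set up.
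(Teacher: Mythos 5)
Your proposal follows the paper's structure closely: translate to $t_n \equiv 0$, apply the linear profile decomposition, attach nonlinear profiles, run a stability argument to isolate one critical profile, then analyze its frame. The paper's own route to this proposition is to prove the global-in-time version (Proposition~\ref{ch4:p:palais-smale}) in detail and assert that the local version follows by a nearly identical argument, so at this level you are aligned. However, there are two gaps, one of which is substantive.

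\textbf{The remainder interaction.} You write that smallness of $\|e^{it\Delta_g} r_n^J\|_{L^\infty L^6}$, \emph{interpolated against its bounded $Y$-norm}, makes the nonlinear error small in $N(I)$. That interpolation does give smallness of $\|e^{it\Delta_g}r_n^J\|_{L^{10}L^{10}}$, which handles all terms with an \emph{undifferentiated} remainder factor. It does \emph{not} handle the term where the Leibniz rule places the gradient on the remainder, namely $\|(v_n^j)^4 \nabla e^{it\Delta_g} r_n^J\|_{L^2 L^{6/5}}$: the $Y$-norm of $\nabla e^{it\Delta_g}r_n^J$ is only bounded, not small, and there is no further room to interpolate. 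If $v_n^j$ concentrates at scale $\lambda_n \to 0$, a direct H\"older estimate gives $O(1)$, not $o(1)$. This is exactly the issue the paper flags in the introduction to Section~\ref{ch4:s:nlpd} and resolves with the frequency-split and scale-invariant local smoothing estimate of Lemma~\ref{ch4:l:loc_smoothing}: one splits $\nabla e^{it\Delta_g}r_n^J = \nabla e^{it\Delta_g}P_{\le BN_n}r_n^J + \nabla e^{it\Delta_g}P_{>BN_n}r_n^J$ at the concentration frequency $N_n = \lambda_n^{-1}$, controls the low-frequency piece by Bernstein and the $L^\infty L^6$ smallness, and controls the high-frequency piece by the $B^{-1/2}$ gain from the local smoothing estimate on the small spacetime cylinder where $v_n^j$ lives. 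Without this mechanism the error estimate does not close, and the paper treats this as a genuine obstruction requiring a new lemma (with a proof adapted to the curved metric). Your plan needs to incorporate this step explicitly.

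\textbf{The case analysis after stability.} Ruling out ``$\ge 2$ profiles'' does not by itself give ``exactly one profile with $E(\phi^1) = E_c'$.'' The stability argument must also rule out the cases of zero profiles and of a single (compact, $t_n^1 \equiv 0$) profile with $\limsup_n E(\phi^1_n) < E_c'$; in the latter the remainder carries energy $E_c' - E(\phi^1) > 0$, so your later claim ``energy decoupling now forces $E(r_n^1)\to 0$'' is not yet available. The clean formulation (which the paper adopts as Lemma~\ref{ch4:l:palais-smale_crit_profile}) is: assume for contradiction that \emph{no} profile satisfies $\limsup_n E(\phi^j_n) = E_c'$; then all profiles have energy $\le E_c' - \delta$ for some $\delta>0$, the approximate solution is well-controlled, and stability bounds $\|u_n\|_{Z(I)}$. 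This one contradiction covers all the subcases at once, and only afterward does one analyze the frame of the surviving critical profile. This is a minor reorganization but needed for the logic to be airtight.

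A smaller remark: for the local-in-time statement, compact frames with $(\lambda_n^1)^{-2}t_n^1 \to \pm\infty$ (hence $t_n^1 \to \pm\infty$) are dispatched by the small data theory, since the free evolution of $e^{-it_n^1\Delta_g}G_n^1\phi^1$ has vanishing $Y$-norm on $(-\delta,\delta)$; one does not need a scattering theory below $E_c'$ (which is not available when $E_c' < E_c$). Your phrasing ``whose free evolution scatters to'' slightly overstates what is used.
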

We prove the global-in-time proposition; as the reader may verify, a nearly 
identical argument yields the  
local-in-time version.

\begin{proof}[Proof of Prop. \ref{ch4:p:palais-smale}]
	By translating in time, we may assume without loss that $t_n \equiv 0$. 
	After passing to a subsequence, we obtain a decomposition
	\begin{align}
	\label{ch4:e:palais-smale_lpd}
	u_n(0) = \sum_{j=1}^J e^{-it_n^j\Delta_g} G_n^j \phi^j + r_n^J
	\end{align}
	into asymptotically independent profiles with the properties described in
	Proposition~\ref{ch4:p:lpd}.  In particular, 
	\begin{gather}
	\lim_{n \to \infty} \Bigl[ E(u_n(0)) - \sum_{j=1}^J E(e^{-it_n^j \Delta_g} 
	G_n^j \phi_n^j) - E(r_n^J) \Bigr] = 0, 
	\label{ch4:e:palais-smale_energy_decoupling}\\
	\lim_{J \to J^*} \limsup_{n \to \infty} \| e^{it\Delta_g} r_n^J\|_{L^\infty 
		L^6} = 0. \label{ch4:e:palais-smale_vanishing_remainder}
	\end{gather}
	
	\begin{lma} 
		\label{ch4:l:palais-smale_crit_profile}
		There exists $j$ such that $\limsup_{n \to \infty} 
		E(e^{-it_n^j\Delta_g} G_n^j \phi^j) = E_c$. 
	\end{lma}
	
	This will be proved below using a nonlinear profile decomposition. For the 
	moment, we assume the result and observe how it yields the proposition. By 
	the 
	lemma, $u_n(0)$ takes the form
	\begin{align*}
	u_n(0) = e^{-it_n\Delta_g} G_n \phi + r_n
	\end{align*}	
	where $\|r_n\|_{\dot{H}^1} \to 0$ and $G_n$ is associated to some frame 
	$(\lambda_n, x_n)$. After passing to a subsequence, we may assume that 
	$\lambda_n \in \lambda_\infty \in [0, \infty]$, $x_n \to x_\infty \in 
	\mf{R}^3 
	\cup \{\infty\}$, and $\lambda_n^{-2} t_n \to t_\infty \in \mf{R} \cup \{ 
	\pm 
	\infty\}$. 
	
	We claim that $\lambda_\infty \in (0, \infty)$, $x_\infty 
	\in \mf{R}^3$, and $t_\infty = 0$, which would clearly imply that $u_n(0)$ 
	converges in $\dot{H}^1$. If either of the first two statements failed, 
	Proposition~\ref{ch4:p:emb_profiles} would imply that  $\limsup_{n \to 
	\infty} 
	\| 
	u_n\|_{Z(\mf{R})} < \infty$, contrary to the assumptions on $u_n$. Thus 
	$\lambda_\infty \in (0, \infty)$ and $x_\infty \in \mf{R}^3$. If $t_n \to 
	\infty$, then, writing $G_\infty$ for the operator associated to the 
	parameters 
	$(\lambda_\infty, x_\infty)$, we have by the Strichartz estimate
	\begin{align*}
	\|(-\Delta_g)^{1/2} e^{it\Delta_g} u_n(0) \|_{L^{10} L^{\frac{30}{13}} ( 
		(-\infty, 0) \times \mf{R}^3)} \le \| (-\Delta_g)^{1/2} e^{it\Delta} 
		G_\infty 
	\phi \|_{ L^{10} L^{\frac{30}{13}}( (-\infty, -t_n) \times \mf{R}^3)} + 
	o(1) 
	\to 0,
	\end{align*}
	which implies by the small data theory that $\lim_{n \to \infty} \| 
	u_n\|_{Z(-\infty, 0)} = \infty$, contrary to the hypothesis that $u_n$ 
	blows up 
	forwards and backwards in time.
\end{proof}

\begin{cor}
	\label{ch4:c:soliton-like}
	If $E_c < \infty$, then there exists a global solution $u_c$ 
	to~\eqref{ch4:e:main_eq} with $E(u_c) = E_c$ and $\| u\|_{Z( (-\infty, 0])} 
	= 
	\| u\|_{Z( [0, \infty) )} = \infty$. Moreover, $u$ is 
	\emph{almost-periodic} in the sense that $\{u_c(t) : t \in \mf{R}\}$ is 
	precompact in $\dot{H}^1$.
\end{cor}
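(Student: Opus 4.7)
The plan is to extract a minimizing sequence of near-counterexamples, apply Proposition~\ref{ch4:p:palais-smale} to produce a limiting solution $u_c$, and obtain almost-periodicity by re-running the same Palais--Smale argument on time-translates of $u_c$. By the definition of $E_c$, for each $E > E_c$ we have $\Lambda(E) = \infty$, so taking $E_n \searrow E_c$ and solutions $u_n$ realizing $\|u_n\|_{Z(\mf{R})} \ge n$ with $E(u_n) \le E_n$, the finiteness of $\Lambda$ strictly below $E_c$ forces $E(u_n) \to E_c$ along a subsequence. Since $Z = L^{10}_{t,x}$ is additive in the tenth power over disjoint time intervals, continuity and monotonicity of $t \mapsto \|u_n\|_{Z((-\infty, t])}$ let me choose $t_n$ with
\[
\|u_n\|_{Z((-\infty, t_n])} = \|u_n\|_{Z([t_n, \infty))} = 2^{-1/10}\|u_n\|_{Z(\mf{R})} \to \infty.
\]
Proposition~\ref{ch4:p:palais-smale} then yields, along a subsequence, $u_n(t_n) \to u_0$ in $\dot{H}^1$; I let $u_c$ be the maximal-interval solution with $u_c(0) = u_0$, so that continuity of the energy on $\dot{H}^1$ gives $E(u_c) = E_c$.

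The main obstacle is to show that $u_c$ is global and that $\|u_c\|_{Z((-\infty, 0])} = \|u_c\|_{Z([0, \infty))} = \infty$, since Proposition~\ref{ch4:p:palais-smale} only supplies convergence at one time. For the blow-up, I would argue by contradiction via the stability Proposition~\ref{ch4:p:stability}: were $\|u_c\|_{Z((-\infty, 0])}$ finite, stability would transfer this bound to the nearby data $u_n(t_n)$ for large $n$, yielding uniform control of $\|u_n\|_{Z((-\infty, t_n])}$ after time-translation and contradicting the divergence just established. The positive half-line is handled symmetrically. Finite-time $\dot{H}^1$ breakdown of $u_c$ is then excluded by the same stability comparison against the globally existing $u_n$.

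For almost-periodicity, fix any sequence $\tau_n \in \mf{R}$ and set $v_n(t) := u_c(t + \tau_n)$; each $v_n$ is a solution with $E(v_n) = E_c$. Globality of $u_c$ together with Strichartz gives $\|u_c\|_{Z(I)} < \infty$ on every compact $I$; combining this with tenth-power additivity and the blow-up on each half-line through $0$, I deduce $\|u_c\|_{Z((-\infty, t])} = \|u_c\|_{Z([t, \infty))} = \infty$ for every $t \in \mf{R}$. Thus $v_n$ satisfies the hypotheses of Proposition~\ref{ch4:p:palais-smale} with splitting time $0$, so a subsequence of $v_n(0) = u_c(\tau_n)$ converges in $\dot{H}^1$. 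Since $\{\tau_n\}$ was arbitrary, $\{u_c(t) : t \in \mf{R}\}$ is precompact.
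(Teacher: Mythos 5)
Your overall strategy matches the paper's, and the construction of $u_c$, the choice of splitting times $t_n$, the blow-up argument via stability, and the structure of the precompactness step are all correct. However, there is a genuine gap in the globality step, and it propagates into the logical order of the rest of the proof.

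You assert that finite-time $\dot H^1$ breakdown of $u_c$ is ``excluded by the same stability comparison against the globally existing $u_n$.'' This does not work, for two reasons. First, the $u_n$ in the minimizing sequence are not known to be global: they are arbitrary solutions with $E(u_n) \to E_c$ and $\|u_n\|_Z \to \infty$, and a priori they may blow up in finite time. Second, and more decisively, the stability Proposition~\ref{ch4:p:stability} requires a reference solution with finite $Z$-norm on the comparison interval; you have just established $\|u_c\|_{Z([0, T_+))} = \infty$, so $u_c$ cannot serve as that reference past time $0$, and the $u_n$ provide no uniform bound there either. Convergence of $u_n(t_n)$ to $u_c(0)$ at a single time simply cannot, by itself, prevent $u_c$ from blowing up at some finite $T_+ > 0$.

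The correct route reverses your last two steps. After showing $\|u_c\|_{Z((T_-, 0])} = \|u_c\|_{Z([0, T_+))} = \infty$ on the maximal interval $(T_-, T_+)$, first prove precompactness of $\{u_c(\tau): \tau \in (T_-, T_+)\}$. This step does not need globality: the local theory already gives $\|u_c\|_{Z(K)} < \infty$ on every compact $K \subset (T_-, T_+)$, so tenth-power additivity yields $\|u_c\|_{Z((T_-, \tau])} = \|u_c\|_{Z([\tau, T_+))} = \infty$ for each $\tau$ in the maximal interval, and Proposition~\ref{ch4:p:palais-smale} applies to the translates $u_c(\cdot+\tau_n)$. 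Only then does globality follow: if $T_+ < \infty$, take $\tau_n \to T_+$; precompactness gives $u_c(\tau_n) \to \psi$ along a subsequence, the local theory produces a solution with data $\psi$ on some fixed $[0, 2\delta]$, and stability then extends $u_c$ past $T_+$, a contradiction. (This is precisely the continuation argument carried out explicitly in the proof of Corollary~\ref{ch4:c:gwp}.) As written, your proof derives precompactness from globality while the missing globality argument actually needs precompactness, so the reorganization is not cosmetic.
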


\begin{proof}
	Let $u_n$ be a sequence of solutions with $E(u_n) \to E_c$ and $\|u_n\|_{Z} 
	\to \infty$. Choose $t_n$ such that $\|u_n\|_{(-\infty, t_n]} = \| 
	u_n\|_{[t_n, \infty)}$. By the previous proposition, there exists $\phi \in 
	\dot{H}^1$ such that after passing to a subsequence, $u_n(t_n) \to \phi$ in 
	$\dot{H}^1$. Let $u_c$ be the maximal solution with $u_c(0) = 0$. 
	Proposition~\ref{ch4:p:palais-smale} and the 
	stability theory imply that $u_c$ is global and blows up forwards and 
	backwards in time. Another application of the previous proposition yields 
	precompactness of the orbit $\{u(t) : t \in \mf{R}\}$ in $\dot{H}^1$.
\end{proof}

An immediate consequence of Proposition~\ref{ch4:p:palais-smale_unit_time} and 
the 
stability theory is that 
the equation~\eqref{ch4:e:main_eq} is globally wellposed.
\begin{cor}
	\label{ch4:c:gwp}
	Under the hypotheses of Theorem~\ref{ch4:t:main_thm}, solutions 
	of~\eqref{ch4:e:main_eq} are global in time.
\end{cor}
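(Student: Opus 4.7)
The plan is to derive a contradiction from the assumption that some solution blows up in finite time, using the local-in-time Palais-Smale proposition in place of its global-in-time counterpart.

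Suppose, for contradiction, that there is a solution $u$ whose maximal lifespan $I$ is bounded, say $\sup I = T^* < \infty$. Then by the small-data local wellposedness (Proposition~\ref{ch4:p:lwp}) applied at times approaching $T^*$, one has $\|u\|_{Z(J)} = \infty$ for appropriate unit-length intervals $J$ clustering at $T^*$; in particular $E_c' \le E(u) < \infty$. By Proposition~\ref{ch4:p:palais-smale_unit_time} applied to a sequence realizing the infimum in the definition of $E_c'$, we extract an initial datum $\phi_c$ with $E(\phi_c) = E_c'$ whose maximal solution $u_c$ blows up in $Z$-norm on arbitrarily small intervals on either side of $t=0$.

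Next I would iterate the Palais-Smale lemma at each time in the lifespan of $u_c$ to promote $u_c$ into an almost-periodic object. Specifically, for any sequence of times $s_n$ in the lifespan $I_c$, the time translates $u_c(\cdot + s_n)$ form a sequence of solutions of energy $E_c'$ which, at time $0$, blow up forward and backward in arbitrarily short time; applying Proposition~\ref{ch4:p:palais-smale_unit_time} along a diagonal subsequence yields convergence of $u_c(s_n)$ in $\dot{H}^1$. Consequently the orbit $K = \overline{\{u_c(t) : t \in I_c\}}$ is precompact in $\dot{H}^1$.

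To conclude, I would argue that this precompactness contradicts finite-time blowup. The map $\phi \mapsto \|(-\Delta_g)^{1/2} e^{it\Delta_g}\phi\|_{L^{10}L^{30/13}(\mathbb{R}\times \mathbb{R}^3)}$ is continuous on $\dot{H}^1$ by Strichartz, so on the compact set $K$ one can select a uniform $\tau = \tau(K) > 0$ such that
\begin{align*}
\sup_{\phi \in K} \| (-\Delta_g)^{1/2} e^{it\Delta_g} \phi \|_{L^{10}L^{\frac{30}{13}}([0,\tau])} \le \varepsilon_0,
\end{align*}
where $\varepsilon_0$ is the threshold of Proposition~\ref{ch4:p:lwp}. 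Picking $t_0 \in I_c$ with $T_c^* - t_0 < \tau$ (where $T_c^* = \sup I_c$) and applying the local theory to $u_c(t_0) \in K$ yields a solution extending $u_c$ beyond $T_c^*$, contradicting the maximality of $I_c$. The backward case and the lower endpoint are handled identically.

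The main obstacle is the iteration that upgrades the single-datum extraction of Proposition~\ref{ch4:p:palais-smale_unit_time} into full precompactness of the orbit $K$; this step relies on the fact that every time translate of $u_c$ inherits the blowup property on both sides and has the same critical energy $E_c'$, so Proposition~\ref{ch4:p:palais-smale_unit_time} applies uniformly, together with the stability theorem (Proposition~\ref{ch4:p:stability}) to rule out loss of energy in the limit. Once this is in place, the contradiction from uniform local existence is essentially automatic.
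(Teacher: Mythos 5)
Your proposal follows essentially the same route as the paper: assume $E_c' < \infty$, use Proposition~\ref{ch4:p:palais-smale_unit_time} to extract a minimal-energy solution $u_c$ with lifespan contained in a bounded interval, iterate the Palais-Smale lemma over time translates to get precompactness of the orbit $K$, and then contradict maximality of the lifespan. The only noticeable difference is in the final step: the paper takes $t_n \to T_+$, extracts a limit $\phi_+ \in K$, runs the local theory from $\phi_+$, and invokes the stability theorem (Proposition~\ref{ch4:p:stability}) to push $u_c$ past $T_+$; you instead extract a uniform local existence time $\tau(K) > 0$ over the compact set $K$ and apply the small-data criterion of Proposition~\ref{ch4:p:lwp} directly at $u_c(t_0)$ for $t_0$ within $\tau$ of $T_+$. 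Both are standard and equivalent. Two small imprecisions in your write-up: (i) $E_c'$ is defined as a supremum, so you want a sequence approaching it from above, not ``realizing the infimum''; (ii) $u_c$ blows up in $Z$-norm on \emph{unit-scale} (or any fixed $\delta$) intervals to either side of any interior time, not on ``arbitrarily small'' intervals---the precise hypothesis needed for Proposition~\ref{ch4:p:palais-smale_unit_time} is that the $Z$-norm diverges on $(s_n - \delta, s_n)$ and $(s_n, s_n + \delta)$ for a fixed $\delta$, which holds here because both lifespan endpoints lie within distance $1$ of any interior time.
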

\begin{proof}
	If $E_c'< \infty$, then there exists a sequence of solutions $u_n$ with 
	\[
	E(u_n) \to E_c', \quad \|u_n\|_{Z( ( -\fr{1}{2}, 0))}, \ \|u_n\|_{Z ( (0, 
		\frac{1}{2}) )} \to \infty.
	\]
	 By 
	Proposition~\ref{ch4:p:palais-smale_unit_time}, 
	some subsequence of $u_n(0)$ converges to some $\phi \in \dot{H}^1$. Let 
	$u_c : (T_-, T_+) \times \mf{R}^3 \to \mf{C}$ be the maximal-lifespan 
	solution with $u_c(0) = \phi$. By the Proposition~\ref{ch4:p:stability}, 
	$u_c$ 
	has infinite $Z$-norm on $(-\tfrac{1}{2}, 0)$ and $(0, \tfrac{1}{2})$, so 
	the interval of definition for $u_c$ is contained in $(-\tfrac{1}{2}, 
	\tfrac{1}{2})$. As in the previous corollary, the solution curve $u_c(t)$ 
	is precompact in $\dot{H}^1$, so along some sequence of times $t_n \to T_+$ 
	the functions $u_c(t_n)$ converge to some $\phi_+$ in $\dot{H}^1$. But then 
	we may use a local solution $u_+$ with $u_+(0) = \phi_+$ and the stability 
	theory to continue $u_c$ to a larger time interval $(-T, T_+ + \delta)$, 
	contradicting its maximality.
\end{proof}

We prove Proposition~\ref{ch4:p:palais-smale} in the remainder of this section. 
While the argument is fairly standard, involving a nonlinear 
profile 
decomposition, some remarks are warranted concerning the 
interaction  between nonlinear profiles and the linear evolution of the 
remainder in the 
decomposition. This is normally controlled using local smoothing, 
which prevent high-frequency linear solutions from lingering in a confined 
region. In Euclidean space, the local smoothing estimate takes the form
\begin{align*}
\| \nabla e^{it\Delta_{\mf{R}^3}}\phi \|_{L^2(\mf{R} \times \{ |x| \le R\})} 
\lesssim R^{1/2} \| 
\phi\|_{\dot{H}^{1/2}}.
\end{align*}
However, most existing local smoothing estimates on manifolds work at 
a fixed spatial scale, and since the metric is not scale-invariant, it is not 
obvious how the constants depend on the 
size of the physical localization. 


The following lemma
is analogous to Lemma~7.1 of Ionescu-Pausader concerning NLS on the torus 
\cite{IonescuPausader2012}, although the proof 
there is quite different due to trapping.

Let $\chi(\lambda)$ be a smooth function on the real line equal to $1$ when 
$\lambda \le 1$ and vanishing when $\lambda \ge 1.2$, and define the spectral 
multipliers $P_{\le N} = \chi(\sqrt{-\Delta_g}/N)$. By 
Theorem~\ref{ch4:t:multiplier}, these satisfy the 
Littlewood-Paley estimates of Proposition~\ref{ch4:p:bernstein} except when $p 
= 1$ or $p = \infty$ (which will not be needed).

\begin{lma}
	\label{ch4:l:loc_smoothing}
	For any $R, N, T> 0$, $B \ge 1$, and 
	$(t_0, x_0) \in \mf{R} \times \mf{R}^3$,
	\begin{align*}
	\| \nabla e^{it\Delta_g} P_{>BN} \phi \|_{L^2( |t-t_0| \le T N^{-2}, \ | 
		x-x_0| \le R N^{-1} 
		)} \le C  B^{-1/2} N^{-1} \| \phi \|_{\dot{H}^1}.
	\end{align*}
\end{lma}

\begin{proof}
	By invariance under time translation, we may take $t_0 = 0$. We adapt the 
	standard proof of local smoothing on 
	Euclidean space via a Morawetz multiplier but need to deal with error terms 
	arising from 
	the background curvature. These will be controlled by a separate local 
	smoothing 
	estimate 
	adapted to the metric.
	
	Let $a(x) = \langle x \rangle$. We compute (all derivatives are 
	partial 
	derivatives)
	\begin{alignat*}{4}
	&\partial a = \frac{ x }{ \langle x \rangle}, &\quad & \partial^2 a = 
	\langle x 
	\rangle^{-3} P_r + \langle x \rangle^{-1} P_\theta,\\	
	&\Delta a \ge \frac{3}{\langle x \rangle^{3} }, &\quad &\Delta^2 a= 
	-\frac{15}{\langle x 
		\rangle^7}\\
	&|\partial^k a | \le \frac{c_k}{\langle x \rangle^{k-1}},
	\end{alignat*}
	where $P_r$ and $P_\theta = I - P_r$ are the radial and tangential 
	projections, respectively.
	
	Now write $D = d + \Gamma$ for the Levi-Civita covariant derivative, 
	where 
	$\Gamma$ are the Christoffel symbols for the metric $g$; by our assumptions 
	on $g$, $\Gamma$ is supported in the unit ball.
	
	If $u$ is a solution to the equation
	\begin{align*}
	(i\partial_t + \Delta_g)u = \mu |u|^4 u, \ \mu \in \mf{R},
	\end{align*}
	define the Morawetz action
	\begin{align*}
	M(t) = \int_{\mf{R}^3} a(x) |u(t, x)|^2 \, dg.
	\end{align*}
	Then as in the Euclidean setting, we have 
	\begin{align*}
	\partial_t M(t) = 2 \opn{Im} \int \overline{u} D^\alpha a D_\alpha u \, dg,
	\end{align*}
	and the Morawetz identity
	\begin{align}
	\label{ch4:e:morawetz}
	\partial_t^2 M = 4 \opn{Re} \int (D^2_{\alpha \beta} a) D^\alpha 
	\overline{u} 
	D^\beta u \, dg -  \int (\Delta_g^2 a) |u|^2 \, dg + \frac{4\mu}{3} \int 
	(\Delta_g a) |u|^6 \, dg.
	\end{align}
	
	We apply this identity with $\mu = 0$ and $u= e^{it\Delta_g} P_{>BN} \phi$; 
	later on we will use this when $\mu = 1$.
	For $N > 0$ and $x_0 \in \mf{R}^3$, let $a_{N, x_0}(x) = a\bigl ( N 
	(x-x_0) \bigr )$. We compute
	\begin{align*}
	&D a_{N, x_0} = N (\partial a)\Bigl( N(x-x_0) \Bigr)\\
	&D^2_{\alpha \beta} a_{N, x_0}  = N^{2} (\partial_\alpha \partial_\beta a) 
	\Bigl( 
	N(x-x_0) 
	\Bigr) - N \Gamma_{\alpha \beta}^\mu  \partial_\mu a \Bigl( 
	N(x-x_0) \Bigr).
	\end{align*}
	Then
	\begin{align*}
	\Delta_g^2 a_{N, x_0} &= g^{\alpha \beta} (\partial_\alpha \partial_\beta - 
	\Gamma_{\alpha \beta}^\mu \partial_\mu) g^{\alpha'\beta'} ( 
	\partial_{\alpha'} \partial_{\beta'} - \Gamma_{\alpha'\beta'}^{\mu'} 
	\partial_{\mu'} ) 
	a_{N, x_0}\\
	&= N^{4} g^{\alpha \beta} g^{\alpha'\beta'} (\partial_\alpha 
	\partial_\beta \partial_{\alpha'} \partial_{\beta'}  a) \Bigl( 
	N(x-x_0) \Bigr) + (N^{3} P_3 a + N^{2} P_2 a + N^{1} P_1 a)\Bigl( 
	N(x-x_0) \Bigr),
	\end{align*}
	where $P_k$ is a differential operator of order $k$ with coefficients 
	supported in the unit ball; hence
	\begin{align*}
	\Bigl| P_k a \Bigl(N(x-x_0) \Bigr) \Bigr| \le c_k \opn{1}_{\{ |x| \le 
		1\}}(x) 
	\Bigl \langle N(x-x_0) \Bigr \rangle^{1 - k}.
	\end{align*}
	
	Inserting these bounds into the Morawetz identity and integrating in time 
	over the interval $|t| \le TN^{-2}$, we obtain
	\begin{align*}
	\iint_{|t| \le TN^{-2}} \langle N(x-x_0) \rangle^{-3} |\nabla u|^2 \, dx dt 
	&\lesssim N^{-1} 
	\iint 
	\mr{1}_{ \{ |t| 
		\le 
		TN^{-2}, |x| \le 1 \} } (t, x) (|\nabla u|^2  + |u|^2 )\, dx dt \\
	&+ N^{2} \iint_{|t| \le TN^{-2} } |u|^2 \, dx dt + N^{-1} \| u\|_{L^\infty 
		L^2}  \| u\|_{ L^\infty \dot{H}^1}.
	\end{align*}
	By the unitarity of the propagator and the spectral localization of $u$, we 
	have 
	\[
	\|u\|_{L^\infty L^2} 
	\|u\|_{L^\infty \dot{H}^1} \lesssim (BN)^{-1} \| \phi\|_{\dot{H}^1}^2.
	\] 
	Also, 
	by H\"{o}lder in time, the second term on the right may be bounded by
	\begin{align*}
	T \| u\|_{L^\infty L^2}^2 &\lesssim T (BN)^{-2} \| \phi\|_{\dot{H}^1}^2.
	\end{align*}
	Finally, the first term on the right is controlled by the following 
	scale-$1$ 
	local 
	smoothing 
	estimate of Rodnianski and Tao~\cite{RodnianskiTao2007}
	\begin{align*}
	\| \langle x \rangle^{-\frac{1}{2} - \sigma} \nabla e^{it\Delta_g} \phi\|_{ 
		L^2(\mf{R} \times \mf{R}^3)}  + \| \langle x \rangle^{-\frac{3}{2} - 
		\sigma} u 
	\|_{L^2 (\mf{R} \times \mf{R}^3)} &\lesssim_\sigma \| 
	\phi\|_{\dot{H}^{1/2}}, \ 
	\sigma > 0,
	\end{align*}
	who strengthened an earlier local-in-time version by Doi~\cite{Doi1996}. 
	Summing up, we obtain
	\begin{align*}
	\| \nabla u\|_{L^2(  \{ |t| \le TN^{-2}, |x-x_0| \le RN^{-1} \} ) }^2 
	&\lesssim 
	(B^{-1} 
	N^{-2} + T (BN)^{-2} ) \| \phi\|_{\dot{H}^1}^2.
	\end{align*}	
\end{proof}

\begin{proof}[Proof of Lemma~\ref{ch4:l:palais-smale_crit_profile}]
	Assuming that the claim fails, the asymptotic additivity of energy implies 
	the existence of some $\delta > 0$ such that $\limsup_{n \to \infty} E( 
	e^{-it_n^j \Delta_g} G_n^j \phi^j) \le E_c - \delta$ for all $j$. We shall 
	deduce that
	\begin{align}
	\label{ch4:e:palais-smale_contradiction}
	\limsup_{n \to \infty} \| u_n \|_{Z(\mf{R})} \le C(E_c, \delta) < \infty,
	\end{align}
	which contradicts the hypotheses on $u_n$.
	
	For each $j \le J$, let $u_n^j$ be the maximal-lifespan nonlinear solution 
	with $u_n^j(0) = e^{-it_n^j \Delta_g} G_n^j \phi^j$; by the 
	definition of $E_c$, for all $n$ sufficiently large we have 
	$\|u_n^j\|_{Z(\mf{R})} \le C$, hence $\|u_n^j\|_{Y (\mf{R})} \le C'$. Define
	\begin{align*}
	\tilde{u}_n^J = \sum_{j=1}^J u_n^j + e^{it\Delta_g} r_n^J.
	\end{align*}
	The bound~\eqref{ch4:e:palais-smale_contradiction} will be a
	consequence of Proposition~\ref{ch4:p:stability} and the following three 
	assertions:
	\begin{enumerate}
		\item \label{ch4:enum:bdd} $\limsup_{J \to J^*} \limsup_{n \to \infty} 
		\| 
		\tilde{u}_n^J 
		\|_{Y(\mf{R})} \le C(E_c, \delta) < \infty$.
		\item \label{ch4:enum:matching_initial_data} $\lim_{J \to J^*} 
		\limsup_{n 
			\to \infty} \| u_n(0) - 
		\tilde{u}_n^J(0) \|_{\dot{H}^1} = 0$.
		\item \label{ch4:enum:approx_soln} $\lim_{J \to J^*} \limsup_{n \to 
			\infty} 
		\| \nabla [ 
		(i\partial_t + \Delta_g) \tilde{u}_n^J - F(\tilde{u}_n^J) ] 
		\|_{N(\mf{R})} = 0$, where $F(z) = |z|^4 z$.
	\end{enumerate}
	
	\textbf{Proof of claim \eqref{ch4:enum:bdd}}. As the Strichartz estimate 
	and 
	the hypothesis of bounded energy imply that the remainder $e^{it\Delta_g} 
	r_n^J$ is bounded in $Y$, it suffices 
	to show that
	\begin{align*}
	\limsup_{J \to J^*} \limsup_{n \to \infty} \, \Bigl \| \sum_{j=1}^J u_n^j 
	\Bigr \|_{Y(\mf{R})} < \infty.
	\end{align*}
	For each $J$, we have
	\begin{align}
	\label{ch4:e:palais-smale_bdd_eqn1}
	\Bigl \| \sum_{j=1}^J u_n^j \Bigr \|_Y^2 &= \Bigl \| \bigl ( \sum_{j=1}^J 
	\nabla 
	u_n^j \bigr )^2 \Bigr\|_{L^5 L^{\frac{15}{13}}} \le \sum_{j=1}^J \| \nabla 
	u_n^j \|_{L^{10} L^{\frac{30}{13}}}^{2} + c_J \sum_{j \ne k} \| (\nabla 
	u_n^j)(\nabla u_n^k) \|_{L^5 L^{ \frac{15}{13} }}.
	\end{align}
	By Lemma~\eqref{ch4:l:nonlinear_profile_decoupling}, the cross-terms vanish 
	as 
	$n \to \infty$. By the asymptotic additivity of energy, there is some $J_0$ 
	such that $\limsup_{n \to \infty} \| \nabla u_n^j(0) \|_{L^2}$ is 
	smaller than the small-data threshold in Proposition~\ref{ch4:p:lwp} 
	for all $j \ge J_0$. In view of the small-data 
	estimate~\eqref{ch4:e:lwp_spacetime}, for any $J > J_0$ we have
	\begin{align*}
	\limsup_{n \to \infty} \, \Bigl \| \sum_{j=1}^J u_n^j \Bigr \|_Y^2 &\le 
	C_{J_0}(E_c) + \limsup_{n \to \infty} \sum_{j=J_0}^J E(u_n^j) \le C_{J_0} 
	(E_c) + E_c.
	\end{align*}
	
	For future reference, we observe this also proves that for any 
	$\varepsilon > 0$, there exists 
	$J'(\varepsilon, E_c)$ with
	\begin{align}
	\label{ch4:e:palais-smale_vanishing_tails}
	\limsup_{n \to \infty} \, \Bigl \| \sum_{J'\le j \le J} u_n^j \Bigr \|_{Y} 
	< 
	\eta.
	\end{align}
	for all $J$.
	
	\begin{lma}
		\label{ch4:l:nonlinear_profile_decoupling}
		For all $j \ne k$,
		\begin{align*}
		\lim_{n \to \infty} \|u_n^j u_n^k\|_{L^5 L^5} + \| u_n^j \nabla 
		u_n^k\|_{L^5 L^{\frac{15}{8}}} + \| \nabla u_n^j \nabla u_n^k\|_{L^5 
			L^{\frac{15}{13}}} = 0.
		\end{align*}
	\end{lma}
	\begin{proof}[Proof of Lemma~\ref{ch4:l:nonlinear_profile_decoupling}]
		
		The argument is well-known, and we will just illustrate it by 
		estimating the 
		middle term. By Proposition~\ref{ch4:p:emb_profiles}, for each 
		$\varepsilon 
		> 0$ there 
		exist $\psi^j, \psi^k \in C^\infty_0( \mf{R} \times \mf{R}^3)$ such 
		that 
		\begin{align*}
		\| \nabla [ u_n^j(t) - G_n^j \psi^j ( (\lambda_n^j)^{-2} (t-t_n^j) ) ]
		\|_{L^{10} L^{\frac{30}{13}}} +  \| \nabla [ u_n^k(t) - G_n^k \psi^k ( 
		(\lambda_n^k)^{-2} 
		(t-t_n^k) )] \|_{L^{10} L^{\fr{30}{13}}} < \varepsilon
		\end{align*}
		for all $n$ sufficiently large. Letting $v^j_n$, $v^k_n$ denote the 
		compactly supported approximations, we have by H\"{o}lder
		\begin{align*}
		\| u_n^j (\nabla u_n^k)\|_{L^5 L^{\frac{15}{8}}} &\le \| u_n^j - v_n^j 
		\|_{L^{10} L^{10}} \| \nabla u_n^k\|_{L^{10} L^{\frac{30}{13}}} + \| 
		v_n^j \|_{L^{10} L^{10}} \| \nabla (u_n^k - v_n^k) \|_{L^{10} 
			L^{\frac{30}{13}}}\\
		&+ \|v_n^j \nabla v_n^k \|_{L^5 L^{\frac{15}{8}}}.
		\end{align*}
		The last term vanishes due to the pairwise orthogonality 
		of the frames $(\lambda_n^j, t_n^j, x_n^j)$ and $(\lambda_n^k, t_n^k, 
		x_n^k)$. Thus
		\begin{align*}
		\limsup_{n \to \infty} \| u_n^j \nabla u_n^k \|_{L^5 L^{\frac{15}{8}}} 
		&\le C(E_c, \delta) 
		\varepsilon
		\end{align*}
		for any $\varepsilon > 0$.  
	\end{proof}
	
	\textbf{Claim~\eqref{ch4:enum:matching_initial_data}} is immediate.
	
	\textbf{Proof of Claim~\eqref{ch4:enum:approx_soln}}. Write
	\begin{align*}
	(i\partial_t + \Delta_g) \tilde{u}_n^J - F(\tilde{u}_n^J) &= \sum_{j=1}^J 
	F(u_n^j) - F \Bigl( \sum_{j=1}^J u_n^j \Bigr) \\
	&+ F\Bigl( \sum_{j=1}^J u_n^j 
	\Bigr) - F \Bigl( \sum_{j=1}^J u_n^j + e^{it\Delta_g} r_n^J\Bigr),
	\end{align*}
	and expand
	\begin{align*}
	&F \Bigl ( \sum_{j=1}^J u_n^j \Bigr ) - \sum_{j=1}^J F(u_n^j) = 
	\Bigl | \sum_{j=1}^J u_n^j \Bigr|^4 \Bigl ( \sum_{j=1}^J 
	u_n^j \Bigr) - \sum_{j=1}^J 
	|u_n^j|^4 u_n^j\\
	&= \sum_{j=1}^J \Bigl( \Bigl| \sum_{j=1}^J u_n^j \Bigr|^4 - |u_n^j|^4 
	\Bigr) u_n^j\\
	&= \sum_{j=1}^J \sum_{k \ne j} \Bigl( u_n^j u_n^k \int_0^1 G_z\Bigl( 
	\sum_{\ell=1}^J u_n^\ell - \theta u_n^j \Bigr) \, d\theta + 
	u_n^j\overline{u_n^k} \int_0^1 G_{\overline{z}} \Bigl( \sum_{\ell=1}^J 
	u_n^\ell 
	- \theta u_n^j \Bigr) \, d\theta \Bigr), 
	\end{align*}
	where $G(z) = |z|^4$. By the Leibniz rule, H\"{o}lder, and 
	Lemma~\ref{ch4:l:nonlinear_profile_decoupling}, 
	\begin{align*}
	\| F \Bigl ( \sum_{j=1}^J u_n^j \Bigr ) - \sum_{j=1}^J F(u_n^j)\|_{L^2 
		L^{\frac{6}{5}}}  &\le \sum_{j=1}^J \sum_{j \ne k} \|\nabla (u_n^j 
	u_n^k)\|_{L^{5} L^{\frac{15}{8}}} \Bigl\| 
	\int_0^1 G' \Bigl( \sum_{\ell=1}^Ju_n^\ell - \theta u_n^j \Bigr) \, d\theta 
	\Bigr\|_{L^{\frac{10}{3}} L^{\frac{10}{3}}}\\
	&\le c_J \sum_{j=1}^J \sum_{j \ne k}  \| 
	u_n^j \nabla u_n^k\|_{L^5 L^{\frac{15}{8}}}\Bigl( \Bigl\| \sum_{\ell=1}^J 
	u_n^\ell 
	\Bigr \|_{L^{10} L^{10}}^3 + \| u_n^j\|_{L^{10}L^{10}}^3 \Bigr)\\
	&\to 0 \text{ as } n \to \infty.
	\end{align*}
	
	Similarly, write
	\begin{align*}
	F\Bigl( \sum_{j=1}^J u_n^j + e^{it\Delta_g} r_n^J \Bigr) - F \Bigl( 
	\sum_{j=1}^J u_n^j \Bigr) &= \Bigl(\Bigl| \sum_{j=1}^J u_n^j + 
	e^{it\Delta_g}  r_n^J
	\Bigr|^4 - \Bigl| \sum_{j=1}^J u_n^j \Bigr|^4\Bigr) \sum_{j=1}^J u_n^j  \\
	&+ 
	\Bigl| \sum_{j=1}^J u_n^j + e^{it\Delta_g} r_n^J \Bigr|^4 e^{it\Delta_g} 
	r_n^J\\
	&= (I) + (II).
	\end{align*}
	
	First consider $(I)$. As before, writing $G(z) = |z|^4$, we have by the 
	Leibniz rule, H\"{o}lder, and Sobolev embedding
	\begin{align*}
	\Bigl\| \nabla (I)\|_{L^2 L^{\frac{6}{5}}} &\le \Bigl\| \nabla 
	(e^{it\Delta_g}r_n^J ) \int_0^1 G'\Bigl( \sum_{j=1}^J u_n^j + \theta 
	e^{it\Delta_g} r_n^J \Bigr) \sum_j u_n^j  \Bigr\|_{L^2 L^{\frac{6}{5}}} \\
	&+ \Bigl \| (e^{it \Delta_g} r_n^J) \nabla \int_0^1 G'\Bigl (\sum_{j=1}^J 
	u_n^j + \theta e^{it\Delta_g} r_n^J \Bigr) \sum_{j=1}^J u_n^j \Bigr \|_{ 
		L^2 L^{ 
			\frac{6}{5}}}\\
	&\lesssim \| \nabla(e^{it\Delta_g} r_n^J) \Bigl| \sum_{j=1}^J u_n^j 
	\Bigr|^4 
	\Bigr \|_{L^2 L^{\frac{6}{5}}} + \| \nabla(e^{it\Delta_g} r_n^J) 
	\|_{L^{10} L^{\frac{30}{13}}} \| e^{it\Delta_g} r_n^J \|_{L^{10} L^{10}}^3 
	\Bigl \|\sum_{j=1}^J u_n^j \Bigr \|_{L^{10} L^{10}}\\
	&+ \|e^{it\Delta_g} r_n^J \|_{L^{10} L^{10}} ( \| \nabla u_n^J\|_{L^{10} 
		L^{\frac{30}{13}}}^4 + \| \nabla e^{it\Delta_g} r_n^J \|_{L^{10} 
		L^{\frac{30}{13}}}^4).
	\end{align*}
	By~\eqref{ch4:e:palais-smale_vanishing_remainder} and interpolation, 
	$\lim_{J 
		\to J^*} \limsup_{n \to \infty} \| e^{it\Delta_g}r_n^J \|_{L^{10} 
		L^{10}} = 
	0$; therefore, all but the first term are acceptable.
	
	To deal with the first term, we recall that for any $\varepsilon$, there 
	exists by~\eqref{ch4:e:palais-smale_vanishing_tails} a threshold 
	$J'(\varepsilon)$ such that for all $n$ large,
	\begin{align*}
	\Bigl \| \sum_{j=J'}^J u_n^j \Bigr \|_{L^{10} 
		L^{10}} < \varepsilon.
	\end{align*}
	With $\varepsilon$ fixed but arbitrary, this implies that
	\begin{align*}
	&\| \nabla(e^{it\Delta_g} r_n^J ) \Bigl| \sum_{j=1}^J u_n^j \Bigr|^4 \Bigr 
	\|_{L^2 L^{\frac{6}{5}}} \\
	&\le c_{J'} \sum_{j=1}^{J'} \| (u_n^j)^4 \nabla 
	e^{it\Delta_g} r_n^J \|_{L^2 L^{\frac{6}{5}}} + \varepsilon^4 E_c^{1/2}.
	\end{align*}
	It therefore remains to show that
	\begin{align}
	\label{ch4:e:palais-smale_lin-nonlin_interaction}
	\limsup_{J \to J^*} \limsup_{n \to \infty} \| (u_n^j)^4 \nabla 
	e^{it\Delta_g} r_n^J \|_{L^2 L^{\frac{6}{5}}} \lesssim \varepsilon \text{ 
		for each } j \le J'.
	\end{align}
	
	Select $\psi^j \in C^\infty_0(\mf{R} \times \mf{R}^3)$ 
	so that
	$\| u_n^j  - v_n^j \|_{Y} < \varepsilon$, where $v_n^j = G_n^j \psi^j ( 
	(\lambda_n^j)^{-2}(t-t_n^j))$. Then we may replace $u_n^j$ by $v_n^j$ in 
	the above sum since for all $n$ sufficiently large, since
	\begin{align*}
	\| (u_n^j)^4 \nabla e^{it\Delta_g} r_n^J \|_{L^2 L^{\frac{6}{5}}} & \le 
	c\|u_n^j - v_n^j \|_{L^{10} L^{10}} ( \|u_n^j \|_{L^{10} L^{10}}^3 + \| 
	v_n^j\|_{L^{10} L^{10}}^3 ) \| \nabla e^{it\Delta_g} r_n^J \|_{L^{10} 
		L^{\frac{30}{13}}}\\
	&\le C(E_c)\varepsilon.
	\end{align*}
	Let $\chi_n^j$ denote the characteristic function of 
	$\opn{supp}(v_n^j)$. Putting $N_n^j = (\lambda_n^j)^{-1}$, we estimate 
	using H\"{o}lder, Littlewood-Paley theory, and 
	Lemma~\ref{ch4:l:loc_smoothing}
	\begin{align*}
	\| (v_n^j)^4 \nabla e^{it\Delta_g} r_n^J \|_{L^2 L^{\frac{6}{5}}} 
	&\lesssim  
	(N_n^j)^2 \|\chi_n^j \nabla e^{it\Delta_g} P_{\le BN_n^j} r_n^J \|_{L^2 
		L^{\frac{6}{5}}} + (N_n^j)^2 \|\chi_n e^{it\Delta_g} P_{> BN_n^j} r_n^J 
	\|_{L^2 L^{\frac{6}{5}}}\\
	&\lesssim  (N_n^j)^{-1} \| \nabla e^{it\Delta_g} P_{\le BN_n^j} r_n^J 
	\|_{L^\infty L^6} + 
	N_n^j \| \chi_n e^{it\Delta_g} P_{> BN_n^j} r_n^J \|_{L^2 L^2}\\
	&\lesssim B \| e^{it\Delta_g} r_n^J\|_{L^\infty L^6} + B^{-1/2}.
	\end{align*}
	As the remainder vanishes in $L^\infty L^6$ and $B$ is arbitrary, it 
	follows 
	that
	\begin{align*}
	\lim_{J \to J^*} \limsup_{n \to \infty} \| (v_n^j)^4 \nabla 
	e^{it\Delta_g}r_n^J 
	\|_{L^2 L^{\frac{6}{5}}} = 0.
	\end{align*}
	Altogether, we obtain~\eqref{ch4:e:palais-smale_lin-nonlin_interaction}, 
	hence 
	$(I)$ is acceptable.
	
	The contribution of $(II)$ is estimated similarly. By the Leibniz rule,
	\begin{align*}
	\| \nabla (II) \|_{L^2 L^{\frac{6}{5}}} &\lesssim \| e^{it\Delta_g} r_n^J 
	(u_n^J)^3 \nabla u_n^J\|_{L^2 L^{\frac{6}{5}}} + \| (u_n^J)^4 \nabla 
	e^{it\Delta_g} r_n^J \|_{L^2 L^{\frac{6}{5}}}.
	\end{align*}
	The first term is acceptable due to the undifferentiated $e^{it\Delta_g} 
	r_n^J$, while the second term is handled is above. This completes the proof 
	of 
	Claim~\ref{ch4:enum:approx_soln}, and therefore finishes the proof of 
	Lemma~\ref{ch4:l:palais-smale_crit_profile} asserting the existence of a 
	critical 
	profile. Consequently, Proposition~\ref{ch4:p:palais-smale} is proved.
\end{proof}


\section{Scattering for small metric perturbations}
\label{ch4:s:scattering}

In this final section we prove scattering for metrics $g$ with 
$\|g-\delta\|_{C^3} \le 
\varepsilon$ for some $\varepsilon$ depending on the diameter of 
$\opn{supp}(g-\delta)$. 
If the curvature is sufficiently mild, we can adapt 
the one-particle Bourgain-Morawetz inequality 
\cite{bourgain_nls_radial_gwp} for the Euclidean nonlinear Schr\"{o}dinger 
equation to preclude the existence of almost-periodic solutions, which, when 
combined with Corollary~\ref{ch4:c:soliton-like}, yields scattering. We do not attempt to optimize $\varepsilon$ here as we believe the smallness assumption is artificial, but do not know how to prove that at this time.

\begin{prop}
	\label{ch4:p:bourgain_morawetz}
	There exists $\varepsilon > 0$ such that if $\|g-\delta\|_{C^3} \le 
	\varepsilon$, 
	then for any solution $u$ to the 
	nonlinear equation~\eqref{ch4:e:main_eq} and any time interval $I$, 
	\begin{align*}
	\int_I \int_{|x| \le  |I|^{1/2}} \frac{|u|^6}{\langle x \rangle} \, dx dt 
	&\le c |I|^{1/2} E(u).
	\end{align*}
\end{prop}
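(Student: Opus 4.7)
The plan is to adapt the classical Bourgain--Morawetz inequality to the curved background, running the Morawetz identity derived in the proof of Lemma~\ref{ch4:l:loc_smoothing} with $\mu = 1$ and truncating the weight at spatial scale $R = |I|^{1/2}$. Concretely, I would set
\[
a_R(x) = \chi(|x|/R)\langle x \rangle + \bigl(1 - \chi(|x|/R)\bigr)R
\]
for a cutoff $\chi \in C^\infty_c([0,2))$ with $\chi \equiv 1$ on $[0,1]$. Thus $a_R = \langle x \rangle$ on $\{|x| \le R\}$, $a_R = O(R)$ globally, and $|\nabla a_R| \lesssim 1$ with support in $\{|x| \le 2R\}$. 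With $M(t) := \int a_R |u|^2\,dg$, integrating~\eqref{ch4:e:morawetz} over $I$ yields
\[
\tfrac{4}{3}\int_I\!\int (\Delta_g a_R)|u|^6\,dg\,dt = [\partial_t M]_{\partial I} - 4\operatorname{Re}\!\int_I\!\int (D^2 a_R)(D\bar u,D u)\,dg\,dt + \int_I\!\int (\Delta_g^2 a_R)|u|^2\,dg\,dt.
\]

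On the bulk $\{|x| \le R\}$, a direct Euclidean computation yields $\Delta\langle x\rangle = (3+2|x|^2)/\langle x\rangle^3 \ge c/\langle x\rangle$, the Hessian $D^2\langle x\rangle$ is positive semidefinite, and $-\Delta^2\langle x\rangle = 15/\langle x\rangle^7 \ge 0$. Thus in the Euclidean case the Hessian and bilaplacian terms on the right hand side contribute with the favorable sign, while the nonlinear term is bounded below by $c\int_{|x|\le R}|u|^6/\langle x\rangle\,dx$, which is exactly the quantity we want to estimate. The boundary contribution is controlled by Cauchy--Schwarz:
\[
|\partial_t M(t)| \le 2\|u\|_{L^2(B_{2R})}\|\nabla u\|_{L^2} \lesssim R\,\|u\|_{L^6}\,\|\nabla u\|_{L^2} \lesssim R\,E(u),
\]
using H\"older's inequality $\|u\|_{L^2(B_{2R})} \lesssim R\|u\|_{L^6(B_{2R})}$ and Sobolev embedding.

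The main technical task is to control the error terms. In the transition annulus $\{R \le |x| \le 2R\}$ the Hessian $|D^2 a_R|\lesssim 1/R$ and bilaplacian $|\Delta^2 a_R|\lesssim 1/R^3$ may have indefinite sign, but each contributes at most $\lesssim (|I|/R)E(u) = R\,E(u)$ to the right hand side; the bilaplacian estimate uses the Hardy bound $\int |u|^2/|x|^2 \lesssim E(u)$ after writing $1/R^3 \lesssim 1/(R|x|^2)$ in that annulus. The curvature corrections $D_g^2 a_R - D^2 a_R$, $\Delta_g a_R - \Delta a_R$, and $\Delta_g^2 a_R - \Delta^2 a_R$ involve at most three derivatives of $g^{jk}$ acting on $a_R$ (or Christoffel symbols multiplying derivatives of $a_R$), so under the hypothesis $\|g-\delta\|_{C^3}\le\varepsilon$, each is $O(\varepsilon)$ relative to the corresponding Euclidean quantity. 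The crux of the proof is arranging this bookkeeping carefully enough that, for $\varepsilon$ small, the perturbed positivity $\Delta_g a_R \ge c/\langle x \rangle$ and the nonnegativity of $D_g^2 a_R$ and $-\Delta_g^2 a_R$ on the bulk all survive, and the transition-region and metric errors assemble into at most $C\,R\,E(u)$ in total, yielding the claimed bound.
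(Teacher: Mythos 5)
Your proposal is correct and follows essentially the same route as the paper: truncate $a = \langle x\rangle$ at scale $R = |I|^{1/2}$, apply the Morawetz identity~\eqref{ch4:e:morawetz} with $\mu = 1$, use the sign of $\partial^2 a$ and $\Delta^2 a$ on the bulk, absorb the $O(\varepsilon)$ curvature corrections (supported in the unit ball where $\partial^2 a \gtrsim 1$), and bound the boundary and transition-annulus terms by $R\,E(u)$. The only differences are cosmetic: you extend $a_R$ to the constant $R$ outside while the paper lets it vanish, and you control the annulus bilaplacian error via Hardy where the paper uses H\"older/Sobolev — both give $R^{-1}E(u)$.
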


\begin{proof}
	Let $a = \langle x \rangle$ as in the proof of 
	Lemma~\ref{ch4:l:loc_smoothing}, 
	and 
	write $a_R = a(x) \chi(\tfrac{\cdot}{R})$ where $\chi$ is a smooth cutoff 
	equal 
	to $1$ on the ball $|x| \le 1$ and supported in $|x| \le 2$. Then
	\begin{alignat*}{4}
	\partial a_R &= O(1), &\quad \Delta a_R &=  \Bigl(\frac{2}{\langle x 
	\rangle}  + \frac{1}{\langle x 
		\rangle^3} \Bigr)\mr{1}_{\{ |x| \le R\}} + O( 
	R^{-1} \mr{1}_{ \{|x| \sim R\} } ) \\
	\partial^2 a_R &= \partial^2 a \mr{1}_{\{|x| 
		\le 
		R\}}+ O( R^{-1} \mr{1}_{ \{ |x| \sim R \} } ), &\quad \Delta^2 
		a_R &= 
	-\frac{15}{\langle x \rangle^7}\mr{1}_{\{ |x| \le R \}} + O( R^{-3} 
	\mr{1}_{ \{ 
		|x| \sim R \} }).
	\end{alignat*}
	
	Let $D = d + \Gamma$ denote the covariant derivative, where $\Gamma$ is 
	supported in 
	the unit ball and $\| \Gamma\|_{C^2} = O(\varepsilon)$. It follows that if 
	$\varepsilon$ is sufficiently small, the above formulas continue to hold 
	with 
	the partial derivatives replaced by the covariant derivative $D$ and 
	$\Delta$ 
	by the metric Laplacian 
	$\Delta_g$. Applying the Morawetz identity~\eqref{ch4:e:morawetz} with 
	action 
	$M(t) 
	= \int a_R(x) |u(t,x)|^2 \, dg$, we obtain $|\partial_t M| \le c R \| 
	\nabla 
	u\|_{L^2}^2$ and
	\begin{align*}
	\int_{|x| \le R} \frac{ |u|^6}{ \langle x \rangle} \, dx &\le\partial_t^2 
	M + cR^{-3} \int_{|x| \sim R} |u|^2 \, dx + cR^{-1} \int_{|x| \sim R} 
	|\nabla u|^2 + |u|^6 \, dx\\
	&\le \partial_t^2 M + c R^{-1} E(u).
	\end{align*}
	Setting $R = |I|^{1/2}$ and integrating in time, we obtain
	\begin{align*}
	\int_{I} \int_{|x| \le |I|^{1/2}} \frac{ |u|^6 }{ \langle x \rangle } \, dx 
	dt 
	\le \sup_{t} 2|\partial_t M| + c|I| R^{-1} E(u) \lesssim |I|^{1/2} E(u).
	\end{align*}
\end{proof}

By Corollary~\ref{ch4:c:soliton-like}, if there is a finite energy solution 
to~\eqref{ch4:e:main_eq} that failed to scatter, then there exists a nonzero 
almost-periodic solution $u_c$, i.e. which remains in a precompact subset of 
$\dot{H}^1$.

\begin{cor}
	If $\|g-\delta\|_{C^3} \le \varepsilon$, the equation~\eqref{ch4:e:main_eq} 
	does 
	not 
	admit nonzero almost-periodic solutions. Hence, all finite-energy solutions 
	to~\eqref{ch4:e:main_eq} scatter. 
\end{cor}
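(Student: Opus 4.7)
The plan is to argue by contradiction. Suppose $u_c$ is a nonzero almost-periodic solution furnished by Corollary~\ref{ch4:c:soliton-like}; so the orbit $K := \overline{\{u_c(t) : t \in \mathbb{R}\}}$ is a compact subset of $\dot{H}^1$. Since energy is conserved and $E(u_c) > 0$, we have $0 \notin K$, and by Sobolev embedding the map $u \mapsto \|u\|_{L^6}^6$ is continuous on $\dot{H}^1$, hence attains a positive minimum $c_0 > 0$ on the compact set $K$. Thus $\|u_c(t)\|_{L^6}^6 \ge c_0$ for all $t \in \mathbb{R}$.

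Next I would upgrade precompactness to spatial tightness: since $\dot{H}^1 \hookrightarrow L^6$ is continuous, $K$ is also compact in $L^6$, and any compact subset of $L^6$ is tight (cover $K$ by finitely many small $L^6$-balls, truncate each center outside a large ball). Consequently, there exists $R_0 > 0$ such that
\begin{align*}
\int_{|x| > R_0} |u_c(t, x)|^6 \, dx \le \tfrac{c_0}{2} \quad \text{for all } t \in \mathbb{R},
\end{align*}
and combining with the lower bound on the total $L^6$ norm gives the uniform concentration
\begin{align*}
\int_{|x| \le R_0} |u_c(t, x)|^6 \, dx \ge \tfrac{c_0}{2} \quad \text{for all } t \in \mathbb{R}.
\end{align*}

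Now I apply Proposition~\ref{ch4:p:bourgain_morawetz} on $I = [0, T]$ for any $T \ge R_0^2$. On the left side, since $|I|^{1/2} \ge R_0$, we may restrict the spatial integral to $|x| \le R_0$ and bound $\langle x \rangle^{-1} \ge \langle R_0\rangle^{-1}$ to obtain
\begin{align*}
\int_0^T \!\! \int_{|x| \le |I|^{1/2}} \frac{|u_c|^6}{\langle x \rangle} \, dx \, dt \ge \frac{1}{\langle R_0 \rangle} \int_0^T \!\! \int_{|x| \le R_0} |u_c|^6 \, dx \, dt \ge \frac{c_0}{2 \langle R_0 \rangle} \, T.
\end{align*}
On the other hand the right side of the Bourgain-Morawetz estimate is bounded by $c \, T^{1/2} E(u_c)$. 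Letting $T \to \infty$ forces
\begin{align*}
\frac{c_0}{2 \langle R_0 \rangle} \, T \le c \, T^{1/2} E(u_c),
\end{align*}
which is impossible. Hence no nonzero almost-periodic solution exists. Combining this with Corollary~\ref{ch4:c:soliton-like} gives $E_c = \infty$, i.e. all finite energy solutions of~\eqref{ch4:e:main_eq} have finite global $Z$-norm, and scatter by standard arguments. The only nontrivial ingredient is the Morawetz inequality of Proposition~\ref{ch4:p:bourgain_morawetz} — the rest of the argument is a soft deduction from almost-periodicity, and no further analytic work is required.
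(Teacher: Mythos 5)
Your argument is correct and follows the same route as the paper: use precompactness of the orbit in $\dot{H}^1$ (and Sobolev embedding into $L^6$) to extract a uniform lower bound on the localized $L^6$ mass, then integrate the Bourgain--Morawetz inequality of Proposition~\ref{ch4:p:bourgain_morawetz} over growing intervals to contradict the $|I|^{1/2}$ growth rate. The only cosmetic difference is that you phrase the spatial concentration via a positive minimum of $\|\cdot\|_{L^6}^6$ on the compact orbit closure together with $L^6$-tightness, whereas the paper derives the same lower bound by a direct contradiction with sequences $R_n \to \infty$; the two are logically equivalent.
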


\begin{proof}
	Suppose $0 \ne u_c$ is almost-periodic. Then
	there exists $\eta > 0$ and a radius $R$ such that 
	\[
	\|u_c(t) \|_{L^6 ( \{ 
		|x| 
		\le R\} )} \ge \eta \text{ for all } t.
	\]
	For if not, there would exist radii $R_n \to 
	\infty$ and times $t_n$ such that $\| u_c(t_n)\|_{L^6 ( \{ |x| \le R_n 
		\}) } \to 0$. By compactness, it follows that some subsequence of 
	$u_c(t_n)$ 
	converges in $\dot{H}^1$ to $0$. But this yields the contradiction that 
	$E(u_c) = 0$.
	
	We now apply Proposition~\ref{ch4:p:bourgain_morawetz} on time intervals 
	$I$ 
	with 
	$|I|^{1/2} > 
	R$, and deduce
	\begin{align*}
	\eta |I| R^{-1} \le \int_I \int_{|x| \le |I|^{1/2}} \frac{|u_c|^6}{\langle 
	x 
		\rangle } 
	\, dx dt \lesssim |I|^{1/2} E(u_c).
	\end{align*}
	But this yields a contradiction for $I$ sufficiently large.
\end{proof}

\section{Appendix: A semiclassical $L^1 \to L^\infty$ estimate}
\label{s:appendix}

We demonstrate how the techniques from the proof of Proposition~\ref{ch4:p:extinction} yield a refinement of the
Burq-Gerard-Tzvetkov dispersive estimate~\cite{BurqGerardTzvetkov2004}.

\begin{prop}
	Let $g$ be a smooth metric on $\mf{R}^d$ with all derivatives bounded, set 
	$a(x, \xi) = g^{jk} \xi_j 
	\xi_k$ and denote by 
	\begin{align*}
	A(h) = a^w(x, hD) = (2\pi h)^{-d} \int_{\mf{R}^d} e^{\frac{i(x-y)\xi}{h}} a 
	\left( \frac{x+y}{2}, \xi \right) \, d\xi
	\end{align*}
	its semiclassical Weyl quantization. Let $\chi \in C^\infty_0( \mf{R}^d)$ 
	be a frequency cutoff. Then, 
	\begin{itemize}
	\item [(1)] 
	There exists a constant $c > 0$, depending on $g$,
	\begin{align*}
	\| e^{-\frac{it A(h)}{h}} \chi(hD) \|_{L^1(\mf{R}^d) \to L^\infty( 
	\mf{R}^d)} \lesssim |ht|^{-d/2} \text{ for all } |t| \le c.
	\end{align*}
	
	\item [(2)] For each $T > 0$, we have 
	\begin{align*}
		\| e^{-\frac{it A(h)}{h}} \chi(hD)  \|_{L^1(\mf{R}^d) \to 
		L^\infty( \mf{R}^d)} \lesssim_T h^{-d + \frac{1}{2}} \text{ for all } c 
		\le |t| \le T.
	\end{align*}
	\end{itemize}
\end{prop}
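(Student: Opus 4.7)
Proof plan. Part (1) is precisely the semiclassical dispersive estimate of Burq--Gerard--Tzvetkov~\cite{BurqGerardTzvetkov2004}, established there via a WKB parametrix and stationary phase and valid on time scales $|ht| \lesssim h$, i.e.\ for $|t| \le c$. No new argument is required.

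For Part (2), the plan is to repeat the wavepacket/non-concentration analysis from the proof of Proposition~\ref{ch4:p:extinction}, this time applied to the integral kernel of $e^{-itA(h)/h}\chi(hD)$ rather than to a rescaled concentrating profile. Using the FBI reproducing formula $T_h^* T_h = I$, the kernel is
\begin{align*}
K_h(t, x, y) = \int \overline{\psi^h_{(x_0, \xi_0)}(y)} \bigl[e^{-itA(h)/h}\chi(hD) \psi^h_{(x_0, \xi_0)}\bigr](x) \, dx_0 \, d\xi_0,
\end{align*}
and the frequency cutoff $\chi(hD)$ effectively restricts $\xi_0$ to the bounded set $B_\xi = \opn{supp}(\chi)$ modulo $O(h^\infty)$ errors, just as in Section~\ref{ch4:s:extinction}.

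By Proposition~\ref{ch4:p:wavepacket_sc} (applied after the same pseudodifferential reduction used in the extinction lemma to pass from $A'(h)$ to $A(h)$ on frequency-localized data, which is valid on any bounded time window), one obtains the pointwise wavepacket bound
\begin{align*}
\bigl| \bigl[e^{-itA(h)/h}\chi(hD)\psi^h_{(x_0,\xi_0)}\bigr](x) \bigr| \lesssim_N h^{-3d/4} \bigl\langle (x - x^t(x_0, \xi_0))/h^{1/2} \bigr\rangle^{-N},
\end{align*}
locally uniformly in $t$, where $x^t(x_0, \xi_0)$ is the geodesic at time $t$ from $x_0$ with initial cotangent vector $\xi_0$. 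Combining this with the Gaussian estimate $|\psi^h_{(x_0, \xi_0)}(y)| \lesssim h^{-3d/4} e^{-|x_0-y|^2/(2h)}$, which concentrates $x_0$ in an $h^{1/2}$-neighborhood of $y$, and integrating out $x_0$ produces
\begin{align*}
|K_h(t, x, y)| \lesssim h^{-d} \int_{B_\xi} \bigl\langle (x - x^t(y, \xi_0))/h^{1/2}\bigr\rangle^{-N} \, d\xi_0.
\end{align*}

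The geometric input is then the non-concentration Lemma~\ref{ch4:l:exp_preimage_estimate} and its remark: using the homogeneity $x^t(y, \xi_0) = x^1(y, t\xi_0)$ and changing variable $\eta = t\xi_0$, the lemma (applied to $\eta$ in the bounded set $tB_\xi$) yields
\begin{align*}
m\bigl( \{ \xi_0 \in B_\xi : |x^t(y, \xi_0) - x| \le r \} \bigr) \lesssim_{T} t^{-d} r \lesssim_{c, T} r
\end{align*}
uniformly for $c \le |t| \le T$. A dyadic decomposition in the integrand then gives the gain $\int_{B_\xi} \langle \cdots \rangle^{-N} d\xi_0 \lesssim h^{1/2}$, whence $|K_h(t, x, y)| \lesssim_T h^{-d + 1/2}$, as required.

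The main obstacle is the transfer of the wavepacket evolution bound from $A'(h)$ to $A(h)$ in the pointwise sense needed for an $L^1 \to L^\infty$ estimate (rather than the $\dot{H}^1$-norm sense used in the body of the paper); this is handled by a standard pseudodifferential argument exploiting that the two symbols agree on the support of the relevant frequency cutoffs. A secondary point is verifying that the constant in Lemma~\ref{ch4:l:exp_preimage_estimate} is uniform in the basepoint $y$ and target $x$ rather than merely locally uniform, which follows from the standing hypothesis that $g$ has uniformly bounded derivatives.
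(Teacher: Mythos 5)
Your proposal is correct and follows essentially the same strategy as the paper's appendix proof: FBI wavepacket decomposition, the propagation estimate of Proposition~\ref{ch4:p:wavepacket_sc}, and the geodesic non-concentration estimate of Lemma~\ref{ch4:l:exp_preimage_estimate} together with its remark, combined with a dyadic decomposition to extract the $h^{1/2}$ gain. The only bookkeeping difference is that you bound the integral kernel directly via $T_h^*T_h = \operatorname{Id}$, whereas the paper localizes the data to a minimum-uncertainty box with a cutoff $\eta(h^{-1}X)$ and recovers the full $L^1 \to L^\infty$ bound by translation invariance and a finite-overlap partition of unity (which also disposes of the uniformity concern you flag), and the paper handles the $A(h) \to A'(h)$ transfer by showing the exact solution satisfies the truncated equation with an $O(h^\infty)$ remainder, then invoking Duhamel plus Sobolev embedding rather than transferring the pointwise wavepacket bound directly as you propose.
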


\begin{rmk}
 If $g$ is sufficiently flat at infinity (i.e. is a scattering metric), the 
second part follows immediately from the Hassell-Wunsch 
parametrix~\cite{HassellWunsch2005}, which also shows that the exponent $-d + 
\tfr{1}{2} = -\tfr{d}{2} - \tfr{d-1}{2}$ is the best possible in general (the 
$d-1$ in the second fraction is the maximum number of linearly independent 
Jacobi fields along a geodesic that vanish at both endpoints).
\end{rmk}

\begin{proof}
	
	Only the second part is new, as the first part follows essentially from 
the WKB analysis of~\cite{BurqGerardTzvetkov2004}. We use the basic argument for Proposition~\ref{ch4:p:extinction} but do the accounting more carefully. That 
	is, we frequency-localize, decompose into wavepackets, and track their evolution along the geodesic flow.
	
	By introducing a spatial cutoff $\eta(h^{-1}X)$, where $\eta \in 
	C^\infty_0( \mf{R}^d)$, we further assume the initial data $u_h 
(0) = 
	\chi(hD) 
	\eta(h^{-1}X) \phi$ is localized to a minimum uncertainty box in phase 
	space. This will be convenient for the wavepacket analysis below.
	As the domain is $L^1$, the full bound may be recovered via a 
	partition of unity and the triangle inequality (note that the hypotheses 
	are translation-invariant). 
	
	Write $u_h = e^{-\frac{itA(h)}{h}}  \chi(hD) \eta(X) \phi$, where by 
	linearity we normalize $\|\phi\|_{L^1} = 1$; thus $u_h$ 
	solves the equation 
	\begin{align*}
	[hD_t + A(h)] u_h = 0, \ u_h(0) = \chi(hD) \eta(h^{-1} X) \phi.
	\end{align*}
	Select $\chi_1, \chi_2 \in C^\infty_0(\mf{R}^d \setminus \{0\})$ such that 
	$\chi \prec \chi_1 \prec \chi_2$, and define the localized operator $A'(h) 
	= (\chi_2 a)^w(x, hD)$. By the semiclassical functional calculus
	$\| [1 - \chi_1(hD) ] u_h\|_{H^s} \le O(h^\infty) \|u_h \|_{L^2}$ for 
	any $s > 0$, and we deduce that
	\begin{align*}
	[hD_t + A'(h)] u_h = r_h
	\end{align*}
	where $\|r_h\|_{H^s}  \le O(h^\infty) \| u_h\|_{L^2} \le O(h^\infty) \| 
	\phi\|_{L^1}$. By the Duhamel formula and Sobolev embedding, the remainder 
	may safely be ignored, therefore we shall just study the 
	free evolution under $A'(h)$.
	
	To make the notation less cumbersome, write $\phi_h = \chi(hD) \eta( h^{-1} 
	X) \phi$. From the 
	convolution representation of $\chi(hD)$ (and recalling the normalization 
	of $\phi$), we have the 
	pointwise bound
	\begin{align*}
	|\phi_h(x) | \lesssim_N h^{-d} \left\langle \frac{x}{h} \right\rangle^{-N} .
	\end{align*}
	Let 
	\begin{align*}
	T_h \phi_h (x, \xi) = c_d h^{-\frac{3d}{4}} \int e^{\frac{i \xi(x-y)}{h}} 
	e^{-\frac{(x-y)^2}{2h}} \phi_h(y) \, dy = c_d' h^{-\frac{5d}{4}} \int 
	e^{\frac{ix\eta}{h}} e^{- \frac{(\eta - \xi)^2}{2h}} \widehat{\phi_h} \left ( 
	\frac{\eta}{h} \right ) \, d\eta
	\end{align*}
	denote the FBI transform of $\phi_h$. When $|x| \sim 2^k h^{1/2}$, we have
	\begin{align*}
	|T_h \phi_h| &\lesssim h^{-\frac{3d}{4}} \int_{|y| \le \frac{1}{2}|x|} 
	e^{-\frac{(x-y)^2}{2h}} h^{-d} 
	\left\langle \frac{y}{h} \right\rangle^{-N} \, dy + h^{-\frac{3d}{4}} \int_{ |y| \ge 
	\frac{1}{2} |x|} h^{-d} \left\langle \frac{y}{h} \right \rangle^{-N} \, dy\\
	&\lesssim h^{-\frac{3d}{4}} ( e^{-c 2^{2k}} + h^{100d} 2^{-100 d k}).
	\end{align*}
	Similarly, for $|\xi| \gg 1$,
	\begin{align*}
	|T_h \phi_h| &\lesssim h^{-\frac{5d}{4}} \int e^{-\frac{c |\xi|^2}{h}} 
	\chi(\eta) |\widehat{ \eta(h^{-1} X) \phi } (\eta)| \, d\eta\\
	&\lesssim h^{-\frac{5d}{4}} e^{-\frac{c|\xi|^2}{h}}.
	\end{align*}
	
	Choose $R > 0$ so that $\mr{supp} (\chi) \subset \{ |\xi| \le R/4\}$, 
and 
	partition phase space as $T^* \mf{R}^d = B' \cup \bigcup_{k \ge 0} B_k$, 
	where
	\begin{gather*}
	B' = \{ |\xi| > R\}\\
	B_0 = \{ |x| \le h^{1/2}, \ |\xi| \le R \}, \ B_k = \{ 2^{k-1} h^{1/2} < 
	|x| \le 2^{k} h^{1/2}, \ |\xi| \le R\}.
	\end{gather*}
	Decompose
	\begin{align*}
	\phi_h = \int_{B'} T_h \phi_h (x, \xi) \psi^h_{(x, \xi)} \, dx d\xi + 
	\sum_{k \ge 0} \int_{B_k} T_h \phi_h (x, \xi) \psi^h_{ x, \xi} \, dx d\xi,
	\end{align*}
	where $\psi_{x, \xi}^h (y) = 2^{-\frac{d}{2}} \pi^{-\frac{3d}{4}} 
	h^{-\frac{3d}{4}} e^{ \frac{i\xi (y-x)}{h}} e^{ -\frac{(y-x)^2}{2h}}$. By 
	the preceding bounds and Proposition~\ref{ch4:p:wavepacket_sc}, we estimate
	\begin{align*}
	\left| \int_{ B'} T_h \phi_h  e^{-\frac{it A'(h)}{h}} \psi_{x, \xi}^h  \, 
	dx 
	d\xi \right| &\lesssim \sum_k \int_{ |x| \sim 2^k h^{1/2}, \ |\xi| \ge R}  
	h^{-\frac{3d}{4}} h^{-d} (e^{-c 2^{2k}} + 2^{-50dk}) 
e^{-c\frac{|\xi|^2}{h}} 
	\, dx d\xi\\
	&= O(h^\infty).
	\end{align*}
	
	Next, let $0 < \alpha < \tfr{1}{2}$ be a small parameter, and estimate
        \begin{align*}
         \sum_{2^k > h^{-\alpha}} \left| \int_{B_k} T_h \phi_h 
e^{-\fr{itA'(h)}{h}} \psi_{x, \xi}^h \, dx d\xi \right| &\lesssim \sum_{2^k > 
h^{-\alpha}} h^{-\fr{3d}{2}} (e^{-c 2^{2k}} + h^{100d} 2^{-100dk} )\\
&\lesssim h^{10d},
        \end{align*}
so this contribution is acceptable. It remains to bound the terms 
	\begin{align*}
	 \left| \int_{B_k} T_h \phi_h 
e^{-\fr{itA'(h)}{h}} \psi_{x, \xi}^h (y) \, dx d\xi \right| &\lesssim 
\int_{B_k} h^{-\fr{3d}{4}} \left \langle \fr{y - x^t}{h^{1/2}} 
\right\rangle^{-100d} | T_h \phi_h (x, \xi)|,
	\end{align*}
	where $2^{k} h^{1/2} \le h^{1/2 - \alpha} < 1$. We bound the right side by
	\begin{align*}
	\sum_{j} \int_{B_{k, j}} 
	h^{-\frac{3d}{4}}  \Bigl\langle \frac{y - x^t}{h^{1/2}} 
\Bigr\rangle^{-100d} 
	|T_h 
	\phi_h(x, \xi)| \, d\xi dx \le \sum_{2^j \le h^{-\alpha}} + \sum_{2^j > h^{-\alpha}}.
	\end{align*}
	where $B_{k, j} = B_k \cap \{ (x, \xi): 
	2^{j-1} < |x^t - y| \le 2^j h^{1/2} \}$ and $B_{k, 0} = B_k \cap \{ (x, 
\xi): |x^t - y| \le h^{1/2} \}$. The second sum is negligible as before. For 
the first, we invoke the exponential map estimate of 
Lemma~\eqref{ch4:l:exp_preimage_estimate} to bound the $j$th member
by
\begin{align*}
 &h^{-\fr{3d}{4}}(2^{-100jd}) h^{-\fr{3d}{4}} (e^{-c2^{2k}} + h^{100d} 
2^{-100dk}) (2^k h^{1/2})^d (2^j h^{1/2}) \\
&\le h^{-d + \fr{1}{2}} 2^{-99jd} (e^{-c2^{2k}} + h^{100d} 2^{-100dk} ).
\end{align*}
The proof is finished by summing over $j$ and $k$.
\end{proof}

	\bibliographystyle{amsalpha}
    \bibliography{../bibliography}

	\end{document}